
\documentclass[article,ij4uq]{ij4uq}              


\usepackage[hang]{footmisc}
\usepackage[ruled]{algorithm2e}
\usepackage{algorithmic}
\usepackage{subfigure}

\usepackage{amsfonts}
\usepackage{amsmath}
\usepackage{amssymb}
\usepackage{graphicx}
\usepackage{mathrsfs}
\usepackage{epsfig}
\usepackage{color}
\usepackage[normalem]{ulem}
\usepackage{bm}
\usepackage{pifont}

\def\bE{{\mathbb{E}}}
\def\cJ{{\mathcal{J}}}
\def\cR{{\mathcal{R}}}
\def\cU{{\mathcal{U}}}
\def\cD{{\mathcal{D}}}

\def\Hep{{\mathrm{H}}} 
\def\hep{{\mathrm{h}}} 

\def\bx{{\boldsymbol{x}}}
\def\by{{\boldsymbol{y}}}

\DeclareMathOperator*{\var}{Var}

\def\mfku{{\mathfrak{u}}}

\newcommand{\dotW}{\dot{W}}

\newcommand{\ibxi}{\boldsymbol{\xi}}

\newcommand{\bbF}{{\mathbb{F}}}
\newcommand{\bbR}{\mathbb{R}}
\newcommand{\bbN}{\mathbb{N}}
\def\cF{{\mathcal{F}}}
\newcommand{\bF}{\boldsymbol{F}}
\newcommand{\bc}{\boldsymbol{c}}
\newcommand{\bb}{\boldsymbol{b}}
\newcommand{\bu}{\mathbf{u}}
\newcommand{\tu}{\tilde{u}}
\newcommand{\wprod}{\diamond}
\newcommand{\sI}{\mathsf{I} }
\newcommand{\sII}{\mathsf{II} }
\newcommand{\tmc}{\textrm{mc}}
\newcommand{\iba}{\boldsymbol{\alpha}}
\newcommand{\ibb}{\boldsymbol{\beta}}
\newcommand{\ibg}{\boldsymbol{\gamma}}

\newcommand{\ibk}{\boldsymbol{\kappa}}

\newcommand{\Wphi}{{\sigma G}}
\newcommand{\phivar}{{\sigma^2}}

\def\revdraft{0}
\if\revdraft 1

\newcommand{\Yu}[1]{{\color{blue}#1}}

\newcommand{\WanRev}[2]{{\color{blue}{\sout{#1}}#2}}

\else

\newcommand{\Yu}[1]{{#1}}

\newcommand{\WanRev}[2]{{#2}}
\fi

\newcommand{\remove}[1]{}

\newtheorem{lemma}[theorem]{Lemma}

\setlength{\footnotemargin}{0in}
\frenchspacing
\fancypagestyle{plain}{%
	\fancyhf{}
	\fancyhead[R]{\small {\it International Journal for Uncertainty Quantification}, x(x): \thepage--\pageref{LastPage} (\myyear\today)}
	\fancyfoot[R]{\small\bf\thepage }
	\fancyfoot[L]{\fottitle}
}
\renewcommand{\myyear}{2019}
\renewcommand{\today}{}

\begin{document}
	
	\volume{Volume x, Issue x, \myyear\today}
	\title{Numerical approximation of elliptic problems with log-normal 
		random coefficients}
	\titlehead{Numerical approximation of elliptic problems with log-normal 
		random coefficients}
	\authorhead{X. Wan \& H. Yu}
	\author[1]{Xiaoliang Wan}
	\author[2, 3]{Haijun Yu}
	\address[1]{Department of Mathematics\\
      Center for Computation and Technology\\
      Louisiana State University\\
      Baton Rouge, LA 70803\\
      Email: {xlwan@math.lsu.edu}}

    \address[2]{NCMIS \& LSEC, Institute of Computational
      Mathematics and Scientific/Engineering Computing,
      Academy of Mathematics and Systems Science, Beijing
      100190,
      China\\
      Email: {hyu@lsec.cc.ac.cn}}
	
	\address[3]{School of Mathematical Sciences, University
      of Chinese Academy of Sciences\\
      Beijing 100049, China}
	
	\dataO{mm/dd/yyyy}
	\dataF{mm/dd/yyyy}

\abstract{ In this work, we consider a non-standard
      preconditioning strategy for the numerical
      approximation of the classical elliptic equations with
      log-normal random coefficients. In \cite{Wan_model}, a
      Wick-type elliptic model was proposed by modeling the
      random flux through the Wick product. Due to the
      lower-triangular structure of the uncertainty
      propagator, this model can be approximated efficiently
      using the Wiener chaos expansion in the probability
      space. Such a Wick-type model provides, in general, a
      second-order approximation of the classical one in
      terms of the standard deviation of the underlying
      Gaussian process. Furthermore, when the correlation
      length of the underlying Gaussian process goes to
      infinity, the Wick-type model yields the same solution
      as the classical one. These observations imply that
      the Wick-type elliptic equation can provide an
      effective preconditioner for the classical random
      elliptic equation under appropriate conditions. We use
      the Wick-type elliptic model to accelerate the Monte
      Carlo method and the stochastic Galerkin finite
      element method. Numerical results are presented and
      discussed.  }


\keywords{Wiener chaos expansion; Wick product;
      Stochastic elliptic PDE; Uncertainty quantification;
      log-normal random coefficient}

\maketitle

\section{Introduction}
    
Numerical approximation of elliptic problems with log-normal
random coefficients has \WanRev{recently}{}  received a lot of
attention. We consider the following mathematical model
\begin{equation}\label{eqn:ellip_omega}
  \textrm{Model I:}\quad
  \left\{
    \begin{array}{rcll}
      -\nabla\cdot(a(\bx,\omega)\nabla u(\bx,\omega))&=&f(\bx),&\bx\in D,\\
      u(\bx,\omega)&=&0,&\bx\in\partial D,
    \end{array}
  \right.
\end{equation}
where $\ln a(\bx,\omega)$ is a second-order homogeneous
Gaussian random process, and the force term is assumed to be
deterministic for simplicity.  We call problem
\eqref{eqn:ellip_omega} model I in this paper.  Theoretical
difficulties of problem \eqref{eqn:ellip_omega} are mainly
related to the lack of uniform ellipticity, where the
Lax-Milgram lemma is not applicable. The existence and
uniqueness of the solution of problem
\eqref{eqn:ellip_omega} are usually established with respect
to a weighted norm \cite{LRW09,Sarkis09,Gittelson_MMMAS10}
or a weighted measure \cite{Starkloff11}, or by using the
Fernique theorem
\cite{Prato92,Charrier_SINUM12}. Considering the Wiener
chaos approach and Galerkin projection
\cite{Ghanem91,LRW09}, the difficulties of numerical
approximation of problem \eqref{eqn:ellip_omega} are
twofold: First, if we start from the theoretical study
\cite{Sarkis09,Starkloff11}, a different test space rather
than $L_2(\bbF;H_0^1(D))$ is required,,which may be not easy
to construct. \Yu{Here $\bbF:=\left(\Omega,\cF,P\right) $ is
  the probability space for $\omega$, detailed presentation
  of $\bbF$ is given in Section 2.} Second, if we choose
$L_2(\bbF;H_0^1(D))$ as the test space and use Wiener chaos
as the basis for the probability space, although no
divergence with respect to $L_2(\bbF;H_0^1(D))$ norm has
been numerically observed (the solution of problem
\eqref{eqn:ellip_omega} actually belongs to
$L_2(\bbF;H_0^1(D))$ \cite{Charrier_SINUM12}), the stiffness
matrix is full and dense.  In other words, an efficient
preconditioner is required. Study of elliptic problems with
other types of random coefficients can be found in
\cite{Babuska_SIAMJNA04,Schwab_CMAME05,Schwab_IMANUM07},
etc.

\Yu{ The elliptic equation with log-normal random
  coefficient has been studied by means of the perturbation
  technique (see, e.g., \cite{BN_SIAMJUQ14},
  \cite{BNK_CMAME16}), which has been also employed for
  other types of random coefficients (see, e.g.,
  \cite{CCDS_ESAIM13}). However, the perturbation method
  only works for small variability of random coefficient and
  low degree of the Taylor polynomial \cite{BN_SIAMJUQ14}.
 
  Another approach is to construct an auxiliary problem as
  some sort of preconditioner of the original problem,
  e.g. the idea of using a smoother version of the original
  problem (generated by a smoothing kernel) in a Monte Carlo
  control variate approach has been discussed by Nobile
  et. al.  \cite{NT_SPDE_2015}, \cite{NTTT_SGA2016}. Other
  known preconditioning skills include the traditional
  algebraic preconditioner \cite{PE_IMAJNA_2009},
  \cite{PU_SIAMJMAA2010} and the bi-fidelity method
  \cite{HFND_2017}.  }

\Yu{In this paper we take a new approach to construct an
  auxiliary problem used as a preconditioner of model I.} From
the modeling point of view, the randomness can be introduced
in different ways.  A typical strategy is to replace the
flux $a\nabla u$ as $a\wprod\nabla u$ with $\wprod$ being
the Wick product \cite{Holden96,Theting_SSR2000,Wan_PNAS09},
motivated by the observations that the Wick product is
consistent with Skorohod stochastic integral in a Hilbert
space and can smooth the irregularity induced by white
noise. Once the Wick product is adopted, the equations for
the coefficients of Wiener chaos expansion are decoupled and
can be solved one-by-one. Although this is a very nice
property for numerical computation, the original equation is
changed and the model difference becomes the main
concern. In \cite{Wan_model,Wan_model2}, a new Wick-type
model was proposed by modeling the flux as
$\left(a^{-1}\right)^{\wprod(-1)}\wprod\nabla u$:
\begin{equation}\label{eqn:ellip_Wick}
  \textrm{Model II:}
  \quad \left\{
    \begin{array}{rcll}
      -\nabla\cdot\left(\left(a^{-1}\right)^{\wprod(-1)}(\bx,\omega)\wprod\nabla 
      u(\bx,\omega)\right)&=&f(\bx),&\bx\in D,\\
      u(\bx,\omega)&=&0,&\bx\in\partial D,
    \end{array}\right.
\end{equation}
which we call model II in this paper.  In general, both
fluxes $a\wprod\nabla u$ and
$\left(a^{-1}\right)^{\wprod(-1)}\wprod\nabla u$ will
introduce a second order approximation of the solution of
model I in terms of the standard deviation ($\sigma<1$) of
the underlying Gaussian process. However, the latter choice
provides a much smaller difference.  Actually when the
correlation length of the underlying Gaussian process goes
to infinity, model II has the same solution as model I. In
addition, the uncertainty propagator of model II is also
lower-triangular, which can be solved efficiently. Another
way to approximate the flux $a\nabla u$ using the Wick
product is to employ the Mikulevicius-Rozovskii (M-R)
formula \cite{Boris_SNS}, which shows that the product of
two random variables, say $X$ and $Y$, has a Taylor-like
expansion
\begin{equation}
  XY=X\wprod Y+\sum_{n=1}^\infty\frac{\cD^n X\wprod\cD^n Y}{n!},
\end{equation} 
where $\cD$ indicates the Malliavin derivative
\cite{Nualart06}. It is seen that $X\wprod Y$ is the
lowest-order term in this expansion. We can include more
terms from the M-R formula to get a better approximation of
$a\nabla u$ \cite{Venturi_2013,Wan_model3}.  It is shown in
\cite{Wan_model3} that with respect to the truncation order
$Q$ of the Malliavin derivative and the standard deviation
of the underlying Gaussian process such a strategy provides
a difference of $\mathcal{O}(\sigma^{2(Q+1)})$ from the
solution of model I.  However, upon doing so, the
corresponding uncertainty propagator will be not
lower-triangular any more, although the coupling in the
upper-triangular part will be weak if the truncation order
in the M-R formula is relatively small.

In this work, we will explore the possibility to use model
II as a predictor to improve some algorithms for model I
since model II can be approximated efficiently and the
difference between models I and II can be very
small. Depending on the properties of the random
coefficient, we mainly consider the Monte Carlo method and
the Wiener chaos approach with Galerkin projection for model
I.

This paper is organized as follows. In section
\ref{sec:preliminary}, we define the Wiener chaos space and
the Wick product. Stochastic elliptic models are discussed
in section \ref{sec:models} and the corresponding
uncertainty propagators are given in section
\ref{sec:galerkin}. Numerical algorithms are proposed in
section \ref{sec:algorithm}. We present numerical results in
section \ref{sec:results}, followed by a summary section.

\section{Wiener chaos space and Wick product}
\label{sec:preliminary}
\Yu{ Since the underlying random {variables} of the model are
 {i.i.d.} Gaussian, whose corresponding stochastic orthogonal
  polynomials are Hermite. We first introduce basic
  properties of Hermite polynomials.

\subsection{Hermite polynomials}
 
The one-dimensional (probabilistic) Hermite polynomials
of degree $n$ are defined as
	\begin{equation}
	\Hep_n(\xi) := (-1)^n e^{\frac{\xi^2}{2}} \frac{d^n}{d\xi^n} e^{-\frac{\xi^2}{2}}.
	\end{equation}
	$\Hep_n(\xi)$ are orthogonal
	with respect to the weight
	$\frac{1}{\sqrt{2\pi}}e^{-\frac{\xi^2}{2}}$, in the sense
	\begin{equation}
	\int_{-\infty}^\infty \Hep_m(\xi)\Hep_n(\xi) 
	\frac{1}{\sqrt{2\pi}} e^{-\frac{\xi^2}{2}} d\xi
	= n! \delta_{nm}.
	\end{equation}
The values of Hermite polynomials can be evaluated using
the following three-term recurrence formula:
\begin{align*}
& \Hep_0(\xi)=1,\qquad \Hep_1(\xi)=\xi,\\
& \Hep_{n+1}(\xi)= \xi \Hep_n(\xi) - n \Hep_{n-1}(\xi),\quad n\ge 2.
\end{align*}
Hermite polynomials satisfy a very simple derivative
relation:
\begin{equation}
\Hep_n'(\xi)= n \Hep_{n-1}(\xi)\quad \forall\, n\ge 0.
\end{equation}
We list below in Lemma \ref{lem:He_prop} several properties
of Hermite polynomials, which will be used later.
\begin{lemma}\label{lem:He_prop}
  For one-dimensional Hermite polynomials, the following
  properties hold
  \begin{align}
    &
      \exp\left(s\xi-\frac{1}{2}s^2\right)
      = \sum_{i=0}^\infty\frac{s^i}{i!}\Hep_i(\xi),
      \label{eqn:genfunc} \\ 
    &
      \Hep_n(\xi+s)
      = \sum_{i=0}^n\binom{n}{i}s^{n-i}\Hep_{i}(\xi),
      \label{eqn:shift_hermite}\\
    &
      \Hep_i(\xi)\Hep_j(\xi)
      = \sum_{k\leq i\wedge j}\chi(i,j,k)\Hep_{i+j-2k}(\xi).
      \label{eqn:hermite_multiply_expansion}
  \end{align}
  where $s\in\bbR$, \Yu{$i\wedge j := \min\{i,j\}$} and
  \[
    \chi(i,j,k)=\frac{i!j!}{k!(i-k)!(j-k)!}.
  \]
\end{lemma}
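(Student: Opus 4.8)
The plan is to establish the generating-function identity \eqref{eqn:genfunc} first, directly from the Rodrigues-type definition, and then to obtain both the shift formula \eqref{eqn:shift_hermite} and the linearization (product) formula \eqref{eqn:hermite_multiply_expansion} as formal consequences of it by completing the square and extracting power-series coefficients. This keeps the argument uniform: a single analytic identity does all the work, and the remaining two statements reduce to algebraic bookkeeping in the coefficients of a double power series.

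For \eqref{eqn:genfunc} I would set $G(s,\xi):=\exp\!\left(s\xi-\tfrac12 s^2\right)$ and complete the square to write $s\xi-\tfrac12 s^2 = \tfrac{\xi^2}{2}-\tfrac12(\xi-s)^2$, so that $G(s,\xi)=e^{\xi^2/2}\,e^{-(\xi-s)^2/2}$. Since the factor $e^{-(\xi-s)^2/2}$ depends only on $\xi-s$, its $s$-derivatives satisfy $\partial_s^i\, e^{-(\xi-s)^2/2}=(-1)^i\partial_\xi^i\, e^{-(\xi-s)^2/2}$, and evaluating the Taylor expansion of $G$ in $s$ at $s=0$ gives $\partial_s^i G\big|_{s=0}=(-1)^i e^{\xi^2/2}\frac{d^i}{d\xi^i}e^{-\xi^2/2}=\Hep_i(\xi)$ by the definition of $\Hep_i$. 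Thus $G(s,\xi)=\sum_{i\ge0}\frac{s^i}{i!}\Hep_i(\xi)$, which is \eqref{eqn:genfunc}. (One must only check convergence of the Taylor series, which is immediate since $G$ is entire in $s$.)

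To prove \eqref{eqn:shift_hermite} I would apply \eqref{eqn:genfunc} at the shifted argument $\xi+s$, using a fresh series variable $r$: $\sum_{n}\frac{r^n}{n!}\Hep_n(\xi+s)=\exp\!\left(r(\xi+s)-\tfrac12 r^2\right)=e^{rs}\exp\!\left(r\xi-\tfrac12 r^2\right)=\sum_{j}\frac{(rs)^j}{j!}\sum_{m}\frac{r^m}{m!}\Hep_m(\xi)$. Collecting the coefficient of $r^n$ on both sides (so $j+m=n$) and multiplying by $n!$ yields $\Hep_n(\xi+s)=\sum_{m=0}^n\binom{n}{m}s^{n-m}\Hep_m(\xi)$, which is \eqref{eqn:shift_hermite} after renaming $m\mapsto i$.

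The linearization formula \eqref{eqn:hermite_multiply_expansion} is where the real work lies, and I expect the index bookkeeping to be the main obstacle. I would multiply two copies of \eqref{eqn:genfunc} in independent variables $s,t$ and again complete the square: $\left(\sum_i\frac{s^i}{i!}\Hep_i\right)\!\left(\sum_j\frac{t^j}{j!}\Hep_j\right)=\exp\!\left((s+t)\xi-\tfrac{s^2+t^2}{2}\right)=e^{st}\exp\!\left((s+t)\xi-\tfrac{(s+t)^2}{2}\right)$, using $-\tfrac{s^2+t^2}{2}=-\tfrac{(s+t)^2}{2}+st$. Expanding the right-hand factor by \eqref{eqn:genfunc} in the variable $s+t$, together with $e^{st}=\sum_k\frac{(st)^k}{k!}$ and the binomial theorem for $(s+t)^n$, I would then extract the coefficient of $s^i t^j$: the constraints force $n=i+j-2k$ and the binomial index $i-k$, with $k$ ranging over $0\le k\le i\wedge j$ to keep all factorials nonnegative. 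Matching this coefficient against $\frac{1}{i!\,j!}\Hep_i\Hep_j$ from the left-hand side and clearing the factor $i!\,j!$ produces exactly $\Hep_i\Hep_j=\sum_{k\le i\wedge j}\chi(i,j,k)\Hep_{i+j-2k}$ with $\chi(i,j,k)=\frac{i!\,j!}{k!(i-k)!(j-k)!}$, as claimed. The delicate point is verifying that the three simultaneous constraints on the exponents of $s$ and $t$ reduce to the single summation index $k$ with the stated range, and that the $(i+j-2k)!$ factors cancel cleanly; once that is done, the coefficient identity is forced.
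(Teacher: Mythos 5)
Your proposal is correct in all three parts. Note, however, that the paper itself gives \emph{no} proof of this lemma: it is introduced with ``We list below \ldots several properties of Hermite polynomials'' and treated as standard background (the kind of facts found in the cited white-noise literature), so there is no paper argument to compare against step by step. Your uniform generating-function route is the classical one and it checks out completely: for \eqref{eqn:genfunc}, completing the square and using $\partial_s^i e^{-(\xi-s)^2/2}=(-1)^i\partial_\xi^i e^{-(\xi-s)^2/2}$ at $s=0$ reproduces exactly the Rodrigues definition $\Hep_i(\xi)=(-1)^i e^{\xi^2/2}\frac{d^i}{d\xi^i}e^{-\xi^2/2}$ used by the paper, and entirety of $s\mapsto e^{s\xi-s^2/2}$ justifies the expansion; for \eqref{eqn:shift_hermite}, matching the coefficient of $r^n$ in $e^{rs}\cdot e^{r\xi-r^2/2}$ gives the binomial sum directly; and for \eqref{eqn:hermite_multiply_expansion}, the identity $-\tfrac{s^2+t^2}{2}=-\tfrac{(s+t)^2}{2}+st$ together with extraction of the $s^it^j$ coefficient forces $m=i-k$, $n=i+j-2k$, $0\le k\le i\wedge j$, and the cancellation $\frac{1}{(i+j-2k)!}\binom{i+j-2k}{i-k}=\frac{1}{(i-k)!(j-k)!}$ yields precisely $\chi(i,j,k)=\frac{i!\,j!}{k!(i-k)!(j-k)!}$. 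The only stylistic remark is that your proof of \eqref{eqn:shift_hermite} and \eqref{eqn:hermite_multiply_expansion} could equally well be phrased as a formal power-series computation (coefficient comparison needs no analytic convergence once \eqref{eqn:genfunc} is known to hold pointwise), but as written the argument is complete and supplies what the paper leaves implicit.
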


\subsection{Wick product}

Now we list the definition and some basic properties of Wick
product, which can be found in existing literature
(e.g. \cite{Holden96}, \cite{HuYan2009}).

The Wick product of a set of random variables with finite
moments is defined recursively as follows:
\[
\left\langle \emptyset\right\rangle =1,\qquad
\frac{\partial\left\langle X_{1},\ldots,X_{k}\right\rangle
}{\partial X_{i}}=\left\langle X_{1},\ldots,X_{i-1},
X_{i+1},\ldots,X_{k}\right\rangle, \quad k\ge 1, 
\]
together with the constraint that the average is zero
\[
\bE\left\langle X_{1},\ldots,X_{k}\right\rangle =0,\quad k\ge 1.
\]
It follows that
\[
\langle X\rangle =X-\bE[X],\quad \langle
X,Y\rangle =XY-\bE[Y]X-\bE[X]Y+2\bE[X]\bE[Y]-\bE[XY].
\]
If $X,Y$ are independent, from about formula, we know
\[
\left\langle X,Y\right\rangle =\left\langle X\right\rangle
\left\langle Y\right\rangle .
\]
On the other hand, if $Y=X$, we get
\[
\left\langle X,X\right\rangle
=X^{2}-2\bE[X]X+2\bE[X]^{2}-\bE[X^{2}].
\]
Define $X\wprod Y:= \langle X, Y \rangle$ and
\[
P_{n}(X) := X^{\wprod n} =\underbrace{\left\langle
	X,\ldots,X\right\rangle }_{n\text{ times}},
\]
then $P_{n}'(x)=nP_{n-1}(x)$. 

Wick product is closely related to Hermite polynomials. If
$\xi$ is a normally distributed variable with variance $1$,
then
\begin{equation}\label{eq:Wick_powerHermite}
\xi^{\wprod n}=\Hep_{n}(\xi).
\end{equation}
and
\begin{equation}\label{eq:Wick_Hermite}
\Hep_{n}(\xi)\wprod \Hep_{m}(\xi)=\Hep_{n+m}(\xi).
\end{equation}

Using Taylor series, one can define the exponential function
of Wick product as
\begin{equation}\label{eq:wick_exponential}
e^{\wprod X}:=\sum_{n=0}^{\infty}\frac{1}{n!}X^{\wprod n}.
\end{equation}
{For a} normally distributed variable $\xi$, it can be checked
that \cite{Holden96}
\begin{equation}\label{eqn:exp_diamond}
e^{\wprod\left[\sigma\xi\right]}=e^{\sigma\xi-
\sigma^2/2},
\end{equation}
\begin{equation}\label{eqn:exp_dia_inverse}
e^{\wprod\left[\sigma\xi\right]}\wprod e^{\wprod \left[-\sigma\xi\right]}
=1,
\end{equation} 
and the following statistics hold
\begin{equation}
\bE\left[e^{\wprod\left[\sigma\xi\right]}\right]=1,\quad
\var\left[e^{\wprod\left[\sigma\xi\right]}\right]=
e^{\sigma^2}-1.
\end{equation}
}

\subsection{Wiener chaos space}

We define $\bbF:=\left(\Omega,\cF,P\right) $
as a complete probability space, where $\cF$ is the
$\sigma$-algebra generated by the countably many
i.i.d. Gaussian random variables
$\left\{\xi_{k}\right\}_{k\geq1}$. Define
$\ibxi:=(\xi_1,\xi_2,\ldots)$.  Let $\cJ$ be the collection
of multi-indices $\iba$ with
$\iba=(\alpha_{1},\alpha_{2}, \ldots)$ so that
$\alpha_{k}\in\bbN_0$ and
$|\iba|:=\sum_{k\geq1}\alpha_{k}<\infty$. For
$\iba,\ibb\in\cJ$, we define
\[
\iba+\ibb=(\alpha_{1}+\beta_{1},\alpha_{2}+\beta_{2},\ldots),\quad
\iba!=\prod_{k\geq 1}\alpha_{k}!,\quad
\binom{\iba} {\ibb}=\prod_{k\geq
	1}\binom{\alpha_k}{\beta_k}.
\]
We use $(\Yu{\bm 0})$ to denote the multi-index with
all zero entries: $(\Yu{\bm{0}})_k=0$ for all
$k$. Define the collection of random variables
\Yu{ $\Xi$ as follows:
	\begin{equation}\label{eq:gPCset}
	\Xi := \{\hep_{\iba },\iba\in\cJ\},
	\quad
	\hep_{\iba}(\ibxi) := \prod_{k\geq1}
	\frac{1}{\sqrt{\alpha_{k}!}} \Hep_{{\alpha}_{k}}(\xi_{k}),
	\end{equation} 
  } where $\Hep_{n}(\xi)$ are the one-dimensional
  (probabilistic) Hermite polynomials.  \Yu{ For convenience,
    we also define
\begin{equation}
  \Hep_{\iba}(\ibxi) := \prod_{k\geq1}
  \Hep_{{\alpha}_{k}}(\xi_{k}).
\end{equation}
For any fixed $k$-dimensional i.i.d. Gaussian random
variable $\bm \xi$, the following relations hold
\begin{equation}
  \bE[\Hep_{\iba}(\ibxi)\Hep_{\ibb}(\ibxi)]=
  \delta_{\iba\ibb}\iba!,\quad
  \bE[\hep_{\iba}(\ibxi)\hep_{\ibb}(\ibxi)]=
  \delta_{\iba\ibb}.
\end{equation}
The set $\Xi$ {forms} an orthonormal basis for $L_{2}(\bbF)$
\cite{Cameron_AM47}, that is: if $\eta\in L_{2}(\bbF)$, then
\begin{equation}
  \eta=\sum_{\iba\in\cJ}\eta_{\iba} \hep_{\iba},\quad
  \eta_{\iba}=\bE[\eta\hep_{\iba}]
\end{equation}
and
\begin{equation}
  \bE[\eta^{2}]=\sum_{\iba\in\cJ}\eta_{\iba}^{2}.
\end{equation}
The Wick product of multi-dimensional stochastic Hermite
polynomials are:}
\begin{equation}\label{eq:Hermite_Wickprod}
  \Hep_{\iba}(\ibxi)\wprod\Hep_{\ibb}(\ibxi)
  =\Hep_{\iba+\ibb}(\ibxi),
  \qquad
  \hep_{\iba}(\ibxi)\wprod\hep_{\ibb}(\ibxi)
  =\sqrt{\frac{(\iba+\ibb)!}{\iba!\ibb!}}\hep_{\iba+\ibb}(\ibxi).
\end{equation}	

\Yu{Note that if we consider the expansion of
  $\Hep_{\iba}(\ibxi)\Hep_{\ibb}(\ibxi)$ using the base set
  $\Xi$}, it is obvious that there exist low-order terms
\Yu{in addition to} $\Hep_{\iba+\ibb}(\ibxi)$; however, in
the definition of Wick product, all these low-order terms
are removed, \Yu{cf. equation
  \eqref{eqn:hermite_multiply_expansion} and equation
  \eqref{eq:Hermite_Wickprod}}. Such a difference of the
Wick product from the regular multiplication stems from the
fact that the Wick product should be interpreted from the
viewpoint of stochastic integral. The correspondence between
the Wick product and the Ito-Skorokhod integral can be found
in \cite{Holden96,Nualart06, Rozovskii_SIMA09,Wan_PNAS09}.

For the numerical approximation, the number of Gaussian
random variables and the polynomial order need to be
truncated. We define
\begin{equation}
  \cJ_{M,p}=\{\iba|\iba=(\alpha_1,\ldots,\alpha_M),\,|\iba|\leq p\},
\end{equation}
where $p\in\bbN_0$ is the maximum total degree. (To reduce
the number of stochastic bases, one can also consider
\Yu{the sparse {grids} or sparse spectral Galerkin method , see
  e.g. \cite{ShenY10,ShenY12, CCDS_ESAIM13,SWY_JCAM2014,
    NTTT_SGA2016}}, \WanRev{}{where} the overall procedure is similar.)
Correspondingly, $\ibxi$ is split into two parts
\[
  \ibxi=\ibxi_1\oplus\ibxi_2=(\xi_1,\ldots,\xi_M)\oplus(\xi_{M+1},\ldots).
\]
For simplicity, we use $\ibxi$ for both finite-dimensional
and infinite-dimensional cases, and the dimensionality will
be indicated by the set $\cJ$ or $\cJ_{M,p}$ for the
index. Let $N_{M,p}$ be the cardinality of $\cJ_{M,p}$. It
is obvious that there exists a one-to-one correspondence
between $1\leq i\leq N_{M,p}$ and $\iba\in\cJ_{M,p}$. We use
$i(\iba)$ or $\iba(i)$ to indicate such a one-to-one mapping
whenever necessary.

Given a real separable Hilbert space $X$, we denote by
$L_{2}({\bbF};X)$ the Hilbert space of square-integrable
${\cF}$-measurable $X$-valued random elements $f$. When
$X={\bbR}$, we write $L_{2}({\bbF})$ instead of
$L_{2}({\bbF};{\bbR})$. Given a collection
$\cR=\{r_{\iba},\ \iba\in \cJ\}$ of positive real numbers
\Yu{with an upper bound $R$, i.e.}
$r_{\iba}<R$ for all $\iba$, we define the space
$\cR L_2(\bbF;X)$ as the closure of $L_2(\bbF;X)$ in the
norm
\begin{equation}\label{norm-w}
  \|u\|_{\cR L_2(\bbF;X)}^2=
  \sum_{\iba\in \cJ} r_{\iba}\|u_{\iba}\|_X^2,
\end{equation}
where $u=\sum_{\iba\in\cJ}u_{\iba}\hep_{\iba}(\ibxi)$.  The
space $\cR L_2(\bbF;X)$ is called a weighted chaos
space\Yu{, it is a natural norm for the stochastic space
  using Karhunen-Lo\'eve expansion}.  In this work, $X$ is
chosen as $H_0^1(D)$ for elliptic problems with homogeneous
boundary conditions.

\section{Stochastic elliptic models}\label{sec:models}

In this paper, we consider the following two stochastic
elliptic models:
\begin{subequations}
  \begin{align}
    \textrm{Model I: }
    &
      -\nabla\cdot(a(\bx,\omega)\nabla u_{\sI}(\bx,\omega))
      =f(\bx),\label{eqn:u_I}\\
    \textrm{Model II: }
    &
      -\nabla\cdot\left(\left(a^{-1}\right)^{\wprod(-1)}
      (\bx,\omega)\wprod \nabla u_{\sII}(\bx,\omega)\right)
      =f(\bx),\label{eqn:u_III}
  \end{align}
\end{subequations}
with boundary condition $u(\bx, \omega)=0$ on $\partial D$,
where
$a^{-1}(\bx,\omega)\wprod\left(a^{-1}(\bx,\omega)\right)^{\wprod(-1)}=1$. In
particular, we assume that the force term $f(\bx)$ is
deterministic for simplicity and the random coefficient
$a(\bx,\omega)$ takes the following form
\begin{equation}\label{eqn:lognormal}
  a(\bx,\omega)=e^{\wprod (\sigma G(\bx,\omega))}=e^{
    \sigma G(\bx,\omega)-\frac{1}{2}\sigma^2},
\end{equation}
where $G(\bx,\omega)$ is a stationary Gaussian random
process with zero mean and unit variance, subject to a
normalized covariance kernel
$K(\bx_1,\bx_2)=K(|\bx_1-\bx_2|)=\bE[G(\bx_1,\omega)G(\bx_2,\omega)]$.
\remove{ More details about definitions of the exponential
  function with respect to Wick product, and properties of
  the chosen log-normal random process can be found in
  \ref{sec:smoothed_white_noise}.  } According to the Mercer
theorem \cite{Riesz90}, $K(\bx_1,\bx_2)$ has an expansion as
\begin{equation}
  K(\bx_1,\bx_2)=\sum_{i=1}^\infty \lambda_i\phi_i(\bx_1)\phi_i(\bx_2),
\end{equation}
where $\{\lambda_i,\phi_i(\bx)\}_{i=1}^\infty$ are
eigen-pairs of $K(\bx_1,\bx_2)$ satisfying
\begin{equation}
  \int_DK(\bx_1,\bx_2)\phi_i(\bx_2)d\bx_2=\lambda_i\phi_i(\bx_1),\quad
  \int_D\phi_i(\bx)\phi_j(\bx)d\bx=\delta_{ij}.
\end{equation}
Then $G(\bx,\omega)$ has the following Karhunen-Lo\`{e}ve
\Yu{(K-L)} expansion
\begin{equation}\label{eqn:KL}
  G(\bx,\omega)=\sum_{i=1}^\infty\sqrt{\lambda_i}\phi_i(\bx)\xi_\Yu{i},
\end{equation}   
where $\xi_k$ are independent Gaussian random
variables. Furthermore,
\begin{equation}\label{eqn:var}
  \sum_{i=1}^\infty\lambda_i\phi^2_i(\bx)
  = K(\bx,\bx)=\bE[G^2(\bx,\omega)]=1,
  \quad \forall \bx\in D.
\end{equation}

Using equations \eqref{eqn:KL}, \eqref{eqn:var} and
\eqref{eqn:genfunc}, we can obtain the Wiener chaos
expansion of the log-normal random process $a(\bx,\omega)$
\begin{equation}\label{eqn:a_chaos_expansion}
  a(\bx,\omega)
  =e^{\sum_{i=1}^\infty\sigma\sqrt{\lambda_i}\phi_i(\bx)\xi_i-\frac{\sigma^2}{2}
    \lambda_i\phi_i^2(\bx)}
  =\sum_{\iba\in\cJ}\frac{\Phi^{\iba}}{{\iba!}}
  \Hep_{\iba}(\ibxi),
\end{equation} 
where
$\Phi(\bx)=\left(\sigma\sqrt{\lambda_1}\phi_1(\bx),\sigma\sqrt{\lambda_2}\phi_2(\bx),\ldots\right)$.
 
From equation \eqref{eqn:exp_dia_inverse}, it can be easily
derived that
\begin{equation}\label{eqn:a_wick_inverse}
  (a^{-1}(\bx,\omega))^{\wprod(-1)}=e^{-\sigma^2} e^{\wprod\left(\sigma G(\bx,\omega)\right)}.
\end{equation}
Hence, the difference between Wiener chaos expansions of
$\left(a(\bx,\omega)^{-1}\right)^{\wprod(-1)}$ and
$a(\bx,\omega)$ is just a scaling factor $e^{-\sigma^2}$.

To make the difference between models I and II clearer, we
look at the following two linear systems
\begin{equation}\label{eqn:linear_models_13}
  \textrm{I}:\quad
  \left\{
    \begin{array}{rcl}
      \nabla u_{\sI}&=&a^{-1}*\bF_{\sI},\\
      -\nabla\cdot \bF_{\sI}&=&f,
    \end{array}
  \right.
  \qquad
  \textrm{II:}\quad
  \left\{
    \begin{array}{rcl}
      \nabla u_{\sII}&=&a^{-1}\wprod \bF_{\sII},\\
      -\nabla\cdot \bF_{\sII}&=&f.
    \end{array}
  \right.
\end{equation}
where $*$ denotes the operation of the regular product. Thus
model II is basically making the gradient ``smoother''
through the Wick product. Then the equation for
$u_{\sI}-u_{\sII}$ can be obtained as
\begin{equation}
  \left\{
    \begin{array}{rcl}
      \nabla(u_{\sI}-u_{\sII})
      &=& a^{-1}*(\bF_{\sI}- \bF_{\sII})+ a^{-1}(*-\wprod)\bF_{\sII},\\
      -\nabla\cdot(\bF_{\sI}-\bF_{\sII})&=&0,
    \end{array}
  \right.
\end{equation}
which corresponds to a second order elliptic equation for
$u_{\sI}-u_{\sII}$ as
\begin{equation}\label{eqn:u13_diff}
  -\nabla\cdot(a\nabla(u_{\sI}-u_{\sII}))
  =-\nabla\cdot\left(a*\left(a^{-1}(*-\wprod)
      \bF_{\sII}\right)\right).
\end{equation}
Note that we express explicitly the regular products on the
right-hand side since the regular and Wick products do not
commute. It is seen that equation \eqref{eqn:u13_diff}
corresponds to model I while the force term is related to
model II through $\bF_{\sII}$.

\begin{theorem}[\cite{Wan_model2}]
  \label{thm:diff_models}
  Let
  $F=-\nabla\cdot\left(a*\left(a^{-1}(*-\wprod)
      \bF_{\sII}\right)\right)$, where $*$
  indicates the regular product. Assume that
  $F \in \cR L^2(\bbF;H^{-1}(D))$, where
  $D\in\bbR^d$, $d=1,2,3$. Then there exists a set of
  weights
  $\tilde{\cR}=\{\tilde{r}_{\iba},
  \iba\in\cJ\}$, such that
  \begin{equation}
    \|u_{\sI}-u_{\sII}\|_{\tilde{\cR}L^2(\bbF;H^1_0(D))}=C(l_c)
    \sigma^2=\mathcal{O}(\sigma^2),
  \end{equation}
  where $l_c$ is the correlation length. Furthermore,
  $C(l_c)\rightarrow 0$ as $l_c\rightarrow\infty$.
\end{theorem}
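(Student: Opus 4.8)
The plan is to combine a weighted well-posedness estimate for the Model I operator with a Mikulevicius--Rozovskii (M-R) expansion of the forcing term $F$. Writing $w:=u_{\sI}-u_{\sII}$, equation \eqref{eqn:u13_diff} states that $-\nabla\cdot(a\nabla w)=F$, so the task reduces to two things: (i) inverting the Model I elliptic operator in a suitable weighted chaos space, and (ii) showing that the right-hand side $F$ is of size $\mathcal{O}(\sigma^2)$ with a constant that decays as $l_c\to\infty$.

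First I would address the well-posedness. Because $a=e^{\wprod(\sigma G)}$ is not uniformly elliptic, the Lax--Milgram lemma does not apply directly in $L_2(\bbF;H_0^1(D))$; instead I would follow the weighted-chaos framework recalled in the Introduction and construct a family of weights $\tilde{\cR}=\{\tilde r_{\iba}\}$ --- of Cameron--Martin type, decaying fast enough in $|\iba|$ to tame the off-diagonal growth of the multiplication operator induced by $a$ --- so that the bilinear form $\bE\int_D a\nabla w\cdot\nabla v\,d\bx$ is bounded and coercive from $\tilde{\cR}L_2(\bbF;H_0^1(D))$ into its dual, which is comparable to $\cR L_2(\bbF;H^{-1}(D))$. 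This yields the a priori bound $\|w\|_{\tilde{\cR}L_2(\bbF;H_0^1(D))}\le C\,\|F\|_{\cR L_2(\bbF;H^{-1}(D))}$, reducing the theorem to a quantitative estimate of $\|F\|$.

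Next I would expand $F$ using the M-R formula to extract the $\sigma^2$ scaling. The only nontrivial factor in $F$ is $a^{-1}(*-\wprod)\bF_{\sII}$, and by the identity $XY-X\wprod Y=\sum_{n\ge1}\frac{\cD^n X\wprod\cD^n Y}{n!}$ this equals $\sum_{n\ge1}\frac{1}{n!}\cD^n a^{-1}\wprod\cD^n\bF_{\sII}$, whose leading contribution is $n=1$. Since $a=e^{\wprod(\sigma G)}$, each Malliavin derivative $\cD_i$ produces a factor $\sigma\sqrt{\lambda_i}\phi_i$, so $\cD^n a^{-1}=\mathcal{O}(\sigma^n)$; likewise, since $\bF_{\sII}$ reduces to the deterministic flux $\nabla u_0$ (with $-\Delta u_0=f$) at $\sigma=0$, each $\cD^n\bF_{\sII}$ carries at least one power of $\sigma$. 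The genuinely delicate point is that $\bF_{\sII}$ itself depends on $\sigma$, so before this scaling is rigorous I must establish an a priori bound on $\bF_{\sII}$ (equivalently on $u_{\sII}$) in a weighted norm that is uniform in $\sigma$ for $\sigma<1$; this is where the lower-triangular structure of the Model II propagator and the scaling relation \eqref{eqn:a_wick_inverse} are used. Pairing the two $\mathcal{O}(\sigma)$ contributions in the $n=1$ term, and verifying that the tail $n\ge2$ is genuinely higher order, gives $\|F\|_{\cR L_2(\bbF;H^{-1}(D))}=C(l_c)\sigma^2$.

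Finally, for the dependence on correlation length I would track the constant $C(l_c)$ through this expansion. The $n=1$ term contracts $\cD a^{-1}$ against $\cD\bF_{\sII}$, coupling the spatial derivatives of distinct K-L modes through the eigenpairs $\{\lambda_i,\phi_i\}$. As $l_c\to\infty$ the covariance $K$ flattens, its spectrum collapses onto a single constant eigenfunction, and $G$ degenerates to a spatially constant random variable; in that limit $a$ is $\bx$-independent and the regular and Wick products act identically on the solution (the observation noted in the Introduction that Models I and II then coincide), so $F\to0$ and hence $C(l_c)\to0$. The main obstacle I anticipate is step (ii): the uniform-in-$\sigma$ control of $\bF_{\sII}$ together with the convergence of the full M-R series in the weighted norm. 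The coercivity construction in step (i) is technical but follows known templates, whereas the quantitative $\sigma^2$ bound with a vanishing $C(l_c)$ requires carefully pairing the Malliavin-derivative estimates with the spectral decay of $K$.
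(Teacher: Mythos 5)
First, a point of orientation: the paper never proves Theorem \ref{thm:diff_models} itself --- it is imported from \cite{Wan_model2} --- and the only in-paper justification is the formal argument opening Section \ref{sec:algorithm}: expand $a$, the right-hand side of \eqref{eqn:u13_diff}, and the difference $u_{\sI}-u_{\sII}$ as power series in $\sigma$, match coefficients of $\sigma^i$, and observe that $\tu_0=\tu_1=0$, which gives \eqref{eqn:u13_sigma}; the statement $C(l_c)\rightarrow 0$ is supported by the observation that a spatially constant Gaussian (the $l_c\rightarrow\infty$ limit) makes the two models literally coincide. Your plan reaches the same two conclusions by a genuinely different decomposition: you bound the solution operator of the Model I equation in weighted chaos norms, then estimate the forcing $F$ through the Mikulevicius--Rozovskii series, pairing one Malliavin derivative of $a^{-1}$ (worth a factor $\sigma$) against one of $\bF_{\sII}$ (also worth $\sigma$). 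That is closer in spirit to the Wick--Malliavin analysis of \cite{Wan_model3} than to the coefficient-matching sketch the paper gives. What your route buys is a clean separation between well-posedness (your step (i)) and the size of the data (your step (ii)), and it makes explicit where the hypothesis $F\in\cR L^2(\bbF;H^{-1}(D))$ enters. What the paper's route buys is that it never has to invert the non-uniformly elliptic operator sharply: in its hierarchy every $\tu_i$ solves a problem whose leading coefficient is the deterministic $a_0$, so each term is controlled by classical elliptic theory, at the price of the argument being only formal (no control of the series remainder).

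The one substantive flaw is the coercivity claim in your step (i). For log-normal $a$ with the regular product, the form $\bE\int_D a\nabla w\cdot\nabla v\,d\bx$ is not uniformly elliptic, and no choice of diagonal chaos weights as in \eqref{norm-w} is known to render it coercive from $\tilde{\cR}L_2(\bbF;H_0^1(D))$ to its dual; the theory the paper itself leans on (\cite{Sarkis09,Starkloff11}, recalled in the comments on $\mathcal{B}_{\sI}$ in Section \ref{sec:galerkin}) supplies only an inf-sup condition with test functions drawn from a space whose measure is weighted by the random variable $a_{\min}(\omega)$, or a pathwise argument via the Fernique theorem \cite{Charrier_SINUM12}. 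The a priori bound $\|w\|_{\tilde{\cR}L_2(\bbF;H_0^1(D))}\leq C\|F\|_{\cR L_2(\bbF;H^{-1}(D))}$ should therefore be obtained from that inf-sup or pathwise machinery --- this is precisely why the theorem is phrased as ``there exists a set of weights $\tilde{\cR}$'' --- rather than from a Lax--Milgram argument in a weighted norm. With that repair, the remainder of your outline (the $\mathcal{O}(\sigma^2)$ estimate of $F$, the acknowledged need for $\sigma$-uniform control of $\bF_{\sII}$ and of the M-R tail, and the spectral-collapse argument for $C(l_c)\rightarrow 0$) is a sound reconstruction of what the cited proof must do.
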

\begin{remark}
  It can be shown theoretically that for one-dimensional
  cases $D\in\bbR^1$, $C(l_c)\rightarrow 0$ as
  $l_c\rightarrow 0$. For high-dimensional cases, according
  to the Landau-Lifshitz-Matheron conjecture
  {\cite{Landau60,Matheron67}} in the homogenization theory
  for log-normal random coefficients, when
  $l_c\rightarrow 0$, $C(l_c)\rightarrow \frac{1}{2}$ if
  $d=2$, and $C(l_c)\rightarrow\frac{1}{3}$ if $d=3$.
\end{remark}
\begin{remark}
  By noting the Mikulevicius-Rozovskii formula
  \cite{Boris_SNS}
  \begin{equation}\label{MB}
    \hep_{\iba}\hep_{\ibb}
    =\sum_{n=0}^\infty\frac{\cD^{n}\hep_{\iba}\wprod\cD^{n}\hep_{\ibb}}{n!},
  \end{equation}
  where $\cD^{n}$ denotes the $n$th-order Malliavin
  derivative, model I can be approximated arbitrarily well
  as
\begin{equation}\label{eqn:ellip_mr}
  -\nabla\cdot\left(
    \sum_{n=0}^\infty \frac{\cD^{n}a(\bx,\omega)\wprod\nabla\cD^{n}u}{n!}
  \right)
  =f(\bx).
\end{equation}
When $n=0$, equation \eqref{eqn:ellip_mr} recovers the
Wick-type
\begin{equation}
  -\nabla\left(a(\bx,\omega)\wprod\nabla u(\bx,\omega)\right)=f(\bx).
\end{equation}
More discussions about the new Wick-type model given by
equation \eqref{eqn:ellip_mr} can be found in
\cite{Wan_model3}.
\end{remark}

\section{Stochastic Galerkin method}\label{sec:galerkin}

\subsection{Uncertainty propagators}
We now look at the uncertainty propagator of model
I. Substituting the Wiener chaos expansion
\[
  u_{\sI}(\bx,\omega)\approx
  \sum_{\iba\in\cJ_{M,p}} u_{\sI,\iba}(\bx) \Hep_{\iba}(\ibxi)
\] 
into equation \eqref{eqn:u_I} and implementing Galerkin
projection in the probability space, we obtain the
uncertainty propagator for model I as
\begin{equation}\label{eqn:u_I_semi}
  -\sum_{\iba\in\cJ_{M,p}}\nabla\cdot
  \left(
    \bE\left[a(\bx,\omega)
      \Hep_{\iba}\Hep_{\ibg}\right]\nabla u_{\sI,\iba}(\bx)
  \right)
  =f(\bx)\delta_{\mathbf{(0)},\ibg},
  \quad \forall\ibg\in\cJ_{M,p}.
\end{equation}
It is seen that all chaos coefficients in equation
\eqref{eqn:u_I_semi} are coupled together, which means that
they must be solved together. From the numerical point of
view, a proper choice would be iterative methods. Before we
look into the numerical algorithms, we now address the
properties of the matrix
$\bE\left[a(\bx,\omega)\Hep_{\iba}\Hep_{\ibg}\right]$ for
{any} $\bx\in D$.
\begin{lemma}\label{lem:B_I_positive_definite}
  For any given $\bx\in D$, the matrix
  $B_{\sI,ij}(\bx)=\bE
  \left[a(\bx,\omega)\Hep_{\iba(i)}\Hep_{\ibg(j)}\right]$ is
  symmetric and positive definite, where $a(\bx,\omega)$ is
  a log-normal random process defined in equation
  \eqref{eqn:lognormal} and $\iba,\ibg\in\cJ_{M,p}$.
\end{lemma}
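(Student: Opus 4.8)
The plan is to prove symmetry and positive definiteness separately, working with the quadratic form associated with the matrix rather than the matrix entries directly. Symmetry is immediate: since $\Hep_{\iba(i)}\Hep_{\ibg(j)}=\Hep_{\ibg(j)}\Hep_{\iba(i)}$ as a product of real-valued random variables, the expectation $B_{\sI,ij}(\bx)=\bE[a(\bx,\omega)\Hep_{\iba(i)}\Hep_{\ibg(j)}]$ is unchanged under swapping $i$ and $j$. Thus $B_{\sI}(\bx)=B_{\sI}(\bx)^{\mathsf{T}}$.

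For positive definiteness, I would pass to the quadratic form. Fix $\bx\in D$ and let $\bc=(c_1,\ldots,c_{N_{M,p}})^{\mathsf{T}}\in\bbR^{N_{M,p}}$ be an arbitrary nonzero vector. Set $P(\ibxi):=\sum_{i=1}^{N_{M,p}}c_i\Hep_{\iba(i)}(\ibxi)$, which is a nonzero element of the finite-dimensional polynomial space since the $\{\Hep_{\iba}\}_{\iba\in\cJ_{M,p}}$ are linearly independent (they are mutually orthogonal in $L_2(\bbF)$ by the orthogonality relation stated in the excerpt). Then by bilinearity the quadratic form collapses to
\begin{equation}
  \bc^{\mathsf{T}} B_{\sI}(\bx)\bc
  =\sum_{i,j} c_i c_j\,\bE\bigl[a(\bx,\omega)\Hep_{\iba(i)}\Hep_{\ibg(j)}\bigr]
  =\bE\bigl[a(\bx,\omega)\,P(\ibxi)^2\bigr].
\end{equation}
The key structural fact is that $a(\bx,\omega)=e^{\sigma G(\bx,\omega)-\frac{1}{2}\sigma^2}$ from equation \eqref{eqn:lognormal} is strictly positive almost surely, being an exponential. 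Consequently the integrand $a(\bx,\omega)P(\ibxi)^2$ is nonnegative almost surely, so the quadratic form is $\geq 0$.

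The only remaining point is strictness, and this is the step I expect to require the most care. I must rule out $\bE[a(\bx,\omega)P(\ibxi)^2]=0$ for $\bc\neq 0$. Since $a(\bx,\omega)>0$ almost surely and $P(\ibxi)^2\geq 0$, vanishing of the expectation would force $a(\bx,\omega)P(\ibxi)^2=0$ almost surely, hence $P(\ibxi)=0$ almost surely. But $P$ is a nonzero polynomial in the Gaussian variables $\ibxi$, and a nonzero polynomial cannot vanish on a set of full Gaussian measure (the zero set of a nonzero polynomial has Lebesgue measure zero, and the Gaussian density is everywhere positive), contradicting $\bc\neq 0$. Therefore $\bc^{\mathsf{T}} B_{\sI}(\bx)\bc>0$, establishing positive definiteness. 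One subtlety worth verifying is integrability, namely that $\bE[a(\bx,\omega)P(\ibxi)^2]$ is finite so the manipulations are legitimate; this follows because $a(\bx,\omega)$ has finite moments of all orders (it is log-normal, and $\bE[a]=1$ as noted after equation \eqref{eqn:exp_diamond}) and $P^2$ is a polynomial, so a Cauchy--Schwarz bound against the Gaussian moments of $P$ closes the argument.
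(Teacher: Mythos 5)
Your proof is correct, but the way you establish strict positivity is genuinely different from the paper's. Both arguments reduce to the quadratic form $\bc^{\mathsf{T}}B_{\sI}(\bx)\bc=\bE\left[a(\bx,\omega)P^2(\ibxi)\right]$ with $P=\sum_i c_i\Hep_{\iba(i)}$, and both get nonnegativity from $a>0$ a.s. For strictness, however, the paper runs an explicit Gaussian computation: it splits $\Phi=\Phi_1\oplus\Phi_2$ into the first $M$ components and the tail, factors the expectation by independence, applies a Cameron--Martin-type shift $\bE\left[e^{\Phi_1^{\mathsf{T}}\ibxi}F(\ibxi)\right]=e^{\frac{1}{2}\Phi_1^{\mathsf{T}}\Phi_1}\bE\left[F(\ibxi+\Phi_1)\right]$, expands the shifted Hermite polynomials via the shift formula \eqref{eqn:shift_hermite}, and obtains the closed form
\begin{equation}
  \bc^{\mathsf{T}}B_{\sI}(\bx)\bc
  =\sum_{\ibb\in\cJ_{M,p}}\left(\sum_{\iba\geq\ibb}
  \binom{\iba}{\ibb}\Phi_1^{\iba-\ibb}c_{i(\iba)}\right)^2\ibb!,
\end{equation}
whose vanishing forces $\bc=0$ because the associated linear system is unipotent upper-triangular. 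You instead argue softly: vanishing of the expectation forces $P=0$ almost surely, and a nonzero polynomial in the $M$ Gaussian variables cannot vanish on a set of full Gaussian measure since its zero set has Lebesgue measure zero and the Gaussian density is everywhere positive. Your route is shorter, avoids the infinite-dimensional splitting entirely (positivity of $a$ holds a.s. no matter how many $\xi_k$ it involves), and generalizes to any coefficient that is a.s. positive with suitable integrability, not just the log-normal one; your integrability check via Cauchy--Schwarz is also a point the paper glosses over. What the paper's computation buys in exchange is quantitative information: an explicit representation of the quadratic form as a weighted sum of squares, which exhibits the triangular structure exploited elsewhere and could in principle bound the smallest eigenvalue, whereas your argument certifies positivity without measuring it. One small simplification available to you: since the $\Hep_{\iba}$ are orthogonal with $\bE[\Hep_{\iba}\Hep_{\ibb}]=\delta_{\iba\ibb}\iba!$, the identity $\bE[P^2]=\sum_i c_i^2\,\iba(i)!>0$ for $\bc\neq 0$ already contradicts $P=0$ a.s. directly, so the polynomial zero-set argument, while valid, is not strictly needed.
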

\begin{proof}
  Apparently, the matrix $B_{\sI}(\bx)$ is symmetric for any
  $\bx\in D$. For any nonzero vector
  $\bc=(c_1,c_2,\ldots,c_{N_{M,p}})\neq 0$, the following
  inequality holds for any $\bx\in D$
  \begin{align*}
    \bc^TB_{\sI}(\bx)\bc
    =&{}\sum_{i,j=1}^{N_{M,p}}c_ic_j\bE
       \left[e^{\wprod \Wphi(\bx,\omega)}\Hep_{\iba(i)}\Hep_{\ibg(j)}\right]\nonumber\\
    ={}&
         \bE\left[\sum_{i,j}^{N_{M,p}}c_ic_je^{\wprod 
         \Wphi(\bx,\omega)}\Hep_{\iba(i)}\Hep_{\ibg(j)}\right]\nonumber\\
    ={}&
         \bE\left[\left(\sum_{i=1}^{N_{M,p}}\left(e^{\wprod 
         \Wphi(\bx,\omega)}\right)^{1/2}\Hep_{\iba(i)}c_i\right)^2\right]
         \geq0\nonumber,
  \end{align*}
  In other words, $B_{\sI}$ is non-negative definite.

  We subsequently show that if
  $\bc^TB_{\sI}(\bx)\bc=0$,
  then $\bc=0$.  Let
  $\bb\in\bbR^M$. It is easy to generalize
  equation \eqref{eqn:shift_hermite} to the high-dimensional
  case
\begin{align}
  \Hep_{\iba}(\ibxi+\bb)
  &=\prod_{k=1}^M\Hep_{\alpha_k}(\xi_k+b_k)
    =\prod_{k=1}^M\sum_{i=0}^{\alpha_k}\binom{\alpha_k}{i}b_k^{\alpha_k-i}\Hep_{i}(\xi_k)\nonumber\\
  &=\sum_{\ibb\leq\iba}\binom{\iba}{\ibb}
    \bb^{\iba-\ibb}\Hep_{\ibb}(\ibxi).
\end{align}
Let $\Phi(\bx)=\Phi_1(\bx)\oplus\Phi_2(\bx)$, where
\[
  \Phi_1(\bx)=({\sigma}\sqrt{\lambda_1}\phi_1(\bx),\cdots,
  {\sigma}\sqrt{\lambda_M}\phi_M(\bx)) \textrm{ and }
  \Phi_2(\bx)=({\sigma}\sqrt{\lambda_{{M+}1}}
  \phi_{M+1}(\bx),{\sigma}\sqrt{\lambda_{M+2}}\phi_{{M+2}}(\bx),\cdots).
\]
Let $\hat{\ibxi}=(\xi_{M+1},\xi_{M+2},\ldots)$. We then have
\begin{align}
  \bc^{\mathsf{T}}B_{\sI}(\bx)\bc
  &=
    \bE\left[\left(\sum_{i=1}^{N_{M,p}}\left(e^{\wprod 
    \Wphi(\bx,\omega)}\right)^{1/2}\Hep_{\iba(i)}c_i\right)^2\right]\nonumber\\
  &=\bE\left[e^{\Phi_1^{\mathsf{T}}\ibxi+\Phi_2^{\mathsf{T}}\hat{\ibxi}-\frac{1}{2}\phivar}
    \left(\sum_{i=1}^{N_{M,p}}\Hep_{\iba(i)}c_i\right)^2\right]\nonumber\\
  &=\bE\left[e^{\Phi_2^{\mathsf{T}}\hat{\ibxi}-\frac{1}{2}\phivar}\right]\bE
    \left[e^{\Phi_1^{\mathsf{T}}\ibxi}\left(\sum_{i=1}^{N_{M,p}}\Hep_{\iba(i)}c_i\right)^2\right]\nonumber\\
  &=e^{\frac{1}{2}\Phi^{\mathsf{T}}_2\Phi_2-\frac{1}{2}\phivar}e^{\frac{1}{2}
    \Phi^{\mathsf{T}}_1\Phi_1}\bE\left[\left(
    \sum_{i=1}^{N_{M,p}}\Hep_{\iba(i)}(\ibxi
    +\Phi_1)c_i\right)^2\right]\nonumber\\
  &=\bE\left[\left(\sum_{i=1}^{N_{M,p}}\sum_{\ibb\leq\iba(i)}\binom{\iba(i)}
    {\ibb}\Phi_1^{\iba(i)-\ibb}\Hep_{\ibb}(\ibxi)c_i\right)^2\right]\nonumber\\
  &=\bE\left[\left(\sum_{\ibb\in\cJ_{M,p}}\left(\sum_{\iba(i)\geq\ibb}
    \binom{\iba(i)}{\ibb}\Phi_1^{\iba(i)-\ibb}c_i\right)
    \Hep_{\ibb}(\ibxi)\right)^2\right]\nonumber\\
  &=\sum_{\beta\in\cJ_{M,p}}\left(\sum_{\iba\geq\ibb}
    \binom{\iba}{\ibb}\Phi_1^{\iba-\ibb}c_{i(\iba)}\right)^2\ibb!.\nonumber
\end{align}
If $\bc^TB_{\sI}(\bx)\bc=0$, we
have
\[
  \sum_{\iba\geq\ibb}
  \binom{\iba}{\ibb}\Phi_1^{\iba-\ibb}(\bx)c_{i(\iba)}=0,
  \quad\forall\,\ibb\in\cJ_{M,p},\,\bx\in D.
\]
We note that the matrix in the above linear system is an
upper-triangular matrix and the entries on the diagonal line
are 1. In other words, the solution of the above linear
system is $\bc=0$.  To this end, we can conclude that the
matrix $B$ is symmetric and positive definite.
\end{proof}

\begin{remark}
  In numerical computation, we often take
$$B_{\sI,ij}(\bx)=\bE\left[e^{\Phi_1(\bx)^T\ibxi-\frac{1}{2}\phivar}
  \Hep_{\iba(i)}\Hep_{\ibg(j)}\right],$$ which is
the truncated version of the matrix $B_{\sI}$ in
lemma \ref{lem:B_I_positive_definite}. From the proof of
lemma \ref{lem:B_I_positive_definite}, such a matrix is also
symmetric and positive definite.
\end{remark}

Actually
$\bE\left[a(\bx,\omega)\Hep_{\iba}\Hep_{\ibb}\right]$
can be computed exactly as in the following lemma.
\begin{lemma}\label{lem:EaHaHb}
  Let
  $a(\bx,\omega)=\exp^{\wprod}\left(\Wphi(\bx,\omega)\right)$. We
  then have
\begin{equation}
  \bE\left[a(\bx,\omega)\Hep_{\iba}\Hep_{\ibb}\right]
  =\sum_{\ibk\leq\iba\wedge 
    \ibb}\chi(\iba,\ibb,\ibk)
  \Phi^{\iba+\ibb-2\ibk}(\bx),
\end{equation}
where $(\iba\wedge\ibb)_k=\alpha_k\wedge\beta_k$,
$k=1,2,\ldots$
\end{lemma}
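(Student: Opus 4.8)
The plan is to reduce this triple-product moment to two ingredients already available in the excerpt: the Hermite multiplication formula and the chaos expansion of $a$ together with orthonormality.

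First I would lift the one-dimensional multiplication identity \eqref{eqn:hermite_multiply_expansion} to multi-indices. Since $\Hep_{\iba}(\ibxi)=\prod_{k\ge1}\Hep_{\alpha_k}(\xi_k)$ factors coordinatewise, forming $\Hep_{\iba}\Hep_{\ibb}$ and expanding each one-dimensional factor by \eqref{eqn:hermite_multiply_expansion} gives
\[
  \Hep_{\iba}\Hep_{\ibb}
  =\prod_{k\ge1}\sum_{k_k\le\alpha_k\wedge\beta_k}
   \chi(\alpha_k,\beta_k,k_k)\,\Hep_{\alpha_k+\beta_k-2k_k}(\xi_k)
  =\sum_{\ibk\le\iba\wedge\ibb}\chi(\iba,\ibb,\ibk)\,\Hep_{\iba+\ibb-2\ibk}(\ibxi),
\]
where the multi-index constant is the product $\chi(\iba,\ibb,\ibk)=\prod_{k\ge1}\chi(\alpha_k,\beta_k,k_k)$ of the one-dimensional constants from Lemma \ref{lem:He_prop}. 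This is a bookkeeping step: one matches the product of the coordinatewise sums against the multi-index factorial notation $\iba!=\prod_k\alpha_k!$ already set up in the paper.

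Second, I would compute the single-polynomial moment $\bE[a\,\Hep_{\ibd}]$ for an arbitrary multi-index $\ibd$. Using the chaos expansion \eqref{eqn:a_chaos_expansion}, namely $a=\sum_{\ibg\in\cJ}\tfrac{\Phi^{\ibg}}{\ibg!}\Hep_{\ibg}$, together with the orthogonality relation $\bE[\Hep_{\ibg}\Hep_{\ibd}]=\delta_{\ibg\ibd}\,\ibd!$, only the term $\ibg=\ibd$ survives, so that
\[
  \bE\!\left[a(\bx,\omega)\Hep_{\ibd}\right]
  =\sum_{\ibg\in\cJ}\frac{\Phi^{\ibg}(\bx)}{\ibg!}\,\bE[\Hep_{\ibg}\Hep_{\ibd}]
  =\frac{\Phi^{\ibd}(\bx)}{\ibd!}\,\ibd!
  =\Phi^{\ibd}(\bx).
\]

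Finally I would combine the two steps. By linearity of expectation, inserting the multiplication formula into $\bE[a\,\Hep_{\iba}\Hep_{\ibb}]$ and applying the moment identity to each surviving term $\ibd=\iba+\ibb-2\ibk$ yields
\[
  \bE\!\left[a(\bx,\omega)\Hep_{\iba}\Hep_{\ibb}\right]
  =\sum_{\ibk\le\iba\wedge\ibb}\chi(\iba,\ibb,\ibk)\,
   \bE\!\left[a\,\Hep_{\iba+\ibb-2\ibk}\right]
  =\sum_{\ibk\le\iba\wedge\ibb}\chi(\iba,\ibb,\ibk)\,\Phi^{\iba+\ibb-2\ibk}(\bx),
\]
which is the claim. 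There is no hard analytic obstacle here: because $\iba$ and $\ibb$ have finite support all sums are finite, so no convergence or interchange-of-limits issue arises. The only point requiring genuine care is the multi-index bookkeeping in the first step --- verifying that the coordinatewise constants multiply to $\chi(\iba,\ibb,\ibk)$ and that the index range $\{\ibk\le\iba\wedge\ibb\}$ with $(\iba\wedge\ibb)_k=\alpha_k\wedge\beta_k$ is exactly what the product of one-dimensional sums produces. As an alternative that avoids even this, one can use the generating function \eqref{eqn:genfunc}: writing $a=\prod_{k}e^{\Phi_k\xi_k-\Phi_k^2/2}$ (legitimate since $\sum_k\Phi_k^2=\phivar$ by \eqref{eqn:var}) makes $\bE[a\,\Hep_{\iba}\Hep_{\ibb}]$ factor into a product of independent one-dimensional expectations, each of which is evaluated directly from \eqref{eqn:hermite_multiply_expansion} and \eqref{eqn:genfunc}.
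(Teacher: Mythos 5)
Your proposal is correct and follows essentially the same route as the paper's own proof: both lift the one-dimensional multiplication formula \eqref{eqn:hermite_multiply_expansion} to multi-indices and then combine it with the chaos expansion \eqref{eqn:a_chaos_expansion} and Hermite orthogonality, the only (immaterial) difference being that you package the orthogonality step as a separate single-moment identity $\bE[a\,\Hep_{\ibd}]=\Phi^{\ibd}(\bx)$ while the paper expands $a$ first and collapses the double sum at the end. No gap to report.
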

\begin{proof}
  First, equation \eqref{eqn:hermite_multiply_expansion} can
  be generalized straightforwardly to the multi-dimensional
  case as
\[
  \Hep_{\iba}\Hep_{\ibb}=\sum_{\ibk\leq\iba\wedge\ibb}
  \chi(\iba,\ibb,\ibk)
  \Hep_{\iba+\ibb-2\ibk}
\]
with 
\[
  \chi(\iba,\ibb,\ibk)=\frac{\iba!\ibb!}
  {\ibk!(\iba-\ibk)!(\ibb-\ibk)!}.
\]
Using equation \eqref{eqn:a_chaos_expansion}, we have
\begin{eqnarray}
  \bE\left[a(\bx,\omega)\Hep_{\iba}\Hep_{\ibb}\right]
  &=&
      \sum_{\ibg\in\cJ}\frac{\Phi^{\ibg}(\bx)}{\ibg!}\bE
      \left[\Hep_{\ibg}\Hep_{\iba}\Hep_{\ibb}\right]\nonumber\\
  &=&\sum_{\ibg\in\cJ}\frac{\Phi^{\ibg}}{\ibg!}
      \sum_{\ibk\leq\iba\wedge\ibb}
      \chi(\iba,\ibb,\ibk)\bE
      \left[\Hep_{\iba+\ibb-2\ibk}\Hep_{\ibg}\right]\nonumber\\
  &=&\sum_{\ibk\leq\iba\wedge\ibb}
      \chi(\iba,\ibb,\ibk)\Phi^{\iba+\ibb-2\ibk}\nonumber.
\end{eqnarray}
\end{proof}
\begin{remark}
  When $\iba=\ibb$, we have
\[
  \bE\left[a(\bx,\omega)\Hep_{\iba}^2\right]=\sum_{\ibk\leq\iba}
  \chi(\iba,\iba,\ibk)\Phi^{2(\iba-\ibk)}(\bx)\geq
  \chi(\iba,\iba,\iba)=\iba!.
\]
\end{remark}
\begin{remark}
  Lemma \ref{lem:EaHaHb} implies that to compute
  $\bE[a(\bx,\omega)\Hep_{\ibb(i)}\Hep_{\ibg(j)}]$
  exactly, we require the coefficients of Wiener Chaos expansion of
  $a(\bx,\omega)$ up to order $2\ibb(N_{M,p})$.
\end{remark}

We now look at the uncertainty propagators of model II.  Let
$\hat{a}(\bx,\omega)=\left(a^{-1}\right)^{\wprod(-1)}$.
Using equations \eqref{eqn:a_chaos_expansion} and
\eqref{eqn:a_wick_inverse}, the Wiener chaos expansion of
$\hat{a}(\bx,\omega)$ can {be} explicitly derived as
\begin{equation}
  \hat{a}(\bx,\omega)=\sum_{\iba\in\cJ}
  \hat{a}_{\iba}(\bx)\Hep_{\iba}(\ibxi)
  =\sum_{\iba\in\cJ}e^{-\phivar}
  \frac{\Phi^{\iba}}{{\iba!}}\Hep_{\iba}(\ibxi). 
\end{equation} 
Following the same procedure for model I, we can obtain the
uncertainty propagator of model II as
\begin{equation}\label{eqn:u_III_semi}
  -\sum_{\iba\leq\ibg}\nabla\cdot\left(
    \hat{a}_{\ibg-\iba}(\bx)\nabla
    u_{\sII,\iba}(\bx)\right)=f(\bx)\delta_{\mathbf{(0)},\ibg},
  \quad \forall\ibg\in\cJ_{M,p}.
\end{equation}
It is seen that $u_{\sII,\ibg}$ only depends on
the chaos coefficients $u_{\sII,\iba}$ with
$\iba<\ibg$, which introduces a lower-triangular
structure into the matrix
$B_{\sII,ij}(\bx)=\hat{a}_{\ibg(j)-\iba(i)}(\bx)$. In
other words, the deterministic PDEs for
$u_{\sII,\ibg}$ are naturally decoupled and can
be solved one by one. Furthermore, equation
\eqref{eqn:u_III_semi} can be rewritten as
\[
  -\nabla\cdot\left(\hat{a}_{\mathbf{(0)}}(\bx)\nabla
    u_{\sII,\ibg}(\bx)\right)=\sum_{\iba<\ibg}
  \nabla\cdot\left(\hat{a}_{\ibg-\iba}(\bx)\nabla
    u_{\sII,\iba}(\bx)\right)+f(\bx)\delta_{\mathbf{(0)},\ibg}.
\]
Thus, if we employ finite element method to solve the PDE
system \eqref{eqn:u_III_semi}, the bilinear form remains the
same for all chaos coefficients $u_{\sII,\ibg}$,
which only depends on $\hat{a}_{\mathbf{(0)}}(\bx)$.

\subsection{Finite element discretization of uncertainty
  propagators}
We now look at the finite element discretization of
uncertainty propagators of models I and II. Let
$\mathscr{T}_h$ be a family of triangulations of $D$ with
straight edges and $h$ the maximum size of the elements in
$\mathscr{T}_h$. We assume that the family is regular, in
other words, the minimal angle of all the { elements} is
bounded from below by a positive constant. We define the
finite element space as
\begin{equation*}
  V_{h,q}^K=\Big\{v\, \Big| \, v\circ F_K^{-1}\in \mathscr{P}_q(R)\Big\},
  \quad
  V_{h,q}=\Big\{v\in H^1_\Yu{0}(D)\,\Big|\, v|_K\in V_{h,q}^K,\,
  K\in \mathscr{T}_h\Big\},
\end{equation*}
where $F_K$ is the mapping function for the element $K$
which maps the reference element $R$ (for example, an
equilateral triangle or an isosceles right triangle) to the
element $K$ and $\mathscr{P}_q(R)$ denotes the set of
polynomials of degree at most $q$ on $R$. We assume that
$v|_{\partial D}=0$ {for any} $v\in V_{h,q}$. Thus,
$V_{h,q}$ is an approximation of $H^1_0(D)$ by piece-wise
polynomial functions. There exist many choices of basis
functions on the reference elements, such as $h$-type finite
elements \cite{Ciarlet2002}, spectral/$hp$ elements
\cite{KarniadakisS05,Schwab98}, etc. Let
\[
  V_{h,q}=\textrm{span}\{\theta_1(\bx),\theta_2(\bx),\ldots,\theta_{N_x}(\bx)\}\subset
  H_0^1(D),
\]
where $N_x$ is the total number of basis functions in the
finite element space $V_{h,q}$.

The truncated Wiener chaos space {$W_{M,p}$} is defined as
\begin{equation}
  W_{M,p}=\Big\{\sum_{\iba\in{\cJ}_{M,p}}c_{\iba} {\Hep}_{\iba}({\ibxi})\
  \Big|\ c_{\iba}\in\bbR\Big\},
\end{equation}

The stochastic finite element method for model I can be
formulated as follows: Find
$u_{\sI,h}\in V_{h,q} \otimes W_{M,p}$, such that for
all $v\in V_{h,q}\otimes W_{M,p}$
\begin{equation}\label{eqn:weak_form_u_I}
  \mathcal{B}_{\sI}(u_{\sI,h},v)=\mathcal{L}(v),
\end{equation}  
where the bilinear form is
\begin{equation}
  \mathcal{B}_{\sI}(v_1,v_2)=\int_D\bE\left[a(\bx,\omega)\nabla 
    v_1\cdot\nabla v_2\right]d\bx,
\end{equation}
and the linear form
\begin{equation}
  \mathcal{L}(v)=\int_D\bE[fv]d\bx.
\end{equation}

\begin{lemma}
  The stiffness matrix for the stochastic finite element
  method of model I is symmetric and positive definite.
\end{lemma}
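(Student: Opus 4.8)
The plan is to express the stiffness matrix through the bilinear form $\mathcal{B}_{\sI}$ evaluated on the tensor-product basis functions $\theta_m(\bx)\Hep_{\iba}(\ibxi)$, $1\le m\le N_x$, $\iba\in\cJ_{M,p}$, and thereby reduce the statement to the coercivity of $\mathcal{B}_{\sI}$ on $V_{h,q}\otimes W_{M,p}$. The matrix entries are
\[
  A_{(m,\iba),(n,\ibg)}=\mathcal{B}_{\sI}(\theta_m\Hep_{\iba},\theta_n\Hep_{\ibg})
  =\int_D\nabla\theta_m\cdot\nabla\theta_n\,
  \bE\!\left[a(\bx,\omega)\Hep_{\iba}\Hep_{\ibg}\right]d\bx.
\]
Symmetry is immediate: $\nabla\theta_m\cdot\nabla\theta_n=\nabla\theta_n\cdot\nabla\theta_m$ and $\bE[a\Hep_{\iba}\Hep_{\ibg}]=\bE[a\Hep_{\ibg}\Hep_{\iba}]$, so $A_{(m,\iba),(n,\ibg)}=A_{(n,\ibg),(m,\iba)}$.

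For non-negativity, given any coefficient vector $\bc=(c_{m,\iba})$ I would assemble $v=\sum_{m,\iba}c_{m,\iba}\theta_m(\bx)\Hep_{\iba}(\ibxi)\in V_{h,q}\otimes W_{M,p}$ and observe that, after summing the entries against $\bc$,
\[
  \bc^{\mathsf T}A\bc=\mathcal{B}_{\sI}(v,v)
  =\int_D\bE\!\left[a(\bx,\omega)\,|\nabla v|^2\right]d\bx .
\]
Since $a(\bx,\omega)=e^{\sigma G(\bx,\omega)-\frac12\sigma^2}>0$ pointwise by \eqref{eqn:lognormal}, the integrand is non-negative, giving $\bc^{\mathsf T}A\bc\ge 0$.

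The only real content, and the step I expect to be the main obstacle, is strict positivity: showing $\bc^{\mathsf T}A\bc=0\Rightarrow\bc=0$. Here I would argue that vanishing of the integral of a non-negative quantity forces $a(\bx,\omega)|\nabla v|^2=0$ for almost every $(\bx,\omega)$, and then use the strict positivity of $a$ to conclude $\nabla v(\bx,\omega)=0$ almost everywhere. Writing $v=\sum_{\iba\in\cJ_{M,p}}v_{\iba}(\bx)\Hep_{\iba}(\ibxi)$ with $v_{\iba}=\sum_m c_{m,\iba}\theta_m\in V_{h,q}\subset H^1_0(D)$, so that $\nabla v=\sum_{\iba}\nabla v_{\iba}\,\Hep_{\iba}$, the linear independence (orthogonality) of $\{\Hep_{\iba}\}_{\iba\in\cJ_{M,p}}$ in $L_2(\bbF)$ yields $\nabla v_{\iba}(\bx)=0$ a.e.\ for each $\iba$. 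The delicate point is converting this gradient-vanishing into $v_{\iba}\equiv 0$: this is precisely where the homogeneous Dirichlet condition enters, since the Poincar\'e inequality on $H^1_0(D)$ gives $\|v_{\iba}\|_{L_2(D)}\le C\|\nabla v_{\iba}\|_{L_2(D)}=0$. Hence every $v_{\iba}=0$, and by the linear independence of the finite-element basis $\{\theta_m\}$ all $c_{m,\iba}=0$, i.e.\ $\bc=0$.

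An equivalent route, which reuses the work already done, is to apply Lemma~\ref{lem:B_I_positive_definite} componentwise to the spatial gradient: writing $\boldsymbol{d}_\ell(\bx)=(\partial_\ell v_{\iba}(\bx))_{\iba}$ for $\ell=1,\dots,d$, one has $\bc^{\mathsf T}A\bc=\int_D\sum_{\ell=1}^d \boldsymbol{d}_\ell(\bx)^{\mathsf T}B_{\sI}(\bx)\boldsymbol{d}_\ell(\bx)\,d\bx$, each summand being non-negative by the positive definiteness of $B_{\sI}(\bx)$, and vanishing forcing $\boldsymbol{d}_\ell(\bx)=0$ a.e.; the same Poincar\'e argument then finishes the proof. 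Either way, the essential ingredients are the pointwise positivity of $a$ together with the $H^1_0(D)$ structure of the spatial factor.
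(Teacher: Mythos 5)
Your proof is correct, but your primary argument takes a genuinely different route from the paper's. The paper reduces the quadratic form to $\int_D\sum_{j=1}^d\partial_{x_j}\hat{\bu}_{\sI}^{\mathsf{T}}B_{\sI}(\bx)\,\partial_{x_j}\hat{\bu}_{\sI}\,d\bx$ and invokes Lemma \ref{lem:B_I_positive_definite}, the pointwise-in-$\bx$ positive definiteness of the chaos matrix $B_{\sI}(\bx)$, whose proof is an algebraic argument tied to the log-normal structure (Gaussian shift of Hermite polynomials, factorization of the expectation, an upper-triangular linear system), together with the observation that $V_{h,q}$ contains no nonzero constant modes; this is exactly the ``equivalent route'' you sketch at the end. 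Your main argument never touches $B_{\sI}(\bx)$: it uses only that $a(\bx,\omega)=e^{\sigma G-\frac{1}{2}\sigma^2}>0$ almost everywhere, so that $\bc^{\mathsf{T}}A_{\sI}\bc=\int_D\bE\left[a|\nabla v|^2\right]d\bx\ge 0$, and vanishing of this integral forces $\nabla v=0$ a.e.\ (Tonelli plus positivity of $a$); chaos orthogonality then gives $\nabla v_{\iba}=0$ for each $\iba$, and Poincar\'e on $H_0^1(D)$ gives $v_{\iba}=0$. Each step is sound --- the expectations are finite because log-normal fields integrate polynomials --- and the argument is both more elementary and more general: it applies verbatim to any a.e.-positive coefficient with suitable integrability, making no use of the log-normal form. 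What the paper's detour buys is Lemma \ref{lem:B_I_positive_definite} itself as a standalone algebraic fact, valid for \emph{every} $\bx$ (not just a.e.) and transferring unchanged to the truncated matrix $\bE\bigl[e^{\Phi_1^{\mathsf{T}}\ibxi-\frac{1}{2}\sigma^2}\Hep_{\iba}\Hep_{\ibg}\bigr]$ that is actually assembled in computation (see the remark following that lemma); since the $M$-term field $e^{\Phi_1^{\mathsf{T}}\ibxi-\frac{1}{2}\sigma^2}$ is still pointwise positive, your field-level argument covers that truncated case as well, so nothing essential is lost by your shortcut.
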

\begin{proof}
  Consider the approximation
  \begin{equation}\label{eqn:full_expansion_u_I}
    u_{\sI,h}(\bx,\ibxi)=\sum_{\iba\in\cJ_{M,p}}u_{\sI,h,\iba}\Hep_{\iba}(\ibxi)
    =\sum_{\overset{\scriptstyle{1\leq i\leq N_x},}{\scriptstyle{\iba\in\cJ_{M,p}}}}
    u_{\sI,h,\iba,i}\theta_i(\bx)\Hep_{\iba}(\ibxi),
\end{equation}
where $u_{\sI,h,\iba,i}\neq 0$ for some $i$ and
$\iba$.  We have
\begin{align}
  \mathcal{B}_{\sI}(u_{\sI,h},u_{\sI,h})
  &=\sum_{\overset{\scriptstyle{1\leq i\leq N_x,}}
    {\scriptstyle{\iba\in\cJ_{M,p}}}}
    \sum_{\overset{\scriptstyle{1\leq j\leq N_x},}{\scriptstyle{\ibb\in\cJ_{M,p}}}}
    \int_{D}
    \Yu{u_{\sI,h,\iba,i}u_{\sI,h,\ibb,j}}
    \bE\left[a(\bx,\omega)\Hep_{\iba}\Hep_{\ibb}\right]\nabla 
    \theta_i(\bx)\cdot\nabla \theta_j(\bx)d\bx\nonumber\\
  &=\int_D\sum_{\iba,\ibb\in\cJ_{M,p}}\bE\left[a(\bx,\omega)
    \Hep_{\iba}\Hep_{\ibb}\right]\nabla u_{\sI,h,\iba}\cdot\nabla 
    u_{\sI,h,\ibb}d\bx\nonumber\\
  &=\int_D\left(\sum_{j=1}^d \partial_{x_j} \left(\mathbf{\hat{u}}_{\sI}(\bx)\right)^{\mathsf{T}}B_{\sI}(\bx)
    \partial_{x_j} \mathbf{\hat{u}}_{\sI}(\bx)\right) d\bx\nonumber,
\end{align}
where the vector $\hat{\bu}_{\sI}(\bx)$ is
defined as
$\left(\hat{\bu}_{\sI}(\bx)\right)_k
=u_{\sI,h,\iba(k)}(\bx)$,
$k=1,\ldots,N_{M,p}$. Due to the homogeneous boundary
conditions, a nonzero constant mode does not exist in the
space $V_{h,q}$.  Using Lemma
\ref{lem:B_I_positive_definite}, we know that
$\mathcal{B}_{\sI}(u_{\sI,h},u_{\sI,h})>0$,
and the conclusion follows.
\end{proof}

\subsection{Structures of stiffness matrices of the sFEM}

Based on equation \eqref{eqn:full_expansion_u_I}, we define
some matrix notations:
\begin{equation}
  \bu_{\sI}=\left[
    \begin{array}{c}
      \bu^{\sI,1}\\
      \bu^{\sI,2}\\
      \vdots\\
      \bu^{\sI,N_{M,p}}
    \end{array}
  \right],\quad
  \bu^{\sI,i}=\left[
    \begin{array}{c}
      u_{\sI,h,\iba(i),1}\\
      u_{\sI,h,\iba(i),2}\\
      \vdots\\
      u_{\sI,h,\iba(i),N_x}
\end{array}
\right],\,\, i=1,\ldots,N_{M,p}.
\end{equation}
Obviously, the total number of unknowns is
$N_x\times N_{M,p}$. The weak form \eqref{eqn:weak_form_u_I}
leads to the linear system
$A_{\sI}\bu_{\sI}=\mathbf{f}$ with the
block structure
\begin{equation}
  A_{\sI}=\left(
    \begin{array}{cccc}
      A_{\sI,11}&A_{\sI,12}&\ldots&A_{\sI,1N_{M,p}}\\
      A_{\sI,21}&A_{\sI,22}&\ldots&A_{\sI,2N_{M,p}}\\
      \vdots&\vdots&\ddots&\vdots\\
      A_{\sI,N_{M,p}1}&A_{\sI,N_{M,p}2}&\ldots&A_{\sI,N_{M,p}N_{M,p}}
    \end{array}
  \right),\quad
  \mathbf{f}=\left(
    \begin{array}{c}
      \mathbf{f}_1\\
      \mathbf{f}_2\\
      \vdots\\
      \mathbf{f}_{N_{M,p}}
    \end{array}
  \right).
\end{equation}
Considering the approximation of $a(\bx,\omega)$ as (see
equation \eqref{eqn:a_chaos_expansion})
\begin{equation}
  a^{M,\hat{p}}(\bx,\ibxi)=\sum_{\iba\in\cJ_{M,\hat{p}}}a^{M,\hat{p}}_{\iba}
  (\bx)\Hep_{\iba}=\sum_{\iba\in\cJ_{M,\hat{p}}}
  \frac{\Phi^{\iba}(\bx)}{{\iba!}}\Hep_{\iba}(\ibxi),
\end{equation}
where $\hat{p}$ is the polynomial order of the Wiener chaos
expansion.  Then the blocks $A_{\sI,ij}$ can be expressed as
\begin{equation}
  A_{\sI,ij}=\sum_{\iba\in\cJ_{M,\hat{p}}}\bE
  \left[\Hep_{\iba}\Hep_{\ibb(i)}\Hep_{\ibg(j)}\right]S_{\iba},\quad 
  i,j=1,\ldots,N_{M,p}
\end{equation}
where
\begin{equation}
  \left(S_{\iba}\right)_{ij}=\int_Da^{M,\hat{p}}_{\iba}(\bx)
  \nabla\theta_i(\bx)\cdot\nabla\theta_j(\bx)d\bx.
\end{equation}
Define matrix $C_{\iba}$ as
\begin{equation}
  \left(C_{\iba}\right)_{ij}=\bE
  \left[\Hep_{\iba}\Hep_{\ibb(i)}\Hep_{\ibg(j)}\right].
\end{equation}
Then the matrix $A_{\sI}$ can be rewritten in the
tensor-product form as
\begin{equation}
  A_{\sI}=\sum_{\iba\in\cJ_{M,\hat{p}}}C_{\iba}\otimes S_{\iba}.
\end{equation}
Then the matrix-vector multiplication of $A_{\sI}\bu_{\sI}$
can be computed in a relatively efficient way. We rewrite
the vector $A_{\sI}\bu_{\sI}$ of length $N_xN_{M,p}$ to an
$N_{M,p}$-by-$N_x$ matrix and denote such a matrix as
$[A_{\sI}\bu_{\sI}]$. Then we have
\begin{equation}
  [A_{\sI}\bu_{\sI}]=\sum_{\iba\in\cJ_{M,\hat{p}}}[S_{\iba}
  \bu^{\sI,1}\,S_{\iba}\bu^{\sI,2}\ldots
  S_{\iba}
  \bu^{\sI,N_{M,p}}]C_{\iba}^{\mathsf{T}},
\end{equation}  
where $S_{\iba}\bu^{\sI,i}$ is the $i$th column vector of an
$N_{M,p}$-by-$N_x$ matrix.

\remove{
\begin{remark}
  When we consider $hp$ finite element discretization of the
  physical space, the components of $\bu^{\sI,i}$ can be
  further grouped as boundary modes and interior modes. The
  interior modes correspond to the basis functions that are
  equal to zero on element boundaries. In other words, the
  interior modes of $\bu^{\sI,i}$ are element-wise. When
  inverting $A_{\sI}$, we can first inverse the Schur
  complement of the sub-matrix that only involves the
  interior modes, and then address the interior modes by
  element-wise local problems. Since we are mainly
  interested in the discretization in the probabilistic
  space in this work, we will only consider linear finite
  elements for physical discretization for simplicity, where
  no interior modes exist.
\end{remark}
}

\subsection{Comments on the bilinear form
  $\mathcal{B}_{\sI}$}

Using the log-normal random coefficient $a(\bx,\omega)$, we
have shown that the bilinear form
$\mathcal{B}_{\sI}(\cdot,\cdot)$ is positive
definite. However, we do not have the ellipticity here
because $a(\bx,\omega)$ is not strictly positive. Instead of
using the Lax-Milgram lemma, the existence and uniqueness of
a solution $u(\bx,\omega)\in L_2(H_0^1(D))$ can be
established by the Fernique theorem with appropriate
regularity assumptions for the covariance function of the
underlying Gaussian field \cite{Charrier_SINUM12}. The key
observation is that the random variable
$a_{\min}^{-1}(\omega)=\min_{\bx\in D}a(\bx,\omega)\in
L_p(\bbF), p>0$. From the theoretical point of view, an
inf-sup condition can be established for the continuous
bilinear form $\mathcal{B}_{\sI}(v_1,v_2)$, where
$v_1\in L_2(\bbF;H_0^1(D))$ and
$v_2\in
L_2(\hat{\bbF}=(\Omega,\cF,a^{2}_{\min}(\omega)P(d\omega));H_0^1(D))$
\cite{Charrier_SINUM12,Starkloff11}. Note here that the
measure of the probability space for test functions $v_2$ is
weighted by the random variable
$a^{2}_{\min}(\omega)$. According to theoretical
observations, one choice for the test functions can be
\[
  \left\{\frac{v}{a_{\min}(\omega)}: v\in
    L_2(\bbF;H_0^1(D))\right\}.
\] 
However, it is not clear how to deal with $a_{\min}(\omega)$
numerically. For numerical studies of model I with the
Galerkin projection, we usually choose test functions from
$v_2\in L_2(\bbF;H_0^1(D))$.  Since the stiffness matrix
$A_{\sI}$ is symmetric and positive definite, the existence
and uniqueness of solution $\bu_{\sI}$ is guaranteed. No
divergence of the solution with respect to
$L_2(\bbF;H_0^1(D))$ norm has been observed for such a
procedure.

\section{Numerical algorithms}\label{sec:algorithm}

Based on the properties of Wick product and the assumptions
of Theorem \ref{thm:diff_models}, we have the following
asymptotic results \cite{Wan_model2} for equation
\eqref{eqn:u13_diff} satisfied by $u_{\sI}-u_{\sII}$. With
respect to $\sigma$, we have the following power series
\[
  -\nabla\cdot\left(a*\left(a^{-1}(*-\wprod)
      \bF_{\sII}\right)\right)=\sigma^2\tilde{f}_2(\bx,\ibxi)
  +\sigma^3\tilde{f}_3(\bx,\ibxi)+\ldots.
\]
Substituting
\[
  a(\bx,\omega)=a_0(\bx)+\sigma a_1(\bx,\omega)+\sigma^2
  a_2(\bx,\omega)+\ldots
\]
and the following ansatz of $u_{\sI}-u_{\sII}$
\[
  u_{\sI}-u_{\sII}=\tu_0(\bx)+\sigma\tu_1(\bx,\ibxi)+
  \sigma^2\tu_2(\bx,\ibxi)+\ldots
\]
into equation \eqref{eqn:u13_diff} and comparing the
coefficients of $\sigma^i$, we obtain
\begin{eqnarray}
  -\nabla\cdot(a_0\nabla \tu_0)&=& 0,\nonumber\\
  -\nabla\cdot(a_0\nabla \tu_1)&=& \nabla\cdot(a_1\nabla \tu_0),\nonumber\\
  -\nabla\cdot(a_0\nabla \tu_2)&=& \nabla\cdot(a_2\nabla \tu_0)
                                   +\nabla\cdot(a_1\nabla \tu_1)+\tilde{f}_2(\bx,\ibxi),\nonumber\\
                               &\ldots\ldots&\nonumber,
\end{eqnarray}
which results in
\[
  \tu_0(\bx)=\tu_{1}(\bx,\ibxi)=0,\quad
  \tu_{i}(\bx,\ibxi)\neq 0,\,i=2,3,\ldots
\]
Thus, $u_{\sI}-u_{\sII}$ has the following power series
expansion with respect to $\sigma$
\begin{equation}\label{eqn:u13_sigma}
  u_{\sI}-u_{\sII}=\sigma^2\tu_2(\bx,\ibxi)+\sigma^3\tu_3(\bx,\ibxi)
  +\ldots,
\end{equation}
which holds for any $\bx\in D$. Then both the mean and
standard deviation of $u_{\sI}-u_{\sII}$ are of
$\mathcal{O}(\sigma^2)$ if they exist.

When $l_c\rightarrow\infty$, the random coefficient becomes
\begin{equation}\label{eqn:a_1rv}
  a(\bx,\omega)=e^{\sigma\xi-\frac{1}{2}\sigma^2},
\end{equation}
where $\xi\sim\mathcal{N}(0,1)$. In other words, the noise
is spatially independent. Model II becomes
\begin{equation}
  -\nabla\cdot\left((a^{-1})^{\wprod(-1)}\wprod\nabla u\right)
  = -(a^{-1})^{\wprod(-1)}\wprod\Delta u=f(\bx),
\end{equation}
which is equivalent to model I, since
\begin{equation}
  -\Delta u=a^{-1}\wprod f(\bx)=a^{-1}f(\bx).
\end{equation}
We now consider a perturbation of the coefficient given in
equation \eqref{eqn:a_1rv}
\begin{equation}\label{eqn:a_1rv_p}
  a(\bx,\omega)=e^{\sigma(1+\epsilon\phi(\bx))\xi-\frac{1}{2}\sigma^2},
\end{equation}
where $\epsilon$ is a small positive number. When
$\epsilon\rightarrow0$, $u_{\sII}\rightarrow u_{\sI}$.  We
use the random coefficient \eqref{eqn:a_1rv_p} to mimic the
case that $l_c\rightarrow\infty$.

\begin{example}
  Consider a one-dimensional exponential covariance kernel
  on $x\in[0,1]$
  \[
    K(x_1,x_2)=e^{-\frac{|x_1-x_2|}{l_c}}.
  \]
  Its eigenvalues satisfy
  \begin{equation}
    w^2=\frac{2\epsilon-\epsilon^2\lambda_i}{\lambda_i},
    \quad
    (w^2-\epsilon^2)\tan(w)-2\epsilon w=0,
  \end{equation}
  where $\epsilon=1/l_c$. Its eigenfunctions are
  \begin{equation}
    \phi_i(x)=\frac{w\cos(wx)+\epsilon\sin(wx)}
    {\sqrt{\frac{1}{2}(\epsilon^2+w^2)+(w^2-\epsilon^2)\frac{\sin(2w)}{4w}+\frac{\epsilon}{2}(1-\cos(2w))}}.
  \end{equation}
  It can be shown that as $\epsilon\rightarrow 0$,
  $w\sim \sqrt{2}\epsilon^{1/2}$, which results in that
  $\lambda_1= 1+\mathcal{O}(\epsilon)$ and
  $\phi_1(x)=1+\mathcal{O}(\epsilon)$. Thus it is reasonable
  to consider a perturbation given in equation
  \eqref{eqn:a_1rv_p} with $\epsilon=1/l_c$.
\end{example}

We here use a one-dimensional elliptic problem to examine
the random coefficient \eqref{eqn:a_1rv_p} and present a
numerical study of the convergence behavior of
$u_{\sII}\rightarrow u_{\sI}$ as $\epsilon\rightarrow 0$. In
figure \ref{fig:diff_eps_sigma} we plot the relative
difference between $u_{\sI}$ and $u_{\sII}$ defined as
\[
  \epsilon_r =
  \frac{\|u_{\sI}-u_{\sII}\|_{L_2(\Omega;H_0^1(D))}}
  {\|u_{\sI}\|_{L_2(\Omega;H_0^1(D))}}
\]
with respect to $\sigma$ and $\epsilon$. It is seen that the
dominant error takes a form
\begin{equation}
  \log(\epsilon_r)=\log(\epsilon)+2\log(\sigma)+C,
\end{equation}
i.e.,
\begin{equation}
  \epsilon_r\sim C\epsilon\sigma^2,
\end{equation}
where $C$ is a general constant. This suggests that although
model II provides a general second-order approximation of
model I, the constant before $\sigma^2$ goes to zero
linearly with respect to $1/l_c$ as $l_c$ goes to infinity.

\begin{figure}[htbp]
  \center{
    \includegraphics[width=0.7\textwidth]{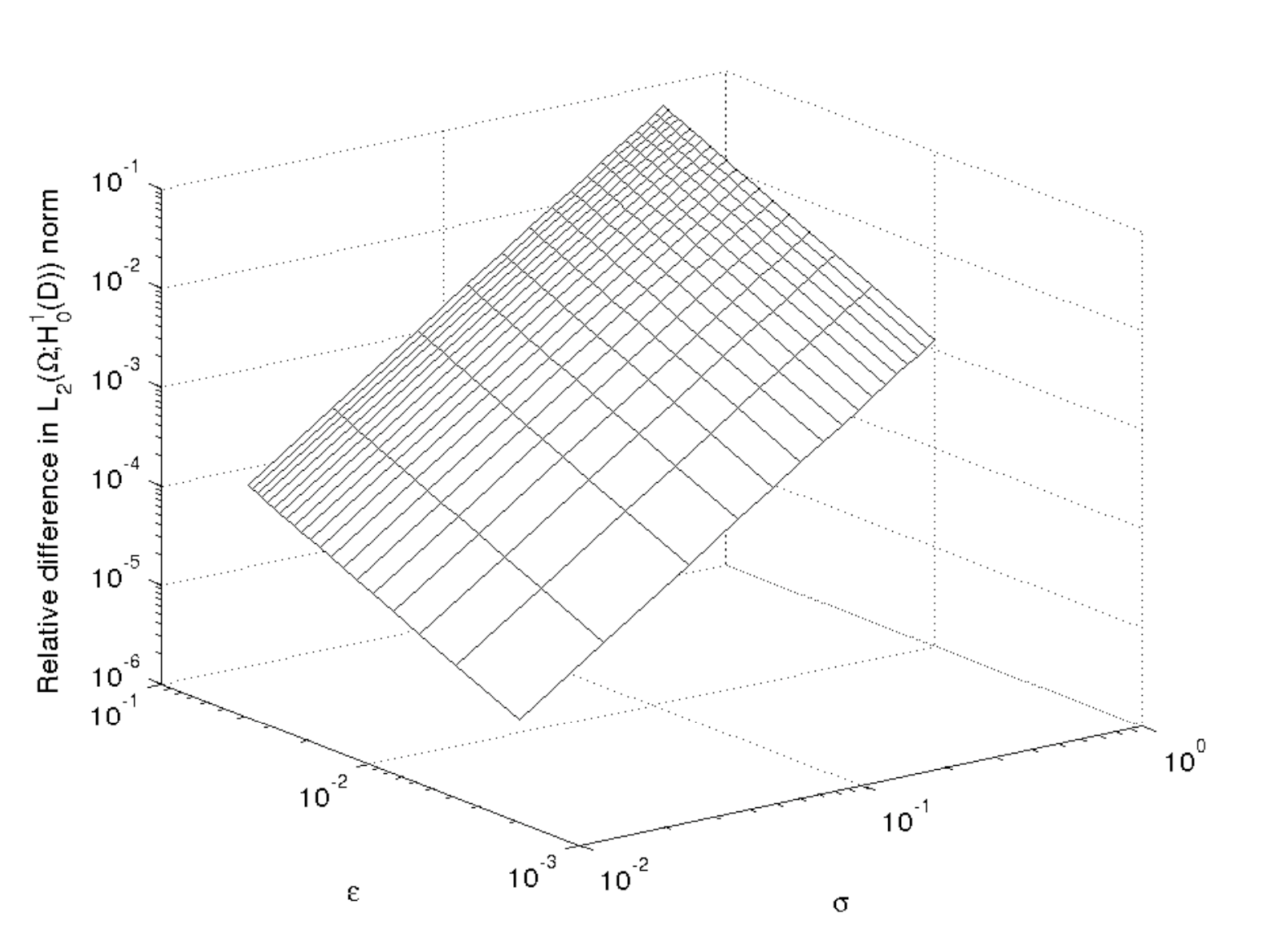}
  }
  \caption{Relative difference between $u_{\sI}$ and
    $u_{\sII}$ with respect to $\sigma$ and
    $\epsilon$ for one-dimensional elliptic problem subject
    to the random coefficient \eqref{eqn:a_1rv_p}.}
  \label{fig:diff_eps_sigma}
\end{figure}

To accelerate the numerical algorithms for model I, such as
Monte Carlo method and Galerkin projection method, we take
the advantage of the small difference between $u_{\sI}$ and
$u_{\sII}$ either when $\sigma$ is relatively small or the
correlation length is relatively large such that the
constant $C(l_c)$ is close to 0, and the fact that
$u_{\sII}$ can be obtained effectively. Based on this idea,
we use the solution $u_{\sII}$ as a predictor of $u_{\sI}$,
or the stiffness matrix \Yu{$A_{\sII}$} of model II as a
preconditioner of $A_{\sI}$.

\subsection{Variance reduction for the Monte Carlo method}

When the correlation length $l_c$ is relatively small,
eigenvalues of the covariance kernel decay slowly implying
that a relatively large number of Gaussian random variables
need to be kept for a good approximation of the log-normal
random coefficient. For such a case, the Monte Carlo method
can be more efficient than the Wiener chaos expansion. We
then propose the following two-step methodology:
\begin{enumerate}
\item[(i)] \textbf{Predictor given by $u_{\sII,h}$}:
  We first consider Wiener chaos expansion of model II to
  obtain the numerical solution $u_{\sII,h}$. Its
  mean will be just the zeroth order coefficient
  $u_{\sII,h,\boldsymbol{(0)}}$.
\item[(ii)] \textbf{A predictor-corrector method}: Using the
  solution $u_{\sII,h}$ as a control variate for variance
  reduction, we further refine the Monte Carlo simulations
  of $u_{\sI,h}$ in the following way: \Yu{
  \begin{equation}
  \tu_{\sI,h}(\bx,\ibxi)
  := u_{\sII,h,\boldsymbol{(0)}}(\bx) 
  + (u_{\sI,h}(\bx;\ibxi)-u_{\sII,h}(\bx;\ibxi)),
  \end{equation}
\begin{equation}\label{eqn:mc_Wick}
  \bE_{\text{IS}}[u_{\sI,h}](\bx) := \bE_{\tmc}[\tu_{\sI,h}](\bx)
  := 
  \frac{1}{N_{\tmc}}\sum_{i=1}^{N_{\tmc}}\tu_{\sI,h}(\bx;\ibxi^{(i)})
  ,
\end{equation}
} where $N_{\tmc}$ indicates the number of samples of
$\ibxi$ and $\ibxi^{(i)}$ the $i$-th sample.
\end{enumerate}
Based on equation \eqref{eqn:u13_sigma}, we have the
following lemma:
\Yu{
  \begin{lemma}\label{lem:IS_err}
	We have the following error estimate
	\begin{equation} \label{eqn:IS_err_estimate}
	\big\| \bE_{\text{IS}}[u_{\sI,h}] - \bE[u_{\sI,h}]  \big\|^2_{L_2(\bbF;H_0^1(D))} = 
	\int_D \var (\bE_{\text{IS}}[u_{\sI,h}])(\bx) d\bx
= \mathcal{O}(\sigma^4)N^{-1}_{\tmc}.
	\end{equation}
\end{lemma}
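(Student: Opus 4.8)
The plan is to analyze the Monte Carlo estimator defined in \eqref{eqn:mc_Wick} and isolate what controls its variance. The key structural observation is that $\bE_{\text{IS}}[u_{\sI,h}]$ is an empirical average of the random field $\tu_{\sI,h}(\bx;\ibxi) = u_{\sII,h,\boldsymbol{(0)}}(\bx) + (u_{\sI,h}(\bx;\ibxi) - u_{\sII,h}(\bx;\ibxi))$. Since the first summand is deterministic and $\bE[u_{\sII,h}] = u_{\sII,h,\boldsymbol{(0)}}$ (the zeroth chaos coefficient is the mean), the variance of $\tu_{\sI,h}$ equals the variance of the control-variate residual $u_{\sI,h} - u_{\sII,h}$. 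Therefore the left-hand side of \eqref{eqn:IS_err_estimate} is, by the standard independence of the samples $\ibxi^{(i)}$, exactly $N_{\tmc}^{-1}$ times the pointwise variance $\var(u_{\sI,h} - u_{\sII,h})(\bx)$ integrated over $D$, which I would write out in the first step to justify the middle equality in the statement.

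The second step is to bound that integrated variance by $\mathcal{O}(\sigma^4)$. Here I would invoke \eqref{eqn:u13_sigma}, which gives the power-series expansion $u_{\sI} - u_{\sII} = \sigma^2\tu_2(\bx,\ibxi) + \sigma^3\tu_3(\bx,\ibxi) + \cdots$ valid pointwise in $\bx$. Passing to the discrete solutions $u_{\sI,h}, u_{\sII,h}$, the same leading-order cancellation holds (the $\sigma^0$ and $\sigma^1$ terms vanish), so $u_{\sI,h} - u_{\sII,h} = \sigma^2 \tu_{2,h}(\bx,\ibxi) + O(\sigma^3)$. Taking the variance, the leading contribution is $\sigma^4 \var(\tu_{2,h})(\bx)$, and integrating over $D$ yields $\int_D \var(u_{\sI,h} - u_{\sII,h})(\bx)\, d\bx = \mathcal{O}(\sigma^4)$, provided the relevant moments exist. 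Combining with the $N_{\tmc}^{-1}$ factor from the first step gives the claimed $\mathcal{O}(\sigma^4) N_{\tmc}^{-1}$.

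The main obstacle is the rigorous justification of the moment/variance finiteness and the interchange needed to make the middle equality in \eqref{eqn:IS_err_estimate} precise. Concretely, I must ensure that $\tu_2$ (and the higher-order terms) lie in $L_2(\bbF;H_0^1(D))$ so that $\var(\tu_{2,h})$ is finite and the integral over $D$ converges; this is the existence caveat flagged in the sentence ``if they exist'' just before the lemma. Since model I lacks uniform ellipticity, the square-integrability of $u_{\sI,h}$ and its differences is not automatic, and I would lean on the fact that the discrete solution lives in the finite-dimensional space $V_{h,q}\otimes W_{M,p}$, where all norms are equivalent and finite, so that at the discrete level the required moments are guaranteed. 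I would also need to confirm that the power-series expansion \eqref{eqn:u13_sigma} survives the Galerkin discretization without disturbing the $\sigma^2$ leading order, which follows because the discretization is linear and commutes with the order-by-order matching in $\sigma$ that produced $\tu_0 = \tu_1 = 0$.
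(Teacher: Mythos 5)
Your proposal is correct and follows essentially the same route as the paper's proof: unbiasedness of the estimator (via $\bE[u_{\sII,h}]=u_{\sII,h,\boldsymbol{(0)}}$) gives the first equality, independence of the samples gives the $N_{\tmc}^{-1}$ factor, and the expansion \eqref{eqn:u13_sigma} yields the $\mathcal{O}(\sigma^4)$ bound on $\var(u_{\sI,h}-u_{\sII,h})$. The only cosmetic differences are that the paper bounds the variance by the raw second moment $\bE[(u_{\sI,h}-u_{\sII,h})^2]$ instead of expanding the variance term by term in $\sigma$, and it does not spell out the discretization and moment-existence caveats you carefully flag.
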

\begin{proof}
  Firstly, it is easy to check that
  $\bE\big[\bE_{\text{IS}}[u_{\sI,h}]\big] =
  \bE[u_{\sI,h}]$, so the first equal sign holds.  Secondly,
	\begin{align*}
	\var(\bE_{\text{IS}}[u_{\sI,h}])
	= N_\tmc^{-1} \var( \tu_{\sI,h}) 
	& = N_\tmc^{-1} \var( u_{\sI,h} - u_{\sII,h} ) \\
	& = N_\tmc^{-1} \big( \bE[ (u_{\sI,h} - u_{\sII,h})^2] - \bE^2[u_{\sI,h} - u_{\sII,h}] \big)\\
	&\leq N_\tmc^{-1} \bE[ (u_{\sI,h} - u_{\sII,h})^2]\\
	& = N_\tmc^{-1} \int (u_{\sI,h}-u_{\sII,h})^2\rho(\ibxi)d\ibxi
	=\mathcal{O}(\sigma^4)N_\tmc^{-1},\nonumber
	\end{align*}
	where the last step is obtained using
    \eqref{eqn:u13_sigma}.  Then the second equal sign of
    \eqref{eqn:IS_err_estimate} is obtained by taking
    integration of the above equation with respect to
    spatial variable $\bx$.
\end{proof}
From \eqref{eqn:IS_err_estimate}, we have
\begin{equation} \label{eqn:IS_err_estimate2}
\big\| \bE_{\text{IS}}[u_{\sI,h}] - \bE[u_{\sI,h}]  \big\|_{L_2(\bbF;H_0^1(D))}
= \mathcal{O}(\sigma^2)N^{-1/2}_{\tmc}.
\end{equation}
Since a direct Monte Carlo method to calculate
$\bE[u_{\sI,h}]$ has an error $\mathcal{O}(1) N_\tmc^{-1}$,
so the standard deviation reduction is quadratic with
respect to $\sigma$.  }


We now look at the computation cost. For the brute-force
Monte Carlo method, the cost is
$\mathcal{O}((\tau_1+\tau_2) \hat{N}_{\text{mc}})$, where
$\tau_1$ is the time for construction of the stiffness
matrix and $\tau_2$ the time for solving a linear
system. For the proposed strategy, the cost is
$\mathcal{O}((\tau_1+\tau_2+\tau_3){N}_{\text{mc}}+\tau_4)$,
where $\tau_3$ is the time for the evaluation of
$u_{\sII,h}(\bx;\ibxi^{(i)})$, which is much smaller than
$\tau_1+\tau_2$, {and $\tau_4$ is the time to obtain
  $u_{\sII,h}$}. To obtain $u_{\sII,h}$, only one stiffness
matrix is needed. Since the uncertainty propagator is
decoupled, $\tau_4\approx \tau_1+N_{M,p}\tau_2$.  Then the
cost for the proposed strategy is about
$\mathcal{O}((\tau_1+\tau_2){N}_{\text{mc}}+\tau_2N_{M,p}+\tau_1)$. Thus
if a low-order Wiener chaos solution $u_{\sII,h}$ serves as
an effective control variate, the proposed strategy can be
much more efficient than the brute-force Monte Carlo method,
since $N_{\text{mc}}$ can be much smaller than
$\hat{N}_{\text{mc}}$ for the same accuracy.

\begin{remark}
  Consider
  \begin{equation}
    \tu_{\sI,h}(\alpha;\bx,\ibxi)
    = u_{\sI,h}(\bx;\ibxi)
    - \alpha(u_{\sII,h}(\bx;\ibxi)-u_{\sII,h,\boldsymbol{(0)}}(\bx)),
  \end{equation}
  where $\alpha$ is a real number. It is well known that for
  all $\alpha\in(-\infty,\infty)$,
  $\tu_{\sI,h}(\alpha)$ provides an unbiased
  estimator of $\bE[u_{\sI,h}]$ through
  \begin{equation}
    \bE_{\tmc}[\tu_{\sI,h}]=\frac{1}{N_{\tmc}}
    \sum_{i=1}^{N_{\tmc}}\tu_{\sI,h}(\alpha;\bx,\ibxi^{(i)}),
\end{equation}
which holds for any $\bx\in D$. For a fixed $\bx\in D$, we
know that if we choose
$\alpha^*=\frac{\sigma_{\sI,\sII}}{\sigma_{\sI}^2}$
with
\[
  \sigma_{i}=\bE[(u_{i,h}-\bar{u}_{i,h})^2]^{1/2},\,
  i=\sI,\,\sII\quad
  \textrm{and}\quad
  \sigma_{\sI,\sII}=\bE[(u_{\sI,h}-\bar{u}_{\sI,h})(u_{\sII,h}-\bar{u}_{\sII,h})],
\]
the variance of $\tu_{\sI,h}$ is minimized with
respect to $\alpha$ such that
\[
  \var(\tu_{\sI,h})(\alpha^*)=\sigma_{\sI}^2(1-\rho_{\sI,\sII})^2,
\]
where
$\rho_{\sI,\sII}=\sigma_{\sI,\sII}/(\sigma_{\sI}\sigma_{\sII})$
is the autocorrelation function of $u_{\sI,h}$ and
$u_{\sII,h}$. Due to the fact given by equation
\eqref{eqn:u13_sigma} and theorem \ref{thm:diff_models},
$\rho_{\sI,\sII}\approx 1$ for small $\sigma$ or large
$l_c$, when $u_{\sI,h}$ and $u_{\sII,h}$ are almost linear
corresponding to $\alpha^*\approx 1$ (see more numerical
experiments in \cite{Wan_model2}). This is the reason we
choose $\alpha=1$ in equation \eqref{eqn:mc_Wick}.
\end{remark}
\begin{algorithm}\label{alg:MC_Wick}
  {\textbf{Solve} model II to obtain the Wiener chaos expansion of $u_{\sII,h}(\bx,\ibxi)$.}\\
  \For{$i=1,2,\ldots,N_{\tmc}$} { \textbf{Sample} model I to
    obtain $u_{\sI,h}(\bx,\ibxi^{(i)})$\; \textbf{Sample}
    the solution of model II to obtain
    $u_{\sII,h}(\bx,\ibxi^{(i)})$\; \textbf{Update} the
    statistics using an unbiased estimator as equation
    \eqref{eqn:mc_Wick}.  }
  \caption{Variance reduction for Monte Carlo simulations}
\end{algorithm}

\subsection{Stochastic Galerkin projection method}
Due to the large number of unknowns and the strong coupling
between the chaos coefficients $u_{\sI,\iba}$,
iterative numerical methods are more appropriate for solving
the linear system given by the finite element discretization
of uncertainty propagator \eqref{eqn:u_I_semi} of model
I. In other words, an effective preconditioner is
required. Consider the linear system
\begin{equation}
  A_{\sI}\bu_{\sI}=\mathbf{f}.
\end{equation}
Let $\bu_{\sII}$ be a vector consisting of unknowns from the
discretization of $u_{\sII,h}$ based on the same basis as
that for $u_{\sI,h}$. Define $A_{\sII}$ as the stiffness
matrix corresponding to the discretization of uncertainty
propagator of model II. Then the stochastic finite element
method for model II has the following matrix form
\begin{equation}\label{eqn:linear_sys_I}
  A_{\sII}\bu_{\sII}=\mathbf{f}.
\end{equation}
Based on structure of the uncertainty propagator of model
II, we know that $A_{\sII}$ is a block lower triangular
matrix
\begin{equation}
  A_{\sII}=\left(
    \begin{array}{cccc}
      A_{\sII,11}& 0 &\ldots& 0\\
      A_{\sII,21}&A_{\sII,22}&\ldots& 0\\
      \vdots&\vdots&\ddots&\vdots\\
      A_{\sII,N_{M,p}1}&A_{\sII,N_{M,p}2}&\ldots&A_{\sII,N_{M,p}N_{M,p}}
    \end{array}
  \right),
\end{equation}
where the blocks $A_{\sII,ij}$ is defined as
\begin{equation}
  A_{\sII,ij}=S_{\ibg(i)-\iba(j)},\quad i\geq j.
\end{equation}
with
\begin{equation}
  (S_{\ibg(i)-\iba(j)})_{m,n}=\int_D\hat{a}_{\ibg(i)-\iba(j)}(\bx)\nabla\theta_m(\bx)
  \cdot\nabla\theta_n(\bx)d\bx.
\end{equation}
Note that
\begin{equation}
  A_{\sII,11}=A_{\sII,22}=\ldots=A_{\sII,N_{M,p},N_{M,p}}=S_{\boldsymbol{(0)}}.
\end{equation}
\begin{lemma}
  Consider the stiffness matrices $A_{\sI}$ and
  $A_{\sII}$. We have that the condition number
  \begin{equation}
    \kappa\left(A_{\sII}^{-1}A_{\sI}\right)\leq 1+\mathcal{O}(\sigma^2).
  \end{equation}
\end{lemma}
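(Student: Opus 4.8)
The plan is to show that $A_{\sII}^{-1}A_{\sI}$ is similar to a matrix of the form $I+\mathcal{O}(\sigma^2)$, so that all of its eigenvalues lie in a disk of radius $\mathcal{O}(\sigma^2)$ about $1$; since the condition number in the relevant (spectral) sense is invariant under similarity, this immediately yields $\kappa(A_{\sII}^{-1}A_{\sI})=\max|\lambda|/\min|\lambda|\le(1+\mathcal{O}(\sigma^2))/(1-\mathcal{O}(\sigma^2))=1+\mathcal{O}(\sigma^2)$. Both matrices are viewed as $N_{M,p}\times N_{M,p}$ arrays of $N_x\times N_x$ blocks indexed by $\iba,\ibg\in\cJ_{M,p}$: by Lemma~\ref{lem:EaHaHb} the $(\ibg,\iba)$ block of $A_{\sI}$ is assembled from $\bE[a\Hep_{\iba}\Hep_{\ibg}]=\sum_{\ibk\le\iba\wedge\ibg}\chi(\iba,\ibg,\ibk)\Phi^{\iba+\ibg-2\ibk}$ (full and symmetric), while $A_{\sII}$ is block lower triangular with $(\ibg,\iba)$ block built from $\hat a_{\ibg-\iba}=e^{-\sigma^2}\Phi^{\ibg-\iba}/(\ibg-\iba)!$ for $\iba\le\ibg$ and zero otherwise.

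The central device is the block-diagonal similarity $S=\mathrm{diag}(\sigma^{|\iba|}I_{N_x})_{\iba\in\cJ_{M,p}}$, chosen to match the natural decay $u_{\iba}=\mathcal{O}(\sigma^{|\iba|})$ of the chaos coefficients that underlies \eqref{eqn:u13_sigma}. I set $\tilde A_{\sI}=S^{-1}A_{\sI}S$ and $\tilde A_{\sII}=S^{-1}A_{\sII}S$, which rescales the $(\ibg,\iba)$ block by $\sigma^{|\iba|-|\ibg|}$ and preserves eigenvalues. Since every component of $\Phi$ is $\mathcal{O}(\sigma)$, the leading term $\ibk=\iba\wedge\ibg$ of the $A_{\sI}$ block carries $\Phi^{\iba+\ibg-2(\iba\wedge\ibg)}=\mathcal{O}(\sigma^{|\iba|+|\ibg|-2|\iba\wedge\ibg|})$, so after rescaling the lower-triangular part ($\iba\le\ibg$) becomes $\mathcal{O}(1)$ while the upper-triangular part ($\iba\not\le\ibg$) becomes $\mathcal{O}(\sigma^2)$; the same rescaling turns $\tilde A_{\sII}$ into a block lower-triangular matrix with uniformly $\mathcal{O}(1)$ entries and invertible $\mathcal{O}(1)$ diagonal blocks (the scaled stiffness $S_{(0)}$), so that $\tilde A_{\sII}^{-1}=\mathcal{O}(1)$ for fixed truncation.

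Next I would verify that $\tilde A_{\sI}-\tilde A_{\sII}=\mathcal{O}(\sigma^2)$ uniformly across all blocks. On the lower-triangular part the leading coefficient $\chi(\iba,\ibg,\iba)\Phi^{\ibg-\iba}=\tfrac{\ibg!}{(\ibg-\iba)!}\Phi^{\ibg-\iba}$ of $A_{\sI}$ reproduces exactly the $A_{\sII}$ convolution coefficient up to the scalar $e^{-\sigma^2}=1+\mathcal{O}(\sigma^2)$, and the subleading terms $\ibk<\iba\wedge\ibg$ raise the power of $\Phi$ by at least $2$, hence contribute $\mathcal{O}(\sigma^2)$ after rescaling; on the diagonal the two leading factors agree (both equal $\ibg!$ times the Laplacian stiffness, since $a_0=1$ and $\hat a_{(0)}=e^{-\sigma^2}$), leaving an $\mathcal{O}(\sigma^2)$ remainder; and the scaled upper-triangular part is already $\mathcal{O}(\sigma^2)$ by the previous paragraph. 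Combining, $\tilde A_{\sII}^{-1}\tilde A_{\sI}=I+\tilde A_{\sII}^{-1}(\tilde A_{\sI}-\tilde A_{\sII})=I+\mathcal{O}(\sigma^2)$, whose spectrum lies within $\mathcal{O}(\sigma^2)$ of $1$, and the claimed bound follows.

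The main obstacle is precisely that the unscaled difference $A_{\sI}-A_{\sII}$ is only $\mathcal{O}(\sigma)$: its block upper-triangular part mirrors, by symmetry of $A_{\sI}$, the $\mathcal{O}(\sigma)$ first-order coupling already captured in the lower-triangular Wick structure of $A_{\sII}$, so a naive perturbation bound would give only $1+\mathcal{O}(\sigma)$. The point I must make carefully is that this $\mathcal{O}(\sigma)$ discrepancy is block strictly upper triangular at leading order, hence nilpotent, and so cannot move the eigenvalues; the scaling $S$ makes this rigorous in a single stroke by converting the nilpotent $\mathcal{O}(\sigma)$ part into an honest $\mathcal{O}(\sigma^2)$ perturbation, thereby bypassing the delicate issue that perturbing a defective matrix (a large Jordan block with eigenvalue $1$) could otherwise amplify an $\mathcal{O}(\sigma^2)$ perturbation into a larger eigenvalue displacement. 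I would also flag explicitly that the condition number in the statement is to be read in the spectral (eigenvalue-ratio) sense, since the $2$-norm condition number of the unsymmetrized product is not similarity invariant and need not be $1+\mathcal{O}(\sigma^2)$.
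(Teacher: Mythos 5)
Your proposal is correct in substance, but it is a genuinely different proof from the one in the paper, and the comparison is instructive. The paper never looks at the block structure of the two matrices: it invokes the solution-difference estimate behind Theorem \ref{thm:diff_models} to assert $\|A_{\sI}^{-1}\mathbf{f}-A_{\sII}^{-1}\mathbf{f}\|=\mathcal{O}(\sigma^2)$ ``for any $\mathbf{f}$'', concludes $\|A_{\sI}^{-1}-A_{\sII}^{-1}\|=\mathcal{O}(\sigma^2)$, and then bounds the norm-based quantity $\kappa=\|A_{\sII}^{-1}A_{\sI}\|\,\|A_{\sI}^{-1}A_{\sII}\|$ by inserting $A_{\sI}^{-1}$ and $A_{\sII}^{-1}$ and using submultiplicativity, ending at $1+\|A_{\sI}\|\|A_{\sII}\|\bigl(\mathcal{O}(\sigma^2)+\mathcal{O}(\sigma^4)\bigr)$. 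You instead work with the forward matrices, matching blocks through Lemma \ref{lem:EaHaHb} against the Wick convolution structure, and you neutralize the $\mathcal{O}(\sigma)$ mismatch with the similarity $S=\mathrm{diag}(\sigma^{|\iba|}I_{N_x})$ before bounding the eigenvalue-ratio condition number. Your route buys more than elegance: the obstacle you flag --- that $A_{\sI}-A_{\sII}$, and equally $A_{\sI}^{-1}-A_{\sII}^{-1}$, is only $\mathcal{O}(\sigma)$ --- is exactly the weak point of the paper's argument. Theorem \ref{thm:diff_models} concerns deterministic forcing, i.e.\ right-hand-side vectors supported on the $\ibg=\boldsymbol{(0)}$ block; for $\mathbf{f}$ supported on a higher chaos block the two discrete solutions differ at first order in $\sigma$, so the operator norm $\|A_{\sI}^{-1}-A_{\sII}^{-1}\|$ is really $\mathcal{O}(\sigma)$, and the norm-based route, carried out strictly, yields only $1+\mathcal{O}(\sigma)$. (Already for $M=1$, $p=1$ and a single spatial degree of freedom, $A_{\sII}^{-1}A_{\sI}$ is $e^{\sigma^2}$ times a unit upper-triangular matrix with an $\mathcal{O}(\sigma)$ off-diagonal entry: its eigenvalue ratio is $1+\mathcal{O}(\sigma^2)$, while its $2$-norm condition number is $1+\mathcal{O}(\sigma)$.) So your insistence on the spectral reading of $\kappa$, made effective by the scaling that converts the nilpotent $\mathcal{O}(\sigma)$ part into an honest $\mathcal{O}(\sigma^2)$ perturbation of the identity, is what actually secures the stated rate; eigenvalue clustering is also the quantity relevant to the Richardson and GMRES discussion following the lemma.

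One point you must state explicitly rather than slide over: in your first paragraph you take the blocks of $A_{\sII}$ literally from the paper, built from $\hat a_{\ibg-\iba}=e^{-\sigma^2}\Phi^{\ibg-\iba}/(\ibg-\iba)!$ (the propagator \eqref{eqn:u_III_semi}, i.e.\ after division by $\ibg!$), but your leading-order matching uses the blocks $\ibg!\,S_{\ibg-\iba}$ --- equivalently, Galerkin assembly of both models in one and the same chaos basis --- since you identify $\chi(\iba,\ibg,\iba)\Phi^{\ibg-\iba}=\frac{\ibg!}{(\ibg-\iba)!}\Phi^{\ibg-\iba}$ with the model-II coefficient and declare both diagonal blocks to be $\ibg!$ times the Laplacian stiffness. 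That consistent normalization is the only reading under which the lemma can hold: with the paper's literal pair of matrices the diagonal blocks of $A_{\sII}^{-1}A_{\sI}$ behave like $\ibg!\,I$, and no bound of the form $1+\mathcal{O}(\sigma^2)$ is possible in any sense. With that convention fixed, your argument goes through.
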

\begin{proof}
  Since the difference between $u_{\sI}$ and $u_{\sII}$ is
  of $\mathcal{O}(\sigma^2)$, we have in the matrix form
  \begin{equation}
    \|\bu_{\sI}-\bu_{\sII}\|=
    \|A_{\sI}^{-1}\mathbf{f}-A_{\sII}^{-1}\mathbf{f}\|=\mathcal{O}(\sigma^2),
\end{equation}
which holds for any $\mathbf{f}$. Hence
\begin{equation}
  \|A_{\sI}^{-1}-A_{\sII}^{-1}\|=\mathcal{O}(\sigma^2).
\end{equation}

Then the condition number of
$A_{\sII}^{-1}A_{\sI}$ is
\begin{align}
  \kappa&=\|A_{\sII}^{-1}A_{\sI}\|\|A_{\sI}^{-1}A_{\sII}\|\nonumber\\
        &=\|\left(A_{\sII}^{-1}-A_{\sI}^{-1}+A_{\sI}^{-1}\right)A_{\sI}\|
          \|\left(A_{\sI}^{-1}-A_{\sII}^{-1}+A_{\sII}^{-1}\right)A_{\sII}\|\nonumber\\
        &=\|I+\left(A_{\sII}^{-1}-A_{\sI}^{-1}\right)A_{\sI}\|
          \|I+\left(A_{\sI}^{-1}-A_{\sII}^{-1}\right)A_{\sII}\|\nonumber\\
        &\leq 1 + \|A_{\sI}\|\|A_{\sII}\|\left(\mathcal{O}(\sigma^2)+\mathcal{O}(\sigma^4)\right).
\end{align}
\end{proof}
\begin{remark}
  When $\sigma$ is relatively small, we expect that
  $A_{\sII}$ can provide a good preconditioner for linear
  system \eqref{eqn:linear_sys_I}. Instead of solving
  equation \eqref{eqn:linear_sys_I}, we can solve
  \begin{equation}
    A_{\sII}^{-1}A_{\sI}\bu_{\sI}=A_{\sII}^{-1}\mathbf{f}.
  \end{equation}
\end{remark}

\subsubsection{Preconditioned Richardson's iteration}
One commonly used iterative method for the uncertainty
propagator \eqref{eqn:u_I_semi} of model I is the block
Gauss-Seidel method, which can be expressed as
\begin{eqnarray}
  &&-\nabla\cdot\left(
     \bE\left[a(\bx,\omega)\Hep_{\ibg}^2\right]
     \nabla u_{\ibg}^{\sI,n+1}(\bx)\right)\nonumber\\
  &=&\sum_{i=1}
      ^{k(\ibg)-1}\nabla\cdot\left(
      \bE\left[a(\bx,\omega)\Hep_{\iba(i)}\Hep_{\ibg}\right]
      \nabla u_{\iba(i)}^{\sI,n+1}(\bx)\right)\\
  &+&\sum_{i=k(\ibg)+1}^{N_{M,p}}\nabla\cdot\left(
      \bE\left[a(\bx,\omega)\Hep_{\iba(i)}\Hep_{\ibg}\right]
      \nabla u_{\iba(i)}^{\sI,n}(\bx)\right)+f(\bx)\delta_{
      \boldsymbol{(0)},\ibg},\quad\forall\, \ibg\in\cJ_{M,p},\nonumber
\end{eqnarray}
where the superscript $n$ indicates the iteration step. It
is shown in Lemma \ref{lem:EaHaHb} that
$\bE\left[a(\bx,\omega)\Hep_{\ibg}^2\right]$ is strictly
positive. We know that the block Gauss-Seidel method
corresponds to a fixed point iteration on a preconditioned
system
\[
  M^{-1}A_{\sI}\bu_{\sI}=M^{-1}\mathbf{f},
\]
where $M$ is the lower-triangular part of matrix
$A_{\sI}$. Based on the comparability of models I and II, we
can construct the following preconditioned Richardson's
iterative method \cite{Saad03}:
\begin{equation}
  \bu_{\sI}^{(k+1)}=\bu_{\sI}^{(k)}+\gamma 
  A_{\sII}^{-1}(A_{\sI}\bu_{\sI}^{(k)}-\mathbf{f}),
\end{equation}
where $\gamma$ is the non-negative acceleration
parameter. We know that the Richardson's iterative method
converges when $\gamma<2/\rho(A_{\sII}^{-1}A_{\sI})$, where
$\rho(\cdot)$ indicates the spectral radius of a matrix.
Based on the relation between $A_{\sI}$ and $A_{\sII}$, we
expect that $\rho((A_{\sII})^{-1}A_{\sI})$ is close to 1
when $\sigma$ is relatively small.

\subsubsection{Preconditioned GMRES method}
We {also} consider Krylov subspace methods. Since $A_{\sI}$
is symmetric and positive definite, a common choice to solve
the linear system is preconditioned Conjugate Gradient (CG)
method. We here consider to use $A_{\sII}$ as a
preconditioner, which is not symmetric. Hence we use a
preconditioned GMRES method \cite{Saad03} instead of CG
method.

\section{Numerical results}\label{sec:results}
We consider both one-dimensional and two-dimensional
($D = [-1,1]^d, d=1,2$) elliptic problem with random
coefficient subject to a non-zero force term
\begin{equation}
  f(\bx) = \prod_{i=1}^d (x_i^2+4x_i+1) e^{x_i}
\end{equation}
and homogeneous boundary conditions. Assume the underlying
Gaussian random field of the log-normal coefficient
$a(\bx,\omega)=e^{\sigma G(\bx,\omega)-\frac12\sigma^2}$,
with $G$'s correlation function is given by:
\begin{equation}
  K(\bx_1,\bx_2)=e^{-\frac{|\bx_1-\bx_2|^2}{2l_c^2}},
\end{equation} 
or
\begin{equation}
  K(\bx_1,\bx_2)=e^{-\frac{|\bx_1-\bx_2|}{l_c}},
\end{equation}  
where $l_c$ being the correlation length and $\sigma$ the
standard deviation. Due to the analyticity of the Gaussian
kernel, the eigenvalues decay exponentially
\cite{Schwab_CMAME05}. The decay rate is determined by the
value of the correlation length, where a larger $l_c$
corresponds to a faster decay rate. The physical
discretization is given by $25$ uniform finite element with
order $q=4$ for the one-dimensional case, and 32$\times$32
uniform quadratic finite elements for the two-dimensional
cases.  We test the parameters $\sigma=0.2, 0.6, 1$ and
$l_c=20, 2, 0.2$.  The solution differences of model I and
model II is similar to the results in \cite{Wan_model2} and
\cite{Wan_model3}. So we only sketch the results for
two-dimensional case here.

The results for 2-dimensional case with Gaussian type kernel
are given in Fig \ref{fig:IvIInormal_L20-2d},
\ref{fig:IvIInormal_L2-2d}, \ref{fig:IvIInormal_L02-2d}
\Yu{for $l_c=20, 2, 0.2$, respectively}. The results for
2-dimensional exponential kernel \Yu{ with $l_c=20, 2, 0.2$}
are given in Fig \ref{fig:IvIIexp_L20-2d},
\ref{fig:IvIIexp_L2-2d}, \ref{fig:IvIIexp_L02-2d},
respectively. \Yu{The truncation errors of the K-L expansion
  for the Gaussian kernel and exponential kernel are set to
  be $2\times 10^{-3}$ and $3\times 10^{-2}$,
  respectively}. For
Model I, if the dimension of the stochastic space
$M$ is less than 20, we use stochastic Galerkin method, otherwise we use Monte Carlo method.  From these
figures, we say for small $\sigma$ values, the results of
Model II agree very well with the results of Model I. A
larger correlation length $\ell_c$ also makes a better
agreement between the results of Model I and Model II. This
is consistent to the theoretical results.
\begin{figure}[htp]
  \begin{centering}
    \includegraphics[width=0.35\textwidth]{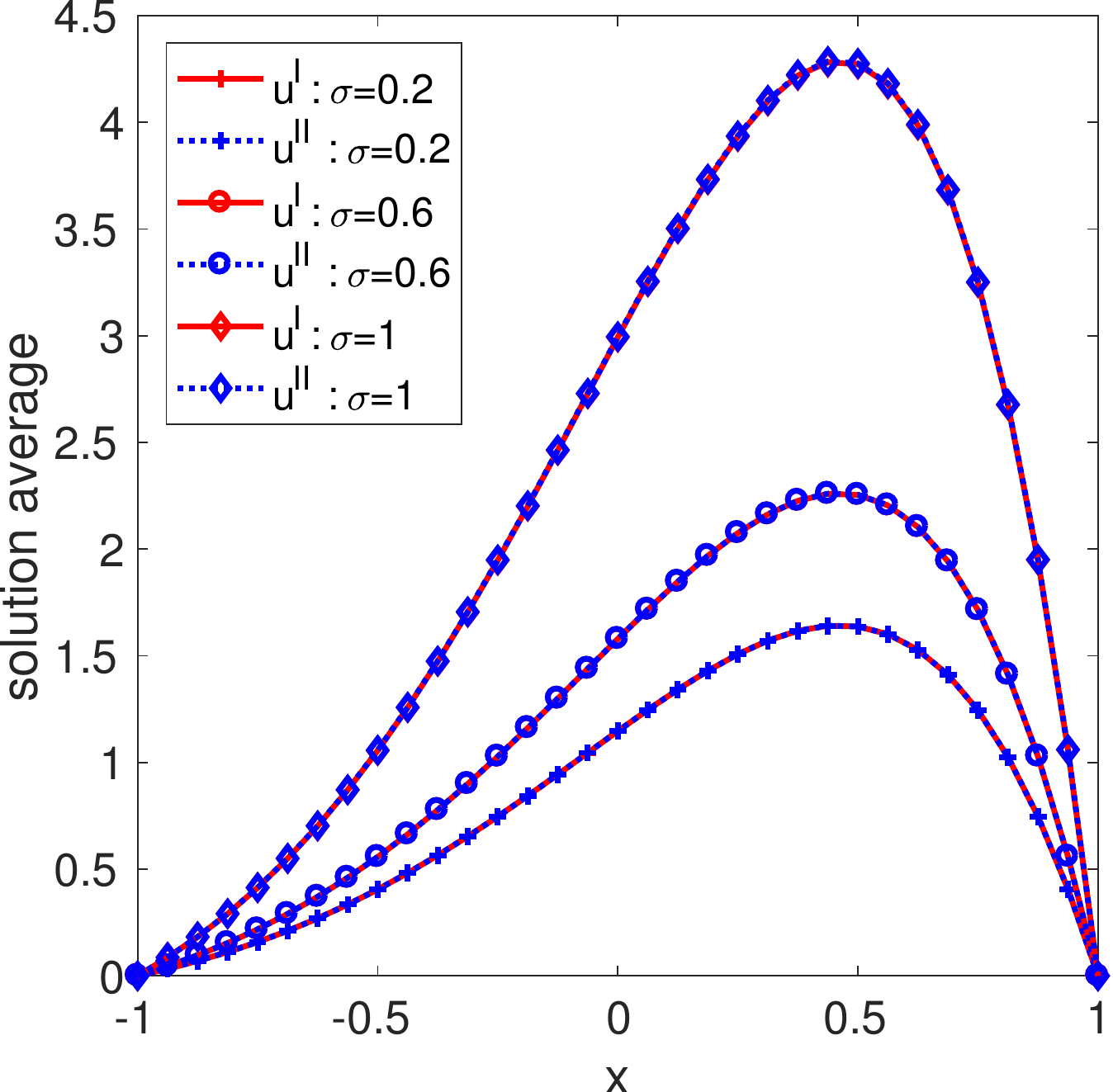}
    \hspace{4ex}
    \includegraphics[width=0.34\textwidth]{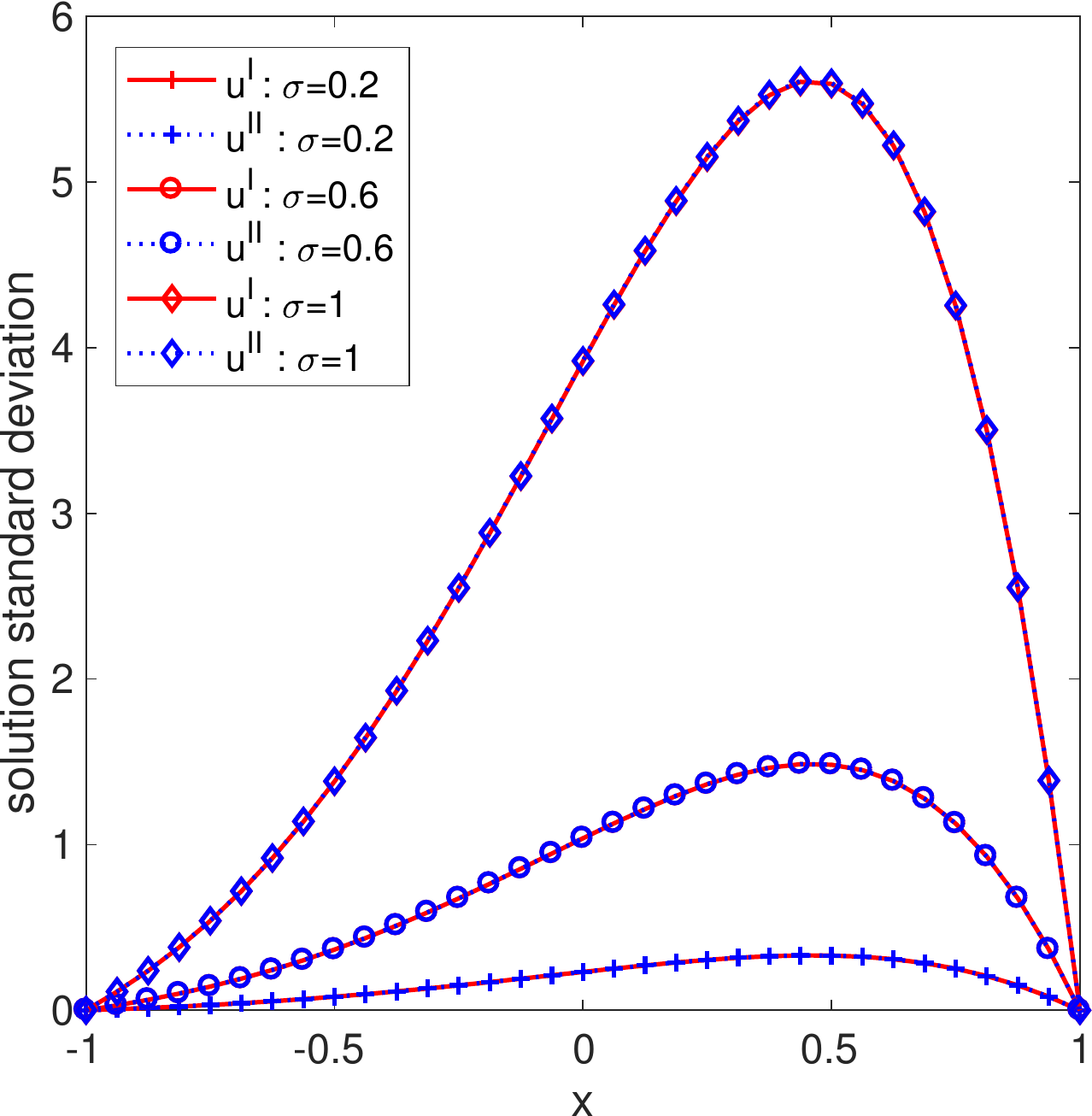}
    \par\end{centering}
  \caption{\label{fig:IvIInormal_L20-2d}The average (left)
    and standard deviation (right) of model I and II at the
    horizontal line $y=0$: Gaussian kernel with
    $\ell_{c}=20$, $M=1$, {and} $p=16$ {are} used for the stochastic
    Galerkin approximation of both Model I and Model II. }
\end{figure}
\begin{figure}[htp]
	\begin{centering}
      \includegraphics[width=0.35\textwidth]{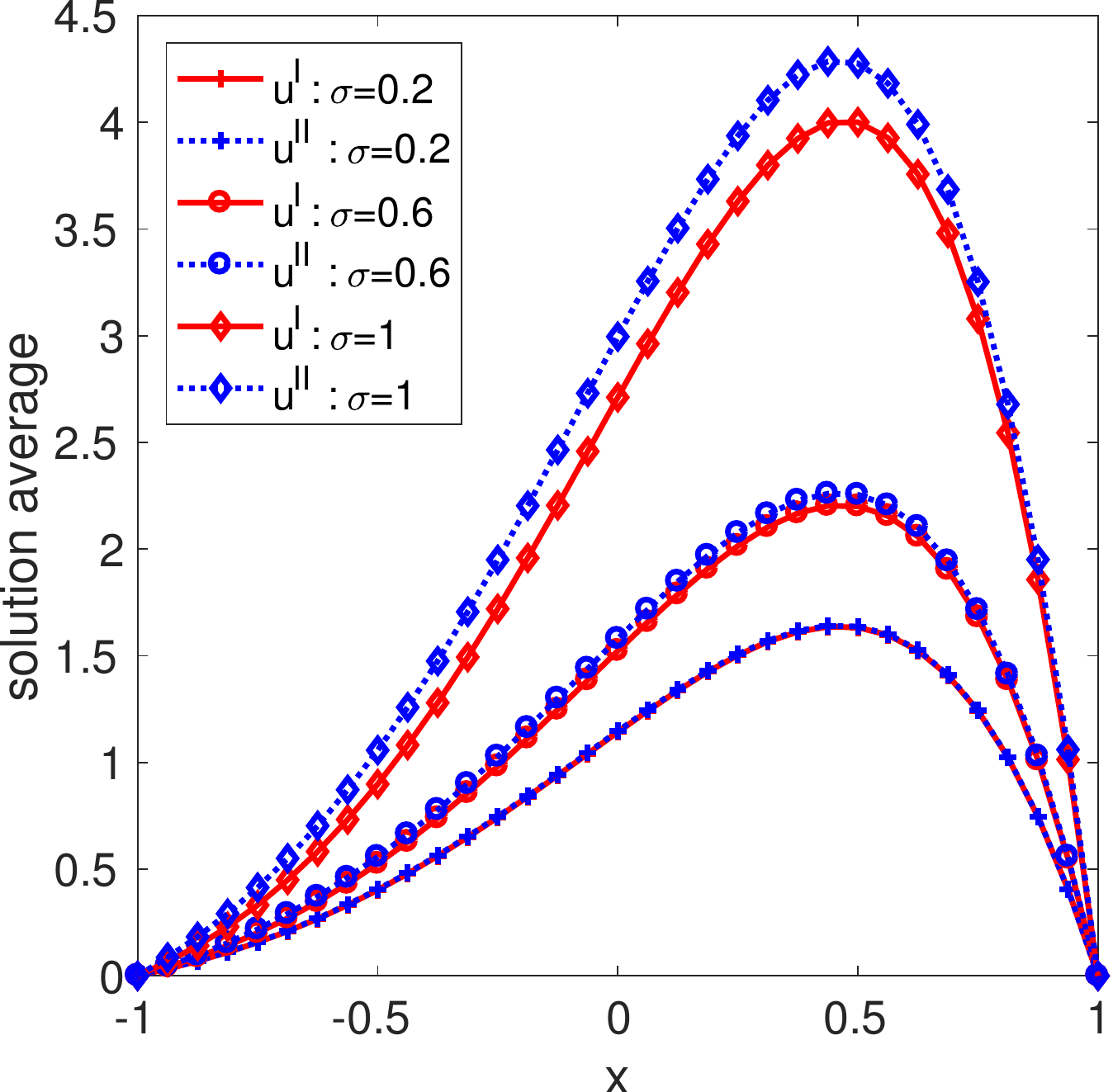}
      \hspace{4ex}
      \includegraphics[width=0.34\textwidth]{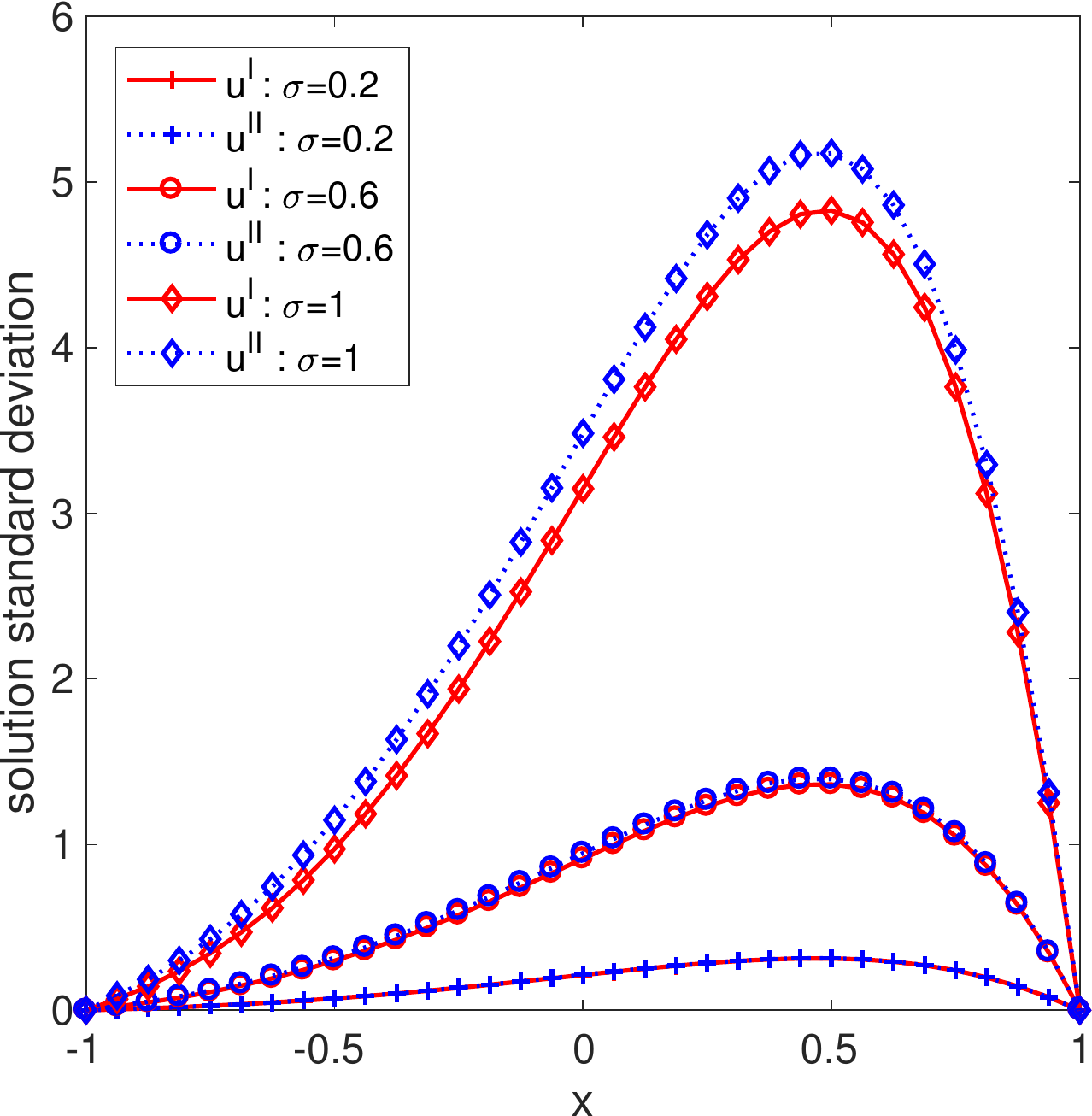}
      \par\end{centering}
	\caption{\label{fig:IvIInormal_L2-2d}The average (left)
      and standard deviation (right) of model I and II at
      the horizontal line $y=0$: Gaussian kernel with
      $\ell_{c}=2$, \Yu{$M=6$, and $p=6$} {are} used for the
      stochastic Galerkin approximation of both Model I and
      Model II. }
  \end{figure}
  \begin{figure}[htp]
	\begin{centering}
      \includegraphics[width=0.35\textwidth]{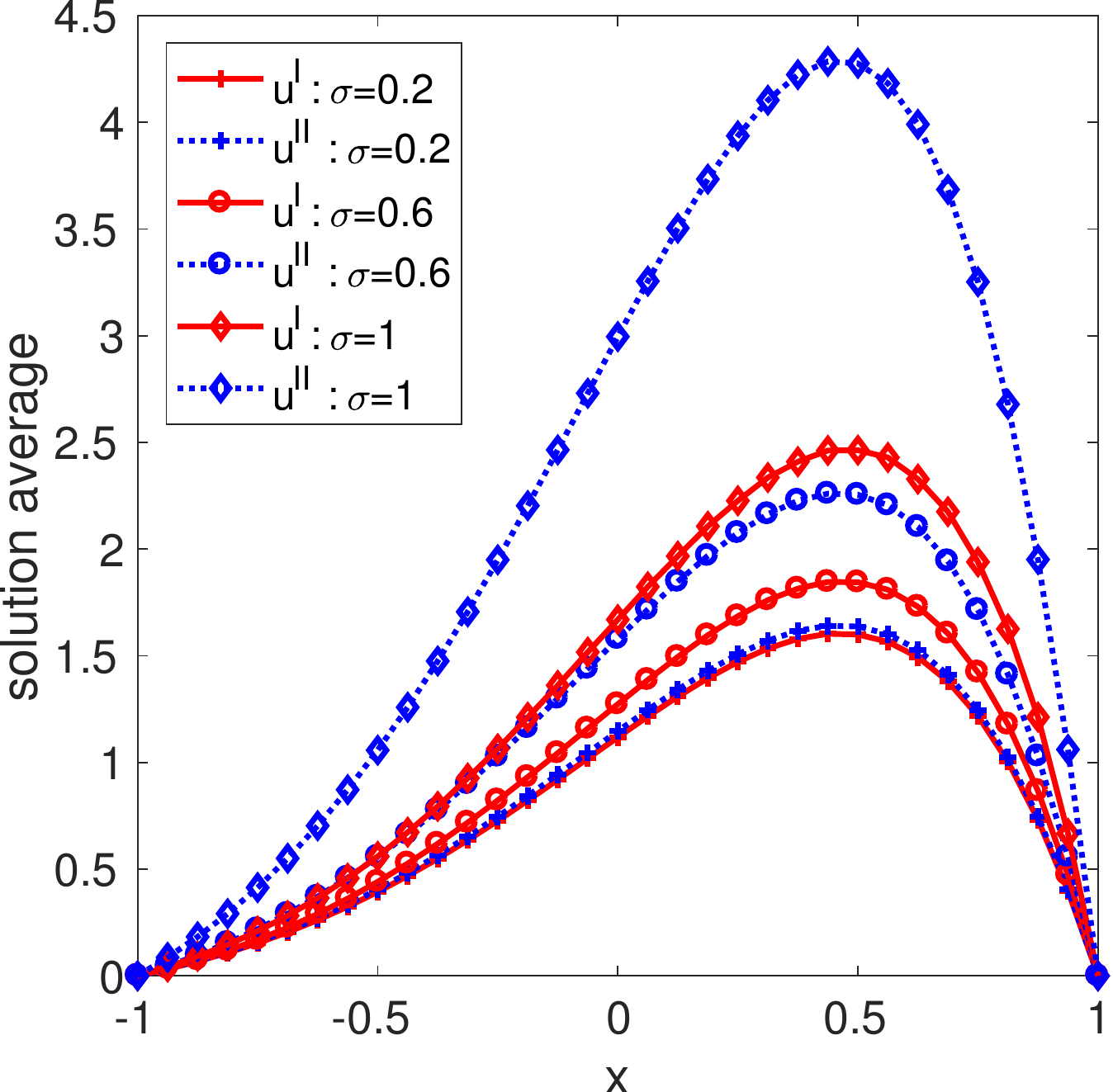}
      \hspace{4ex}
      \includegraphics[width=0.35\textwidth]{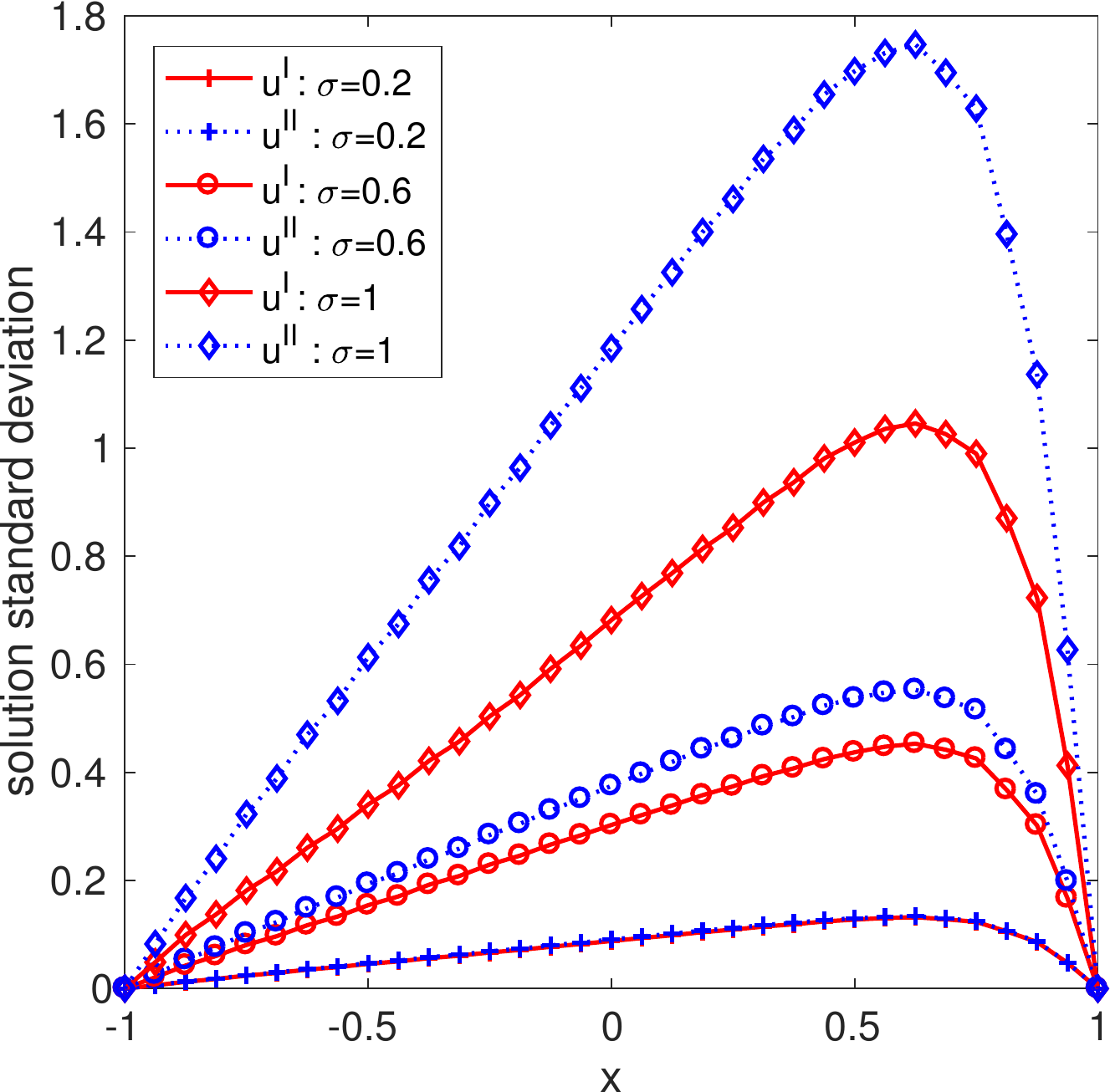}
      \par\end{centering}
	\caption{\label{fig:IvIInormal_L02-2d}The average (left)
      and standard deviation (right) of model I and II at
      the horizontal line $y=0$: Gaussian kernel with
      $\ell_{c}=0.2$, $\Yu{M=94}$, and $p=1$ {are} used for the
      stochastic Galerkin approximation of Model II.
      $\Yu{M=94}$ and $N_{mc}=10000$ {are} used for the Monte Carlo
      method of model I.}
  \end{figure}

  \begin{figure}
	\begin{centering}
      \includegraphics[width=0.35\textwidth]{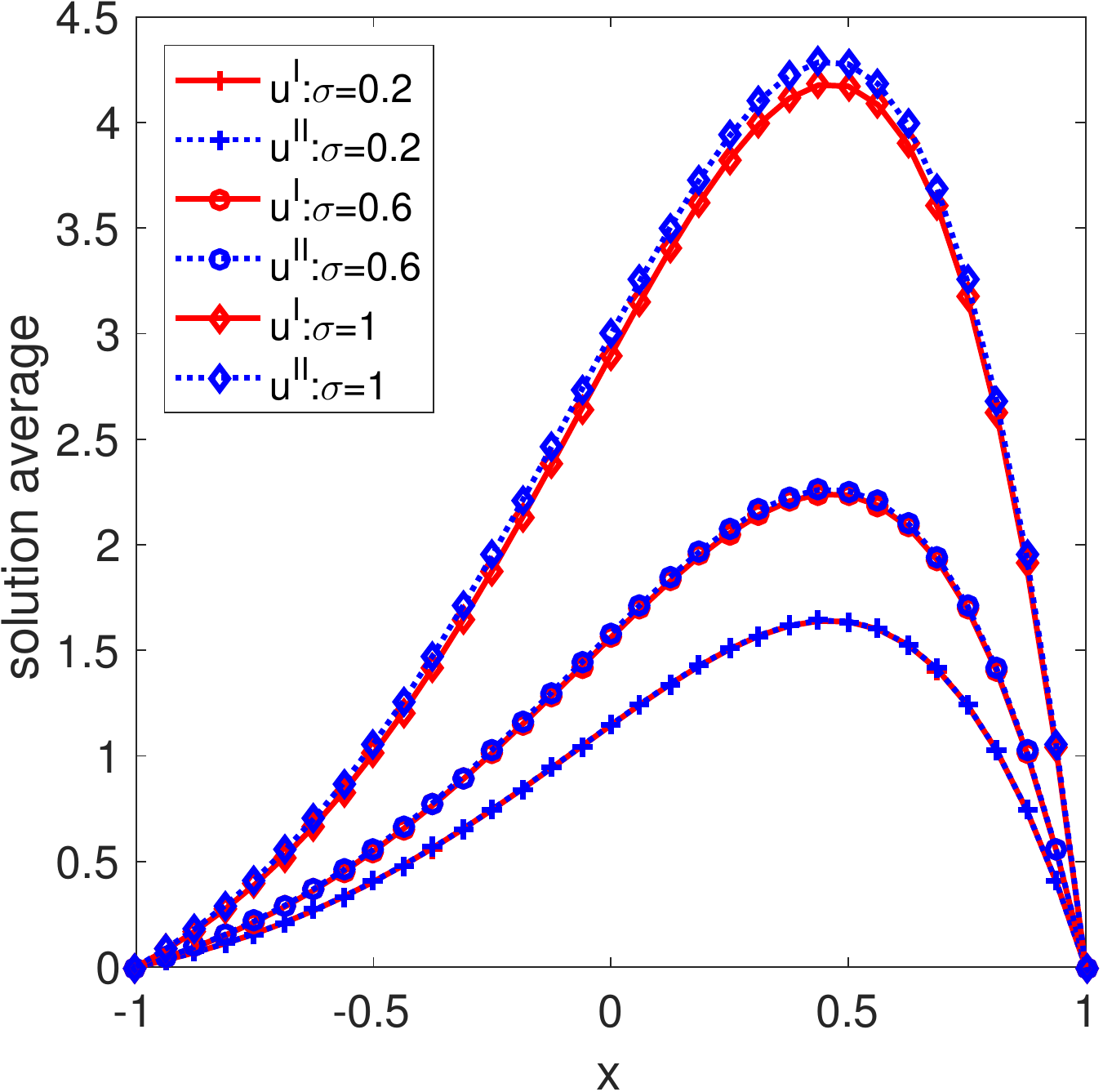}
      \hspace{4ex}
      \includegraphics[width=0.34\textwidth]{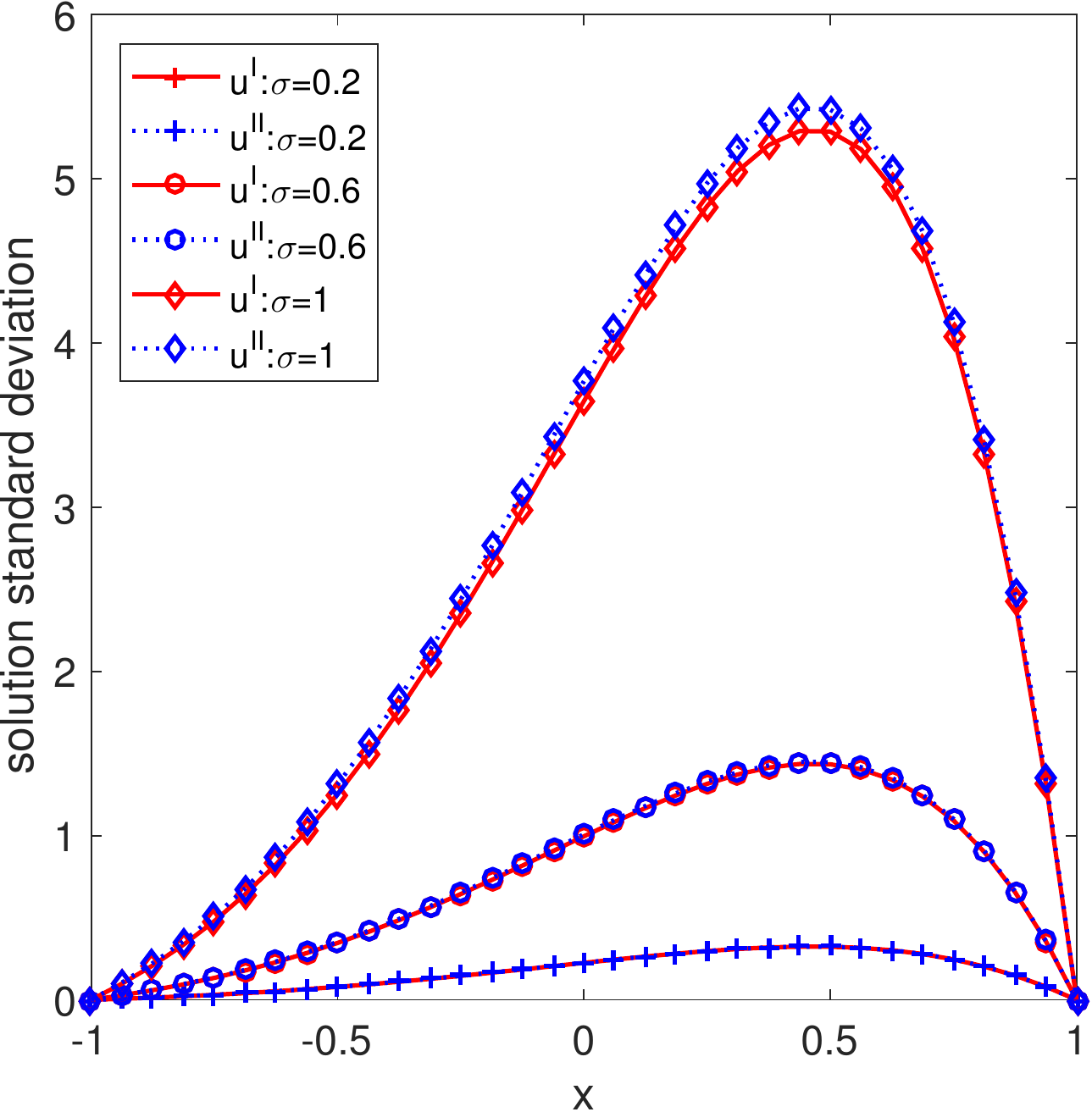}
      \par\end{centering}
	\caption{\label{fig:IvIIexp_L20-2d}The average (left)
      and standard deviation (right) of model I and II at
      the horizontal line $y=0$: exponential kernel with
      $\ell_{c}=20$, \Yu{$M=3$, and $p=8$} {are} used for the
      stochastic Galerkin approximation of both Model I and
      Model II.}
  \end{figure}
  \begin{figure}
	\begin{centering}
      \includegraphics[width=0.35\textwidth]{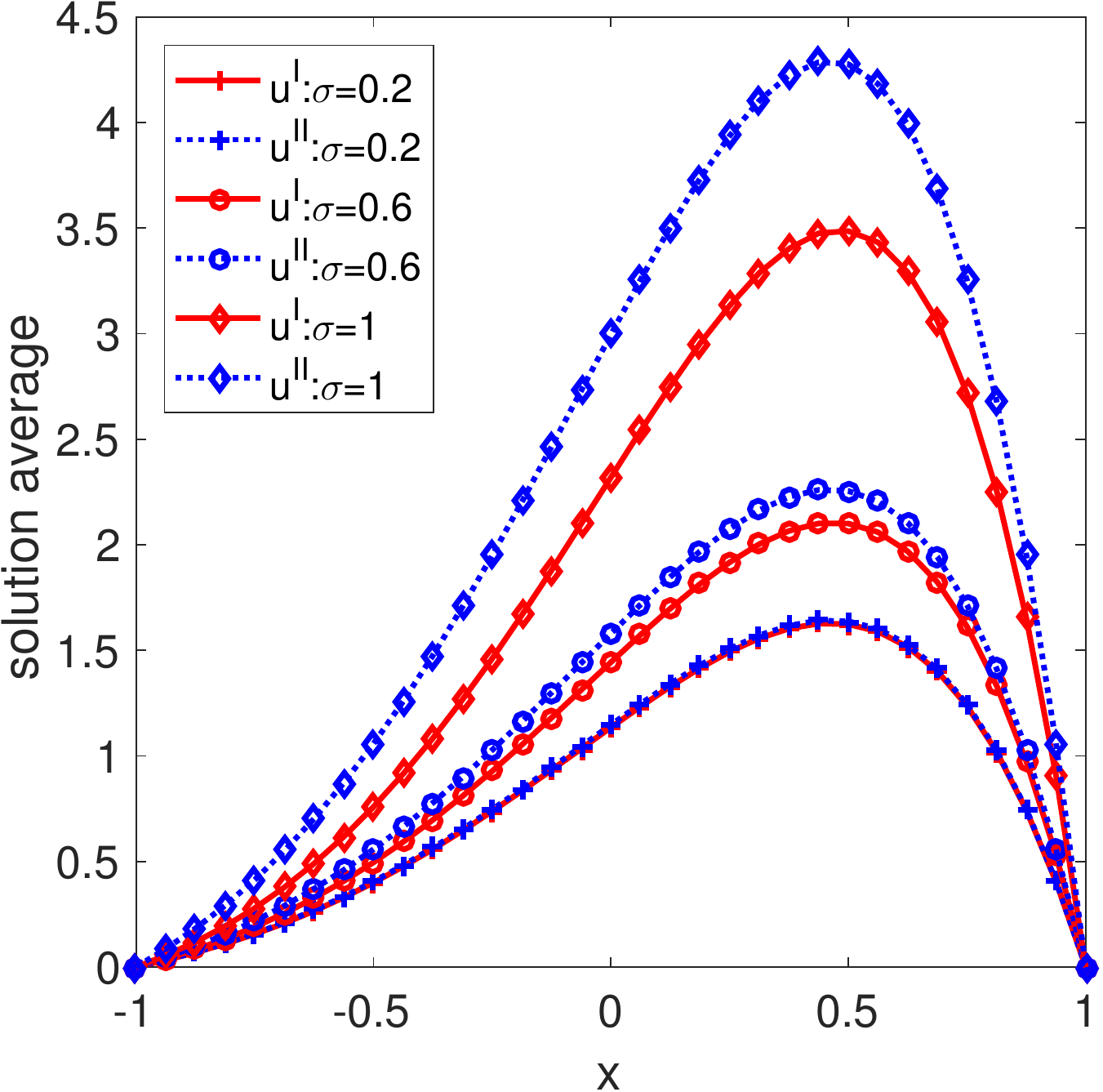}
      \hspace{4ex}
      \includegraphics[width=0.35\textwidth]{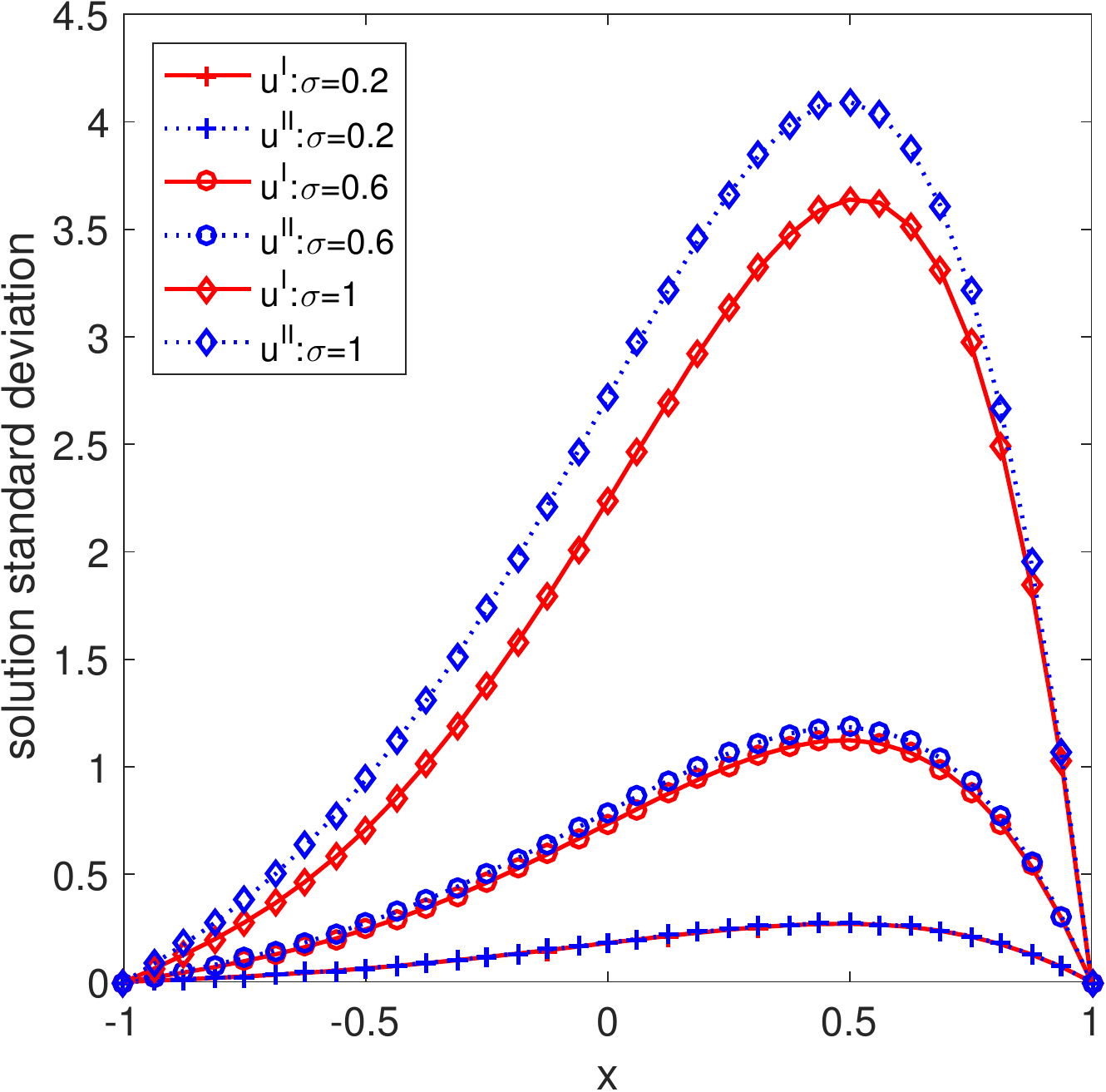}
      \par\end{centering}
	\caption{\label{fig:IvIIexp_L2-2d}The average (left) and
      standard deviation (right) of model I and II at the
      horizontal line $y=0$: exponential kernel with
      $\ell_{c}=2$, \Yu{$M=28$, and $p=2$} {are} used for the
      stochastic Galerkin approximation of Model II.
      $M=\Yu{28}$ and $N_{mc}=10000$ {are} used for the Monte Carlo
      method of model I.}
  \end{figure}
  \begin{figure}
	\begin{centering}
      \includegraphics[width=0.34\textwidth]{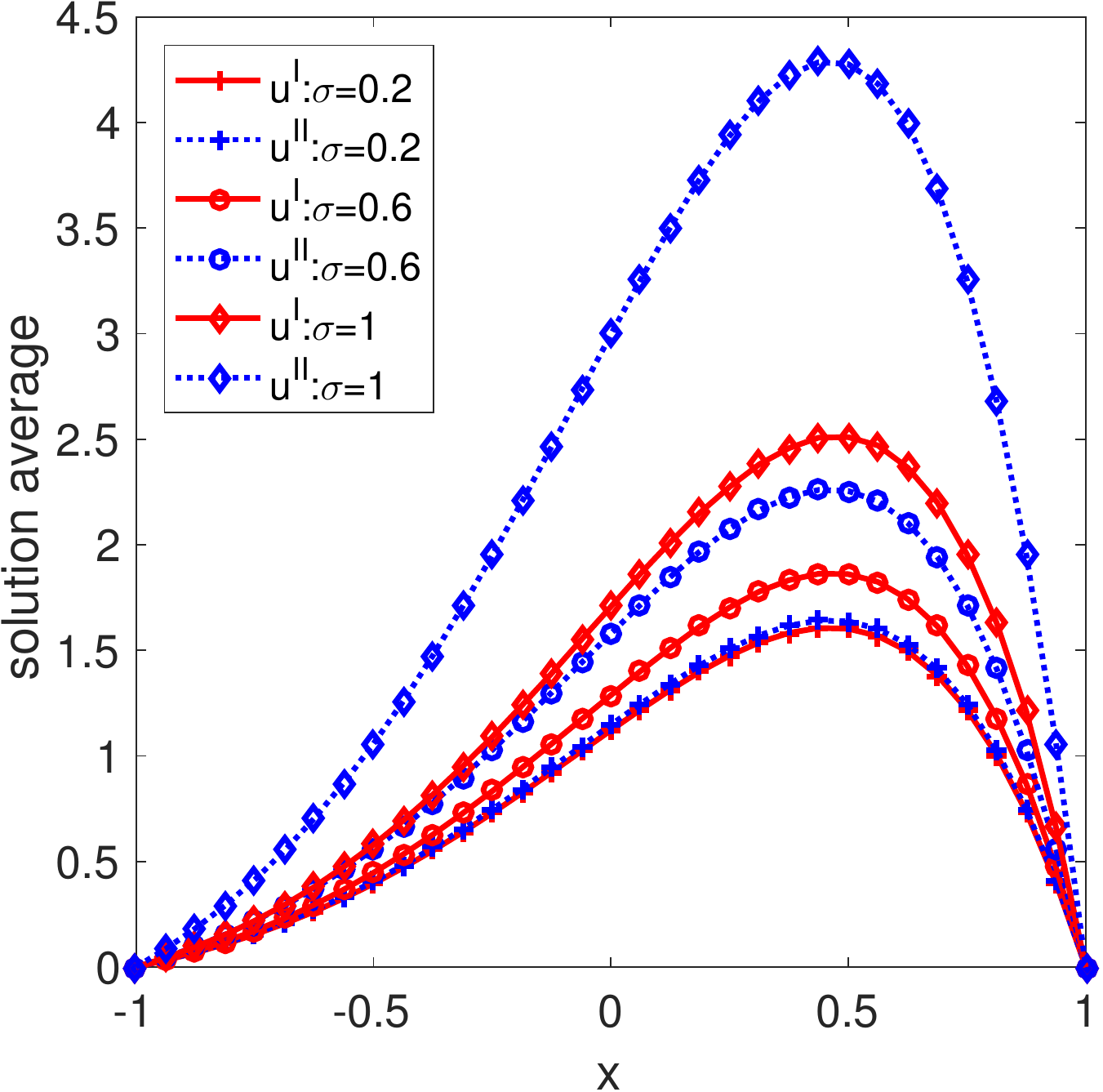}
      \hspace{4ex}
      \includegraphics[width=0.34\textwidth]{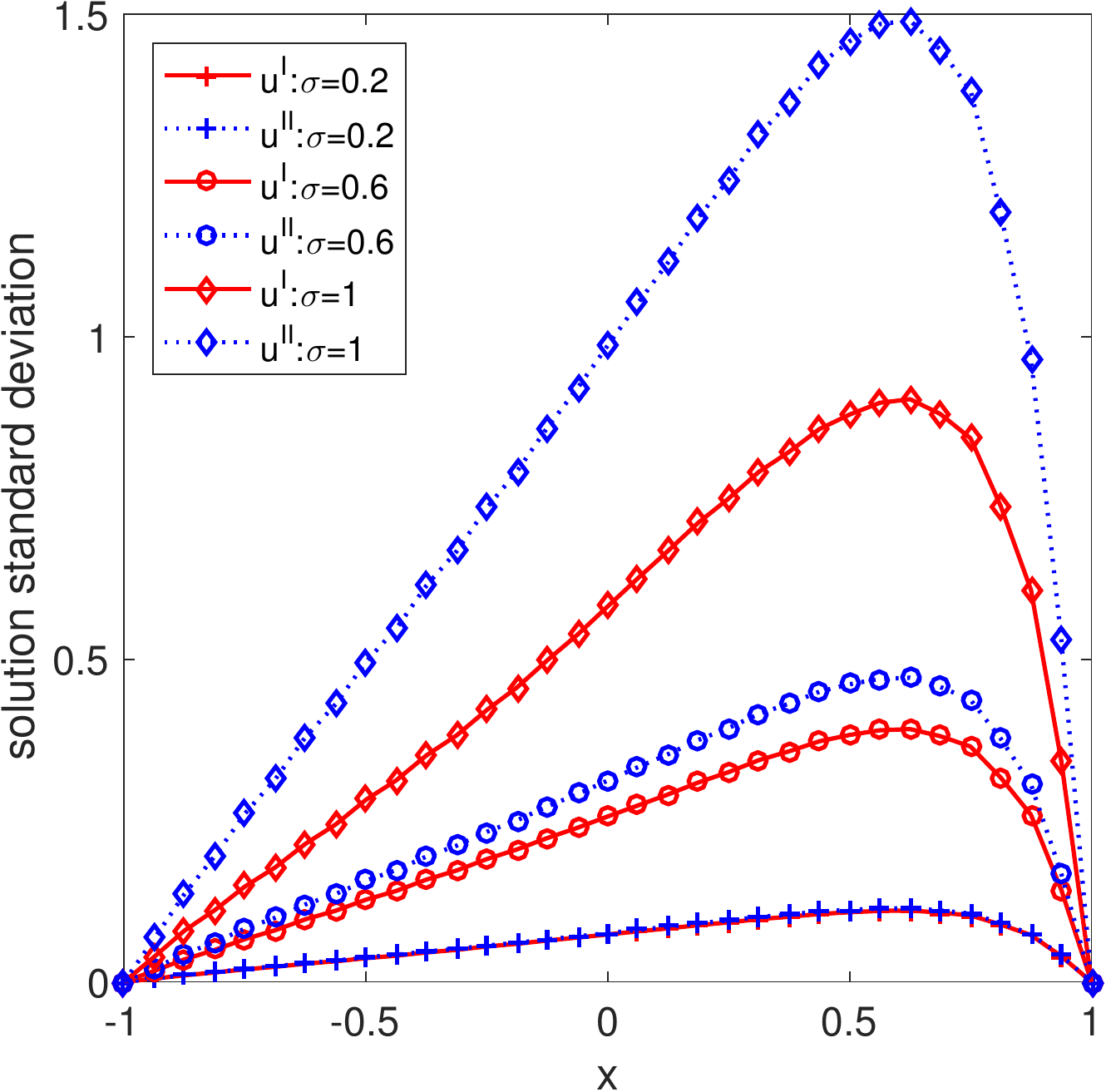}
      \par\end{centering}
	\caption{\label{fig:IvIIexp_L02-2d}The average (left)
      and standard deviation (right) of model I and II at
      the horizontal line $y=0$: exponential kernel with
      $\ell_{c}=0.2$, $M=\Yu{86}$ and $p=1$ {are} used for the
      stochastic Galerkin approximation of Model II.
      $M=\Yu{86}$ and $N_{mc}=10000$ {are} used for the Monte Carlo
      method of model I.}
  \end{figure}

  \subsection{Using $u_{\sII,h}$ as a control
    variate}

  When the correlation length is relatively small, a large
  number of random variables are required to represent the
  random coefficient and the Monte Carlo method would be a
  better choice for computation.  The mean and variance are
  given by the following unbiased estimators, respectively:

  \begin{align*}
    \bar{u}_{\sI,h}
    & =\frac{1}{N_{\tmc}}\sum_{i=1}^{N_{\tmc}}
      u_{\sI,h}(\bx,\ibxi^{(i)}),\\
    \var(u_{\sI,h})
    & \approx \frac{1}{N_{\tmc}-1}\sum_{i=1}^{N_{\tmc}}
      (u_{\sI,h}(\bx,\ibxi^{(i)})-\bar{u}_{\sI,h}(\bx))^2.
  \end{align*}

  \begin{figure}
    \begin{centering}
      {\includegraphics[width=0.4\textwidth]{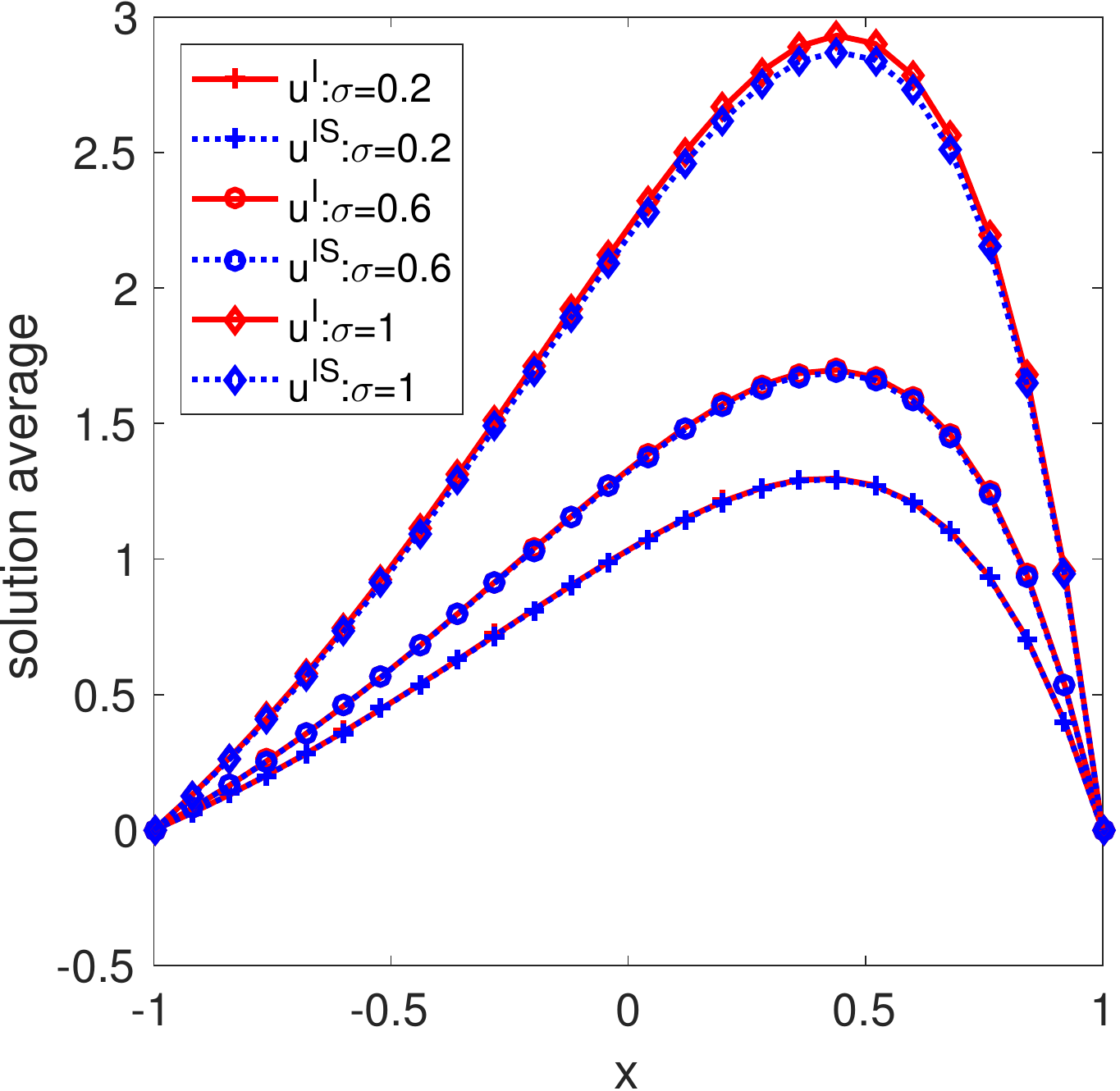}}$\quad$
      {\includegraphics[width=0.4\textwidth]{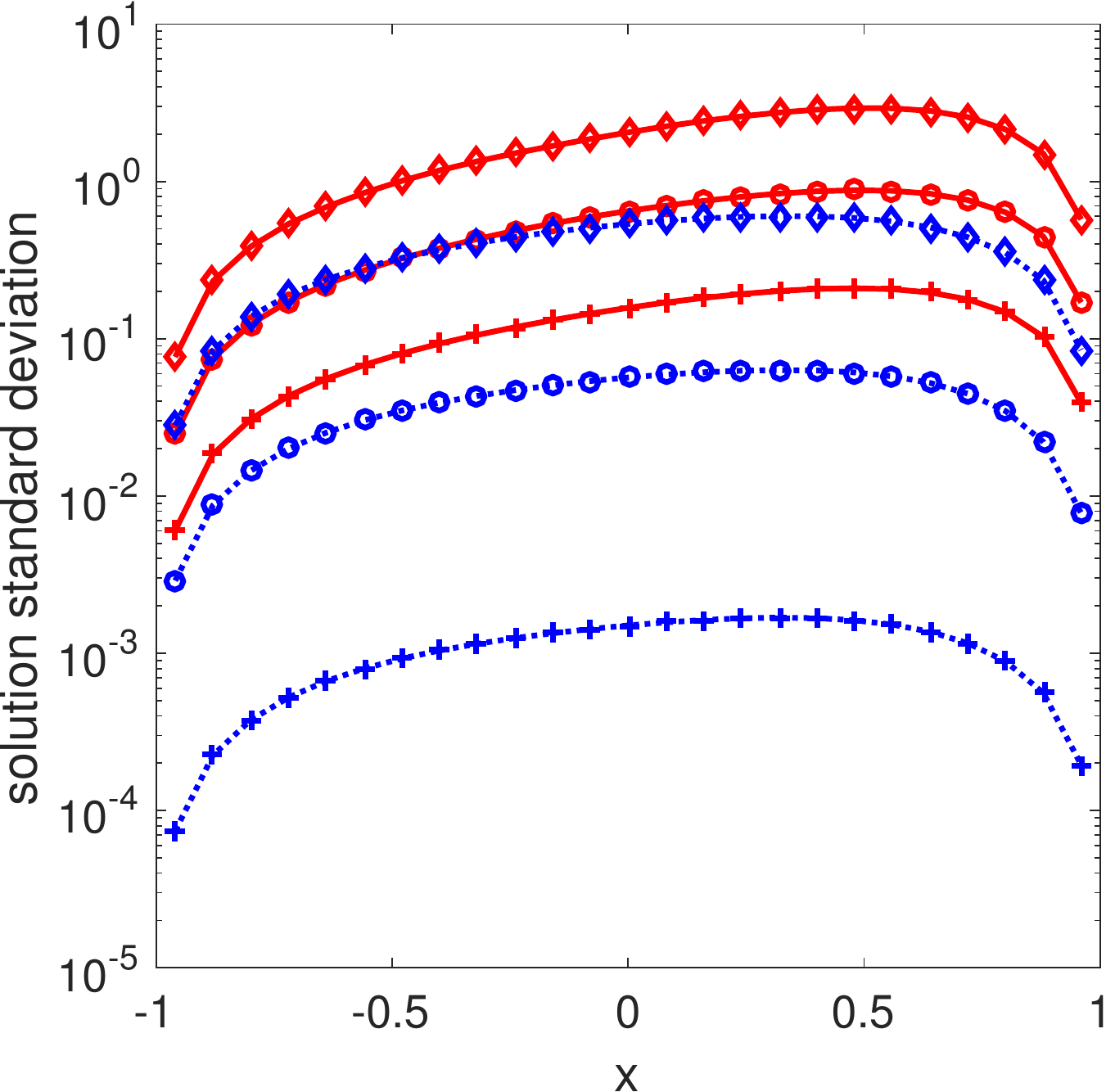}}\par
    \end{centering}
    \caption{\label{fig:IS_exp-1d} \Yu{The mean and standard
        deviation of the Monte Carlo method for model I with
        and without important sampling in 1-dimensional
        case. The exponential kernel with correlation length
        $l_c=1$ is used. $M=12, p=4$ for the stochastic
        Galerkin approximation of Model II.
        $M=12, N_{mc}=10000$ for the Monte Carlo method.
        Note that $\log$ scale is used for the standard
        deviation.} }
  \end{figure}
  \begin{figure}
	\begin{centering}
      {\includegraphics[width=0.4\textwidth]{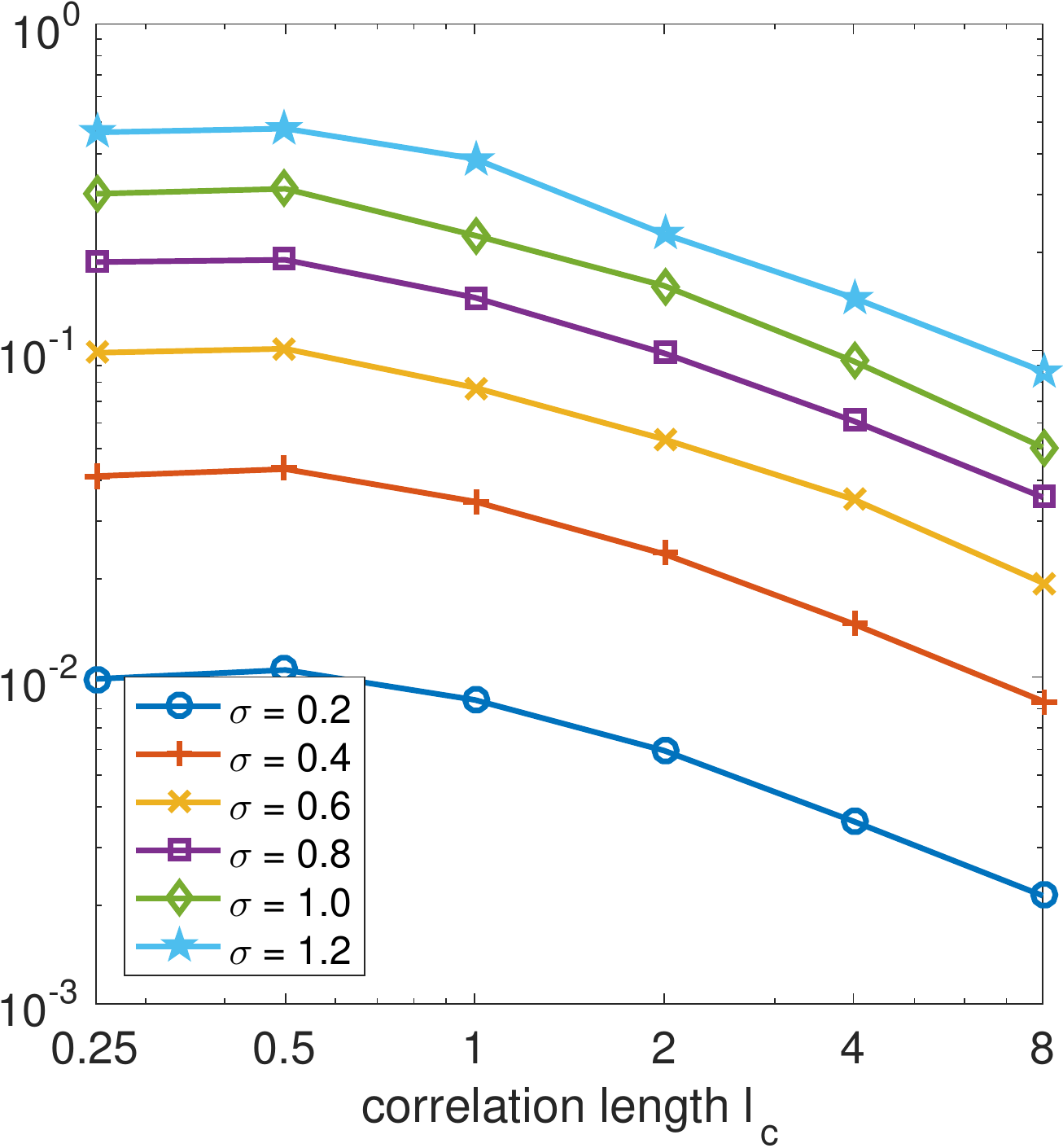}}$\quad$
 	  {\includegraphics[width=0.4\textwidth]{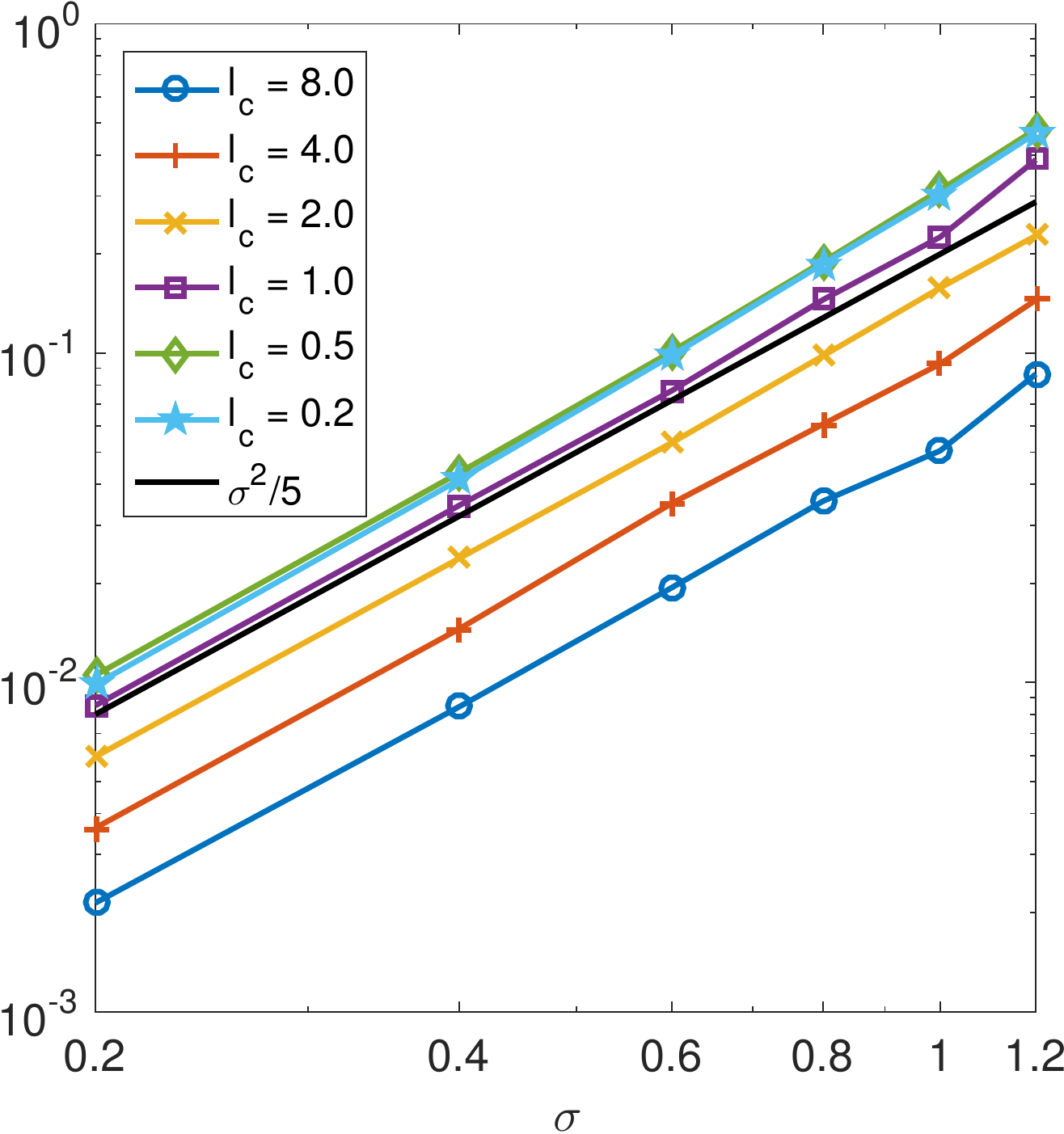}}
 	  \par
	\end{centering}
	\caption{\label{fig:IS_exp-1d_VR} \Yu{The variance
        reduction for the 1-dimensional case with
        exponential kernel having different correlation
        length and different values of $\sigma$ . The $y$-axes are
        $\|\var(\tilde{u}_{I,h})\|_{H_0^1(D)}/\|\var(u_{I,h})\|_{H_0^1(D)}$.
        $N_{mc}=10000$ samples are used for the Monte Carlo
        method.  The tolerance of K-L expansion is set to
        $3\times 10^{-2}$.  The values of $M, p$
        corresponding to the stochastic Galerkin
        approximation of Model II with
        $l_c=8, 4, 2, 1, 0.5, 0.25$ are
        $(3,6), (4,6), (7,5), (12,4), (19, 3), (27,3)$, respectively.
        Note that $\log$ scales are used for both $x$ and
        $y$ axes.}  }
  \end{figure}
  \begin{figure}
	\begin{centering}
      {\includegraphics[width=0.4\textwidth]{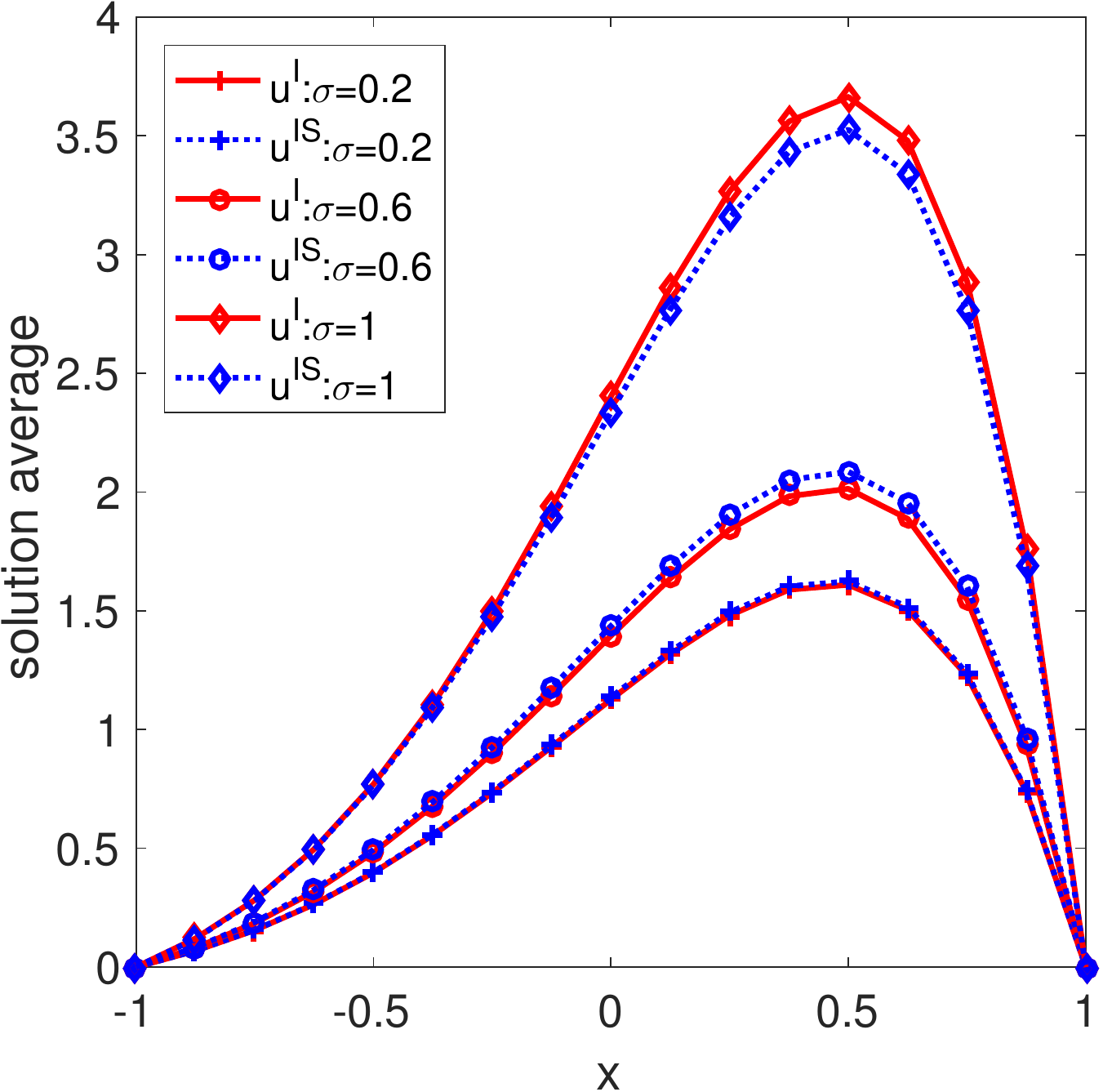}}
$\quad$
	{\includegraphics[width=0.4\textwidth]{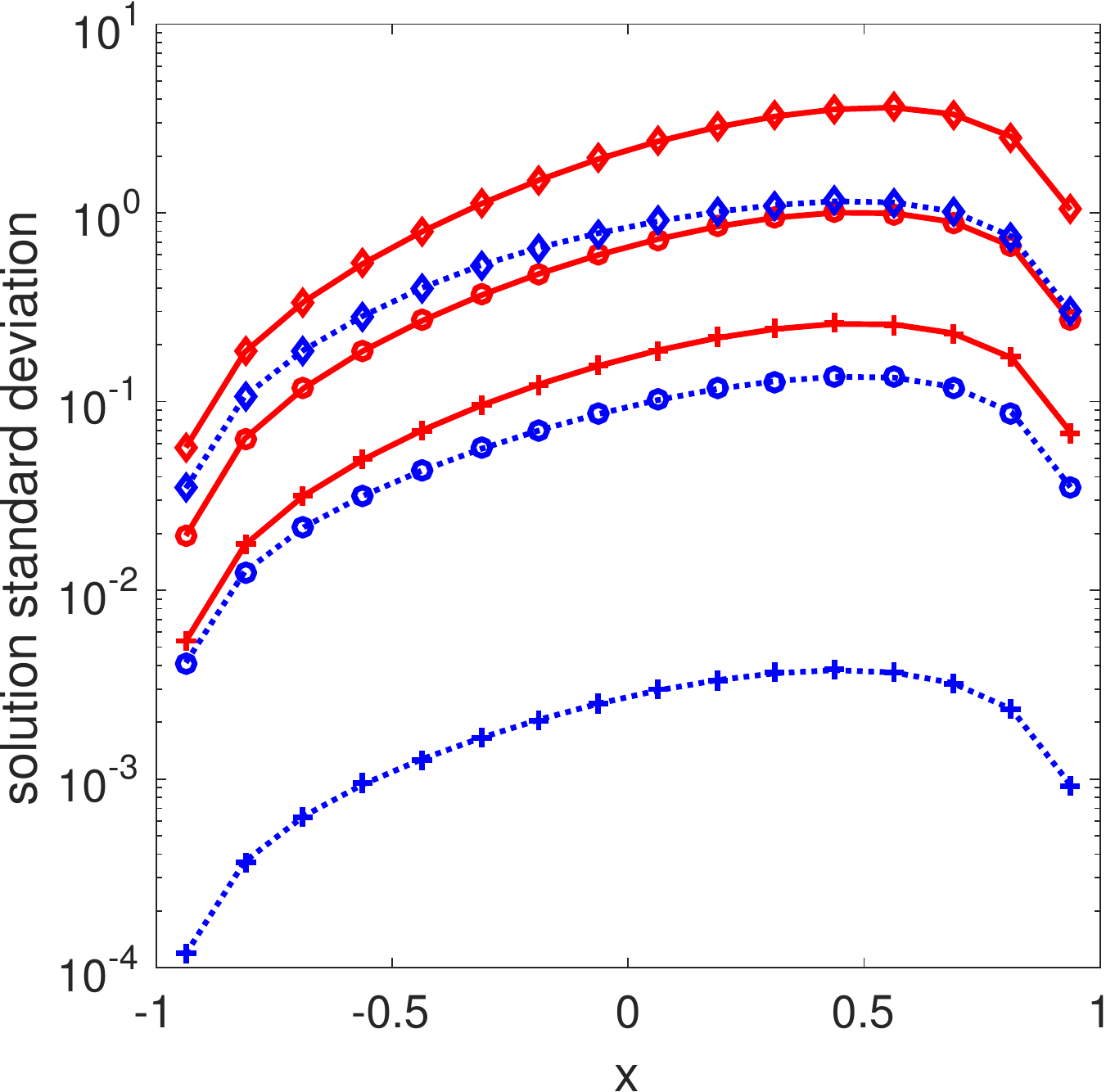}}\par
	\end{centering}
	\caption{\label{fig:IS_exp-2d} \Yu{The mean and standard
        deviation of the Monte Carlo method for model I with
        and without important sampling in 2-dimensional
        case. The exponential kernel with correlation length
        $l_c=2$ is used. $M=19, p=2$ for the stochastic
        Galerkin approximation of Model II.
        $M=19, N_{mc}=1000$ for the Monte Carlo method.
        Note that $\log$ scale is used for the standard
        deviation.}}
  \end{figure}
  \begin{figure}
	\begin{centering}
    {\includegraphics[width=0.4\textwidth]{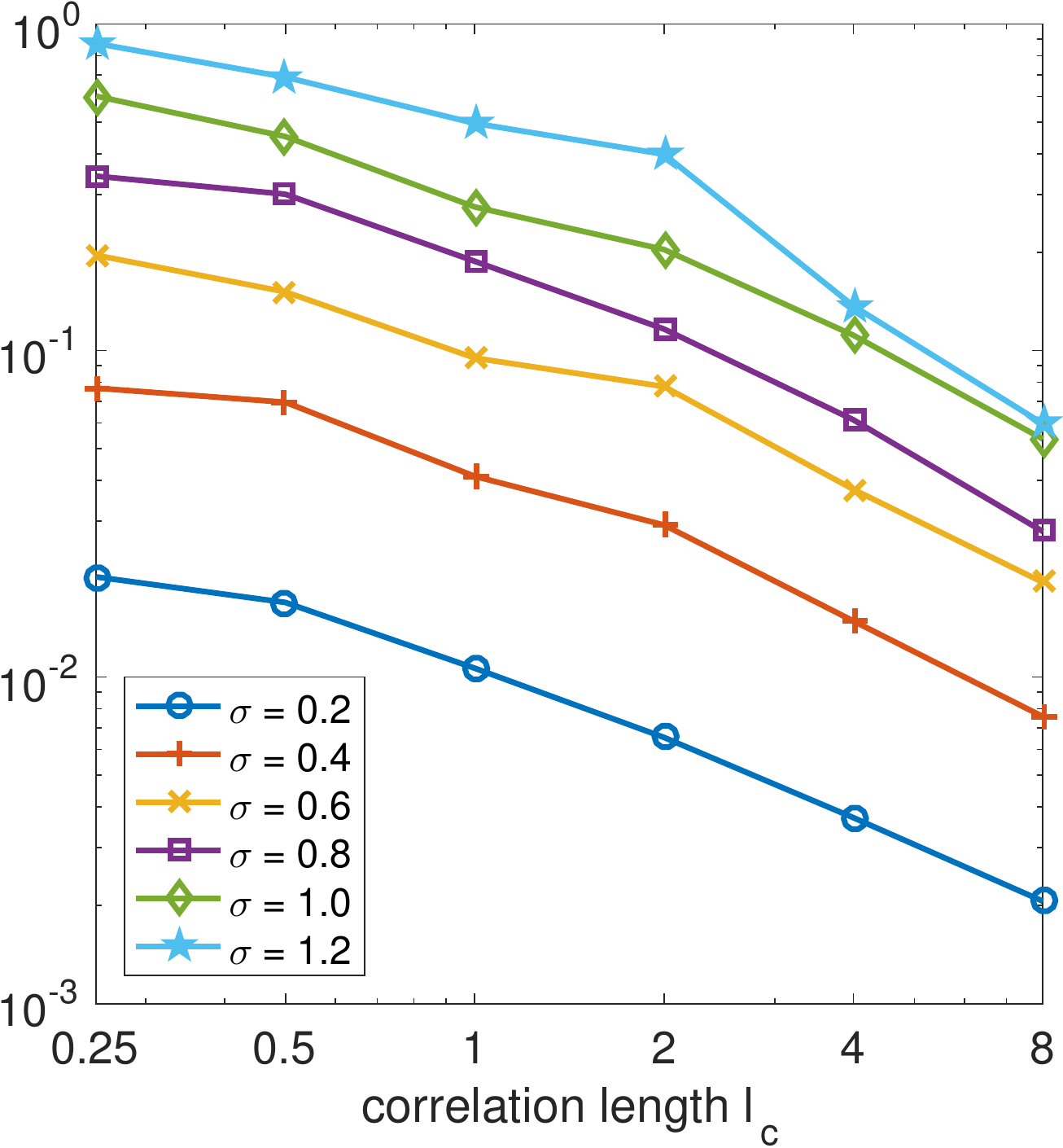}}$\quad$
    {\includegraphics[width=0.4\textwidth]{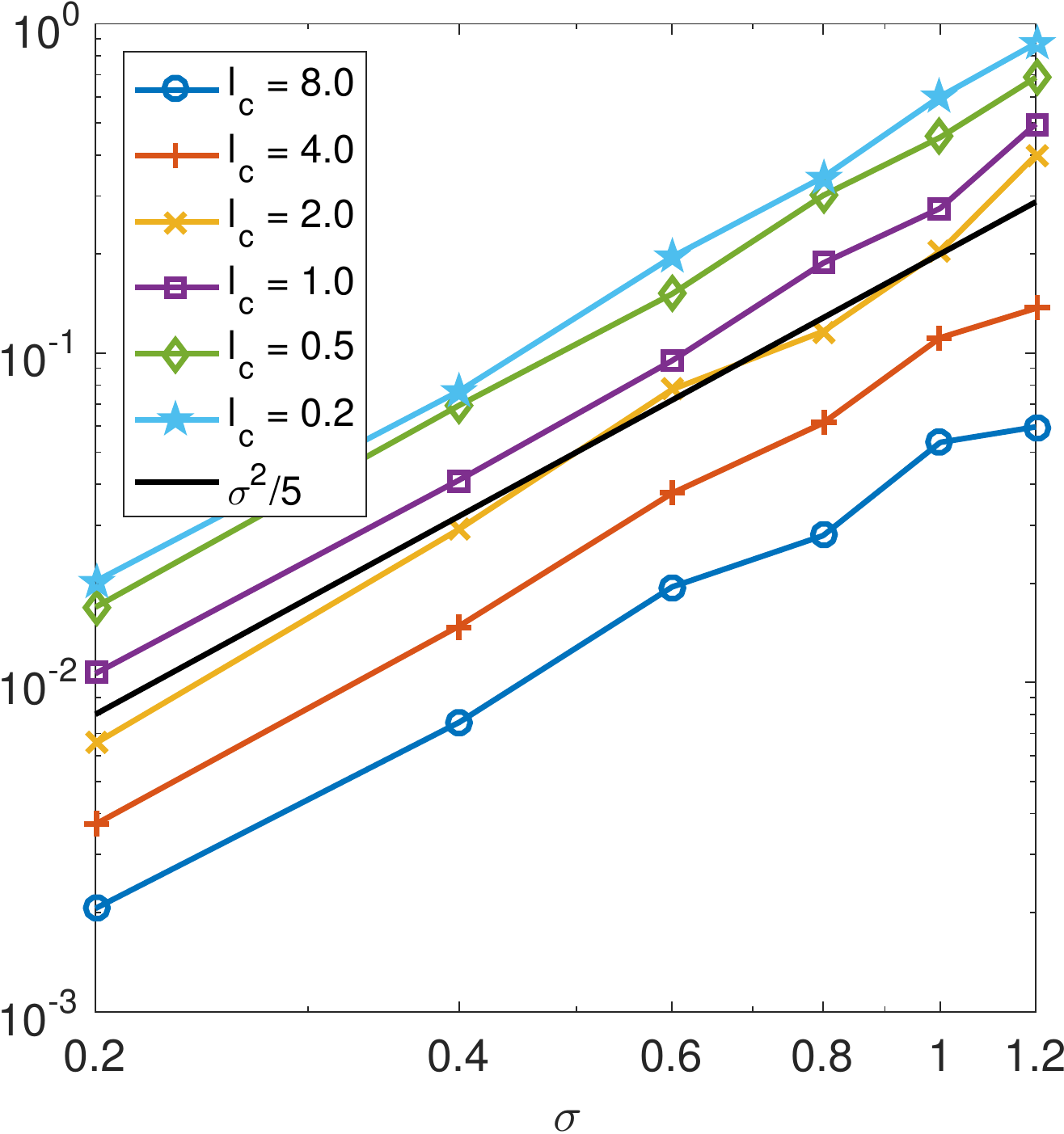}}\par
    \end{centering}
	\caption{\label{fig:IS_exp-2d_VR} \Yu{The variance
        reduction for the 2-dimensional case with
        exponential kernel having different correlation
        lengths and different values of $\sigma$. The $y$-axes are
        $\|\var(\tilde{u}_{I,h})\|_{H_0^1(D)}/\|\var(u_{I,h})\|_{H_0^1(D)}$.
        $N_{mc}=1000$ samples are used for the Monte Carlo
        method.  The tolerance of K-L expansion is set to
        $3\times 10^{-2}$.  The values of $M, p$
        corresponding to the stochastic Galerkin
        approximation of Model II with
        $l_c=8, 4, 2, 1, 0.5, 0.25$ are
        $(5,5), (11,4), (19,3), (28,3), (35, 2), (40,2)$, respectively.
        Note that $\log$ scales are used for both $x$ and
        $y$ axes.  } }
  \end{figure}

  The average and standard deviations of Monte Carlo
  solutions at line $y=0$ for model I with and without using
  model II as a control variate are given in
  \Yu{Fig. \ref{fig:IS_exp-1d} (exponential kernel in 1-d),
    and Fig \ref{fig:IS_exp-2d} (exponential kernel in
    2-d). The results for Gaussian kernel are similar but
    easier to obtain.}  It is seen that variance reduction
  is achieved for \Yu{all $\sigma$, but for small $\sigma$
    value, the reduction is significant. To numerically
    verify how the variance reduction is related to $\sigma$
    and $l_c$, we solved the two models with different
    parameters: $l_c = 0.2, 0.4, 0.6, 0.8, 1.0, 1.2$ and
    $l_c=8, 4, 2, 1, 0.5, 0.25$. The corresponding results
    for 1-dimensional and 2-dimensional case with
    exponential kernel are given in
    Fig. \ref{fig:IS_exp-1d_VR} and \ref{fig:IS_exp-2d_VR}
    respectively. The standard deviation reduction
    \eqref{eqn:IS_err_estimate2} derived from Lemma
    \ref{lem:IS_err} is clearly verified.}

  \subsection{Using $A_{\sII}$ as a preconditioner}

  The results of using model II to precondition model I is
  given in Table
  \ref{tab:prec_normal-1d},\ref{tab:prec_exp-1d} (for 1-d
  cases) and Table
  \ref{tab:prec_normal-2d},\ref{tab:prec_exp-2d} (for 2-d
  cases). We set default relaxation parameter in Richardson
  iteration to $\gamma={1}/(1+\Yu{3}\sigma^{2})$.

  For \Yu{almost all} the cases, the preconditioned
  Richardson iteration and GMRES are both better than the
  commonly-used Gauss-Seidel iteration., {especially for
    large $l_c$ or small $\sigma$.} The iteration numbers of
  Richardson method and GMRES are much smaller than
  Gauss-Seidel method, meanwhile their increases with
  respect to the standard deviation parameter $\sigma$ are
  also slower\Yu{, except for the cases with $p=1$}.  For
  large variance, the {preconditioned} GMRES method behaves
  much better than Gauss-Seidel and Richardson methods.
  \Yu{Note that we use the solution of model II as initial values for Richardson and GMRES iterations,
  so in the cases that model II is a very good approximation of model I, the corresponding iteration numbers
  are 0.}
  
  {According to our understanding of $u_{\sII}$, the worst
    scenario for the proposed preconditioners is when $l_c$
    is small and $\sigma$ is large. }  In a very few cases
  (e.g. \Yu{$l_c=0.2$ and $\sigma=0.6, 1$ in Table
    \ref{tab:prec_normal-2d}, \ref{tab:prec_exp-2d}}), the
  preconditioned Richardson iteration requires more
  iterations to converge than Gauss-Seidel, this probably
  because a first order Wiener Chaos approximation is used,
  the big approximation error together with the big modeling
  error deteriorate the performance of the preconditioning
  and the parameter $\omega$ in the Richardson method is not
  optimal.

  Based on the above observations, we advocate to use GMRES
  with model II as a preconditioner for solving the model I.
  
  \Yu{In the end, we compare our approach with some existing
    methods by solving a test problem studied in
    \cite{PU_SIAMJMAA2010}. The physical domain is set to
    $[0,1]^2$, {and} the force term $f(\bx)=1$. {The underlying Gaussian field of the log-normal coefficient $a(\bx, \omega)$} has a correlation function
    $K(\bx_1,\bx_2)=\sigma^2 r K_1(r)$, where
    $r=\| \bx_1-\bx_2 \|_2$ and $K_1$ is the modified Bessel
    function of the second kind with order one.  Set $M=5$
    in the K-L expansion, such that $97\%$ of the Gaussian
    field's total variance {is} captured.  The iteration
    numbers of Richardson and GMRES method for the
    stochastic Galerkin method of model I with model II as
    preconditioner for different $\sigma$ and $p$ are given
    in Table \ref{tab:prec_Powell2d}. From the table, we see
    that both Richardson and GMRES methods are efficient.
    As $p$ increases, the iteration numbers increase
    slowly. As $\sigma$ increases, the iteration numbers
    also increase slowly. The preconditioning effects are
    still very good for the cases with $\sigma=1$.  These
    results are very competitive comparing to the algebraic
    preconditioners studied in \cite{PU_SIAMJMAA2010} for
    this test example. }

  \begin{table}
	\begin{centering}
      \begin{tabular}{|c|c|c|c|c|c|c|c|}
        \hline 
        $l_{c}$ & $\sigma$ & $M$ & $p$ & $N_{M,p}$
        & $n_\text{GS}$ & $n_{\gamma}$ & $n_\text{GMRES}$\\
        \hline 
        20 & 0.2 & 1 & 10 & 11 & 3 & 0 & 0\\
        \hline 
        20 & 0.6 & 1 & 10 & 11 & 27 & 0 & 0\\
        \hline 
        20 & 1 & 1 & 10 & 11 & $>$100 & 22 & 5\\
        \hline 
        2 & 0.2 & 3 & 10 & 286 & 3 & 1 & 1\\
        \hline 
        2 & 0.6 & 3 & 10 & 286 & 22 & 3 & 1\\
        \hline 
        2 & 1 & 3 & 10 & 286 & $>$100 & 19 & 9\\
        \hline 
        0.2 & 0.2 & 11 & 3 & 364 & 3 & 1 & 1\\
        \hline 
        0.2 & 0.6 & 11 & 3 & 364 & 10 & 5 & 5\\
        \hline 
        0.2 & 1 & 11 & 3 & 364 & 29 & 12 & 9\\
        \hline 
      \end{tabular}\par
         \end{centering}
         \caption{\label{tab:prec_normal-1d}Preconditioning
           results of 1-dimensional problem with Gaussian
           kernel.  \Yu{$n_\text{GS}$, $n_{\gamma}$,
             $n_\text{GMRES}$ means the iteration number of
             Gauss-Seidel, Richardson and GMRES,
             respectively.  We take $\gamma=1/(1+3\sigma^2)$
             for the Richardson method.  The tolerance of
             K-L expansion is set to $2\times 10^{-3}$. The
             relative tolerance for the iteration solvers
             {is} set to $10^{-3}$.  } }
  \end{table}

  \begin{table}
	\centering{}%
	\begin{tabular}{|c|c|c|c|c|c|c|c|}
      \hline 
      $l_{c}$ & $\sigma$ & $M$ & $p$ & $N_{M,p}$
 & $n_\text{GS}$ & $n_{\gamma}$ & $n_\text{GMRES}$\\
      \hline 
      20 & 0.2 & 2 & 10 & 66 & 3 & 0 & 0\\
      \hline 
      20 & 0.6 & 2 & 10 & 66 & 24 & 2 & 1\\
      \hline 
      20 & 1 & 2 & 10 & 66 & $>100$ & 16 & 9\\
      \hline 
      2 & 0.2 & 8 & 5 & 1287 & 3 & 1 & 1\\
      \hline 
      2 & 0.6 & 8 & 5 & 1287 & 17 & 4 & 3\\
      \hline 
      2 & 1 & 8 & 5 & 1287 & $>100$ & 9 & 9\\
      \hline 
      0.2 & 0.2 & 51 & 2 & 1378 & 3 & 1 & 1\\
      \hline 
      0.2 & 0.6 & 51 & 2 & 1378 & 7 & 5 & 3\\
      \hline 
      0.2 & 1 & 51 & 2 & 1378 & 15 & 7 & 6\\
      \hline 
	\end{tabular} 
    \caption{\label{tab:prec_exp-1d}Preconditioning results
      of 1-dimensional problem with exponential kernel.
      \Yu{$n_\text{GS}$, $n_{\gamma}$, $n_\text{GMRES}$
        means the iteration number of Gauss-Seidel,
        Richardson and GMRES, respectively.  We take
        $\gamma=1/(1+3\sigma^2)$ for the Richardson method.
        The tolerance of K-L expansion is set to
        $3\times 10^{-2}$. The relative tolerance for the
        iteration solvers {is} set to $10^{-3}$.}  }
  \end{table}

  \begin{table}
	\centering{}%
	\begin{tabular}{|c|c|c|c|c|c|c|c|}
      \hline 
      $l_{c}$ & $\sigma$ & $M$ & $p$ & $N_{M,p}$
              & $n_\text{GS}$ & $n_{\gamma}$ & $n_\text{GMRES}$\\
      \hline 
      20 & 0.2 & 1 & 16 & 17 & 3 & 0 & 0\\
      \hline 
      20 & 0.6 & 1 & 16 & 17 & 25 & 0 & 0\\
      \hline 
      20 & 1 & 1 & 16 & 17 & 29 & 1 & 1\\
      \hline 
      2 & 0.2 & 4 & 5 & 126 & 3 & 0 & 0\\
      \hline 
      2 & 0.6 & 4 & 5 & 126 & 17 & 5 & 4\\
      \hline 
      2 & 1 & 4 & 5 & 126 & 48 & 14 & 7\\
      \hline 
      0.2 & 0.2 & 80 & 1 & 81 & 2 & 1 & 1\\
      \hline 
      0.2 & 0.6 & 80 & 1 & 81 & 3 & 4 & 2\\
      \hline 
      0.2 & 1 & 80 & 1 & 81 & 4 & 7 & 3\\
      \hline 
	\end{tabular}
    \caption{\label{tab:prec_normal-2d}Preconditioning
      results of 2-dimensional problem with Gaussian kernel.
      \Yu{$n_\text{GS}$, $n_{\gamma}$, $n_\text{GMRES}$
        means the iteration number of Gauss-Seidel,
        Richardson and GMRES, respectively.  We take
        $\gamma=1/(1+3\sigma^2)$ for the Richardson method.
        The tolerance of K-L expansion is set to
        $10^{-2}$. The relative tolerance for the iteration
        solvers {is} set to $10^{-3}$.}  }
  \end{table}

  \begin{table}
	\centering{}%
	\begin{tabular}{|c|c|c|c|c|c|c|c|}
      \hline 
      $l_{c}$ & $\sigma$ & $M$ & $p$ & $N_{M,p}$
 	& $n_\text{GS}$ & $n_{\gamma}$ & $n_\text{GMRES}$\\
      \hline 
      20 & 0.2 & 3 & 8 & 165 & 3 & 0 & 0\\
      \hline 
      20 & 0.6 & 3 & 8 & 165 & 12 & 1 & 1\\
      \hline 
      20 & 1 & 3 & 8 & 165 & 41 & 14 & 10\\
      \hline 
      2 & 0.2 & 28 & 2 & 435 & 3 & 1 & 1\\
      \hline 
      2 & 0.6 & 28 & 2 & 435 & 4 & 3 & 3\\
      \hline 
      2 & 1 & 28 & 2 & 435 & 10 & 9 & 4\\
      \hline 
      0.2 & 0.2 & 86 & 1 & 87 & 2 & 1 & 1\\
      \hline 
      0.2 & 0.6 & 86 & 1 & 87 & 2 & 3 & 2\\
      \hline 
      0.2 & 1 & 86 & 1 & 87 & 4 & 7 & 3\\
      \hline 
	\end{tabular}
    \caption{\label{tab:prec_exp-2d}Preconditioning results
      of 2-dimensional problem with exponential kernel.
      \Yu{$n_\text{GS}$, $n_{\gamma}$, $n_\text{GMRES}$
        means the iteration number of Gauss-Seidel,
        Richardson and GMRES, respectively.  We take
        $\gamma=1/(1+3\sigma^2)$ for the Richardson method.
        The tolerance of K-L expansion is set to
        $3\times10^{-2}$. The relative tolerance for the
        iteration solvers {is} set to $10^{-3}$.  } }
  \end{table}

\begin{table}
	\centering{}%
	\begin{tabular}{|c|c|c|c|c|c||c|c|c|c|c|}
		\hline 
		   & \multicolumn{5}{|c||}{Richardson} & \multicolumn{5}{c|}{GMRES} \\
		\hline
		$\sigma$ & $p=1$ & $p=2$ & $p=3$ & $p=4$ & $p=5$ 
		& $p=1$ & $p=2$ & $p=3$ & $p=4$ & $p=5$ \\
		\hline 
		0.2 & 5 & 6 & 5 & 6 & 6 & 3 & 3 & 4 & 4 & 4 \\
		\hline 
		0.4 & 10 & 10 & 11 & 10 & 10 & 3 & 4 & 5 & 6 & 7 \\
		\hline 
		0.6 & 14 & 16 & 17 & 18 & 19 & 4 & 5 & 6 & 7 &  8\\
		\hline 
		0.8 & 16 & 19 & 21 & 23 & 25 & 5 & 6 & 7 & 8 & 9 \\
		\hline 
		1.0 & 16 & 19 & 21 & 24 & 26 & 5 & 7 & 8 & 9 &  11\\
		\hline 
	\end{tabular}
	\caption{\label{tab:prec_Powell2d}\Yu{The iteration
        numbers of Richardson and GMRES method solving the
        2-dimensional problem with Matern-tye kernel studied
        in \cite{PU_SIAMJMAA2010}. We take
        $\gamma=1/(1+3\sigma^2)$ for the Richardson method.
        The relative tolerance for the iteration solvers {is} 
        set to $10^{-8}$.  $M=5$.  } }
\end{table}

\section{Summary}

In this work, we consider the Wick approximation of two
stochastic elliptic problems with log-normal random
coefficients, where Model II is a second order approximation
of model I with respect to $\sigma$. Model II can be used as
a precondition for model I in a stochastic Galerkin
method. The numerical results show that the preconditioned
Richardson iteration is better than commonly used
Gauss-Seidel method when $\sigma$ is small or $l_c$ is
large. Meanwhile, the former method have a parameter to
tune. The preconditioned GMRES method works very well for
all the values of $\sigma$ and $l_c$ tested using defaults
parameters. The model II can also be used as an efficient
important sampling process for model I to reduce the
variance of a Monte Carlo approach when the stochastic
dimension in a Karhunen-Lo\`{e}ve expansion is very high.

\acknowledgements

The work of X. Wan was partially supported by a NSF grant
DMS-1620026.  The work of H. Yu was partially supported by
China National Program on Key Basic Research Project
2015CB856003, NNSFC Grant 11771439 and China Science
Challenge Project TZ2018001.

\appendix

\remove{ We consider white noise defined on Hilbert space
  $\cU$. Let $\{\mfku_k\}_{k=1}^\infty$ be a complete
  orthonormal basis in $\cU$ and
  $\dotW=\{\dotW(h),\,h\in\cU\}$ a zero-mean Gaussian family
  such that
  \begin{equation}
    \label{eqn:EofW}
    \bE[\dotW(h_1)\dotW(h_2)]=(h_1,h_2)_{\cU},
    \quad\forall{h_1,h_2\in\cU}.
  \end{equation}
  The (Gaussian) white noise is defined as the formal series
  \begin{equation}
    \dotW=\sum_k\dotW(\mfku_k)\mfku_k.
  \end{equation}
  According to equation \eqref{eqn:EofW}, one can verify
  that the mapping $h\rightarrow\dotW(h)$ is linear, which
  implies that $\{\dotW(h)\}$ is a Gaussian family. Due to
  the fact that $(\mfku_i,\mfku_j)_{\cU}=\delta_{ij}$,
  $\dotW(\mfku_k) \sim\mathcal{N}(0,1)$ are independent
  normal random variables. Thus a formal series
  $\dot{W}=\sum_{k\geq1}\xi_{k}{\mfku}_{k}$ defines Gaussian
  \emph{white noise} on $\cU$, where
  $\xi_k\sim \mathcal{N}(0,1)$ are independent normal random
  variables.
 
  The smoothed white noise \cite{Holden96} can be defined as
  \begin{equation}\label{eqn:swn}
    \Wphi(\bx):=\sum_{k\geq 1}
    (\phi_{\bx},\frak{u}_k)_{\cU}\xi_k,
  \end{equation}
  where $\phi_{\bx}=\phi(\by-\bx)$ and $\phi(\by)$ is a
  prescribed function. For example, $\phi(\by)$ can be
  chosen as $\phi(\by)=\mathbb{I}_{[0,h]\times[0,h]}(\by)$,
  where $h$ is a positive number and $\mathbb{I}(\by)$ is
  the indicator function.  Obviously, $W_{\phi}(\bx)$ is a
  Gaussian random process.  

Here we list some basic properties of Hermite polynomials
and Wick product, which can be found in existing literature
(e.g. \cite{Holden96}, \cite{HuYan2009}).

The (probabilistic) Hermite polynomials are defined as
\begin{equation}
  \Hep_n(x) = (-1)^n e^{\frac{x^2}{2}} \frac{d^n}{dx^n} e^{-\frac{x^2}{2}}.
\end{equation}
$\Hep_n(x)$ are $n$th-degree polynomials that orthogonal
with respect to weight
$\frac{1}{\sqrt{2\pi}}e^{-\frac{x^2}{2}}$:
\begin{equation}
  \int_{-\infty}^\infty \Hep_m(x)\Hep_n(x) 
  \frac{1}{\sqrt{2\pi}} e^{-\frac{x^2}{2}} dx
  = n! \delta_{nm}.
\end{equation}
The values of Hermite polynomials can be evaluated using
following three term recurrence formula:
\begin{align*}
  & \Hep_0(x)=1,\qquad \Hep_1(x)=x,\\
  & \Hep_{n+1}(x)= x \Hep_n(x) - n \Hep_{n-1}(x),\quad n\ge 2.
\end{align*}
Hermite polynomials satisfy a very simple derivative
relation:
\begin{equation}
  \Hep_n'(x)= n \Hep_{n-1}(x)\quad \forall\, n\ge 0.
\end{equation}
The generating function of Hermite polynomials is given as
\begin{equation}
  e^{xt-\frac12 t^2} = \sum_{n=0}^\infty \Hep_n(x) \frac{t^n}{n!}
\end{equation}

The Wick product of a set of random variables with finite
moments is defined recursively as follows:
\[
  \left\langle \emptyset\right\rangle =1,\quad
  \frac{\partial\left\langle X_{1},\ldots,X_{k}\right\rangle
  }{\partial X_{i}}=\left\langle X_{1},\ldots,X_{i-1},
    X_{i+1},\ldots,X_{k}\right\rangle ,
\]
together with the constraint that the average is zero
\[
  \bE\left\langle X_{1},\ldots,X_{k}\right\rangle =0.
\]
It follows that
\[
  \langle X\rangle =X-\bE[X],\quad \langle
    X,Y\rangle =XY-\bE[Y]X-\bE[X]Y+2\bE[X]\bE[Y]-\bE[XY],
\]
If $X,Y$ are independent, from about formula, we know
\[
  \left\langle X,Y\right\rangle =\left\langle X\right\rangle
  \left\langle Y\right\rangle .
\]
On the other hand, if $Y=X$, we get
\[
  \left\langle X,X\right\rangle
  =X^{2}-2\bE[X]X+2\bE[X]^{2}-\bE[X^{2}].
\]
If we denote that
\[
  P_{n}(X)=X^{\wprod n}:=\underbrace{\left\langle
      X,\ldots,X\right\rangle }_{n\ Xs},
\]
then $P_{n}'(x)=nP_{n-1}(x)$, which is an Appell
sequence. \Yu{If} $X$ is normally distributed with variance $1$,
then
\begin{equation}\label{eq:Wick_powerHermite}
  X^{\wprod n}=\Hep_{n}(X),
\end{equation}
where $\Hep_{n}$ is the $n$th Hermite polynomial. We also
have
\begin{equation}\label{eq:Wick_Hermite}
  \Hep_{n}(X)\wprod \Hep_{m}(X)=\Hep_{n+m}(X).
\end{equation}
Using Taylor series, one can define the exponential function
of Wick product as
\begin{equation}\label{eq:wick_exponential}
  e^{\wprod X}:=\sum_{n=0}^{\infty}\frac{1}{n!}X^{\wprod n}.
\end{equation}
It can be readily checked that \cite{Holden96}
\begin{equation}\label{eqn:exp_diamond}
  \exp^\wprod\left[\Wphi(\bx)\right]=\exp\left[\Wphi(\bx)-\frac{1}{2}
    \sigma^2\right],
\end{equation}
and the following statistics hold
\begin{equation}
  \bE\left[\exp^\wprod\left[\Wphi(\bx)\right]\right]=1,\quad
  \var\left[\exp^\wprod\left[\Wphi(\bx)\right]\right]=
  \exp\left(\sigma^2\right)-1.
\end{equation}
We also have
\begin{equation}\label{eqn:exp_dia_inverse}
  \exp^{\wprod}\left[\Wphi(\bx)\right]\wprod\exp^{\wprod}
  \left[-\Wphi(\bx)\right]=1.
\end{equation} 
}

\remove{
  \section{Algorithm verification} This section contains
  some information for the algorithm verification.

  \subsection{Monte Carlo methods} We need to compute the
  following results:
  \begin{itemize}
  \item The comparison between MC results given by $10^6$
    realizations with the results given by algorithm 1
    through the statistics along the center line of the
    domain.
  \item The same comparison with respect to a certain norm.
  \end{itemize}

  We use the Gaussian kernel with a correlation length 0.5
  and $\lambda_1/\lambda_{10}=0.7\%$. The physical domain
  $D=[0,1]^2$ is discretized by $128\times128$ uniform
  linear finite element elements.
}




\begin{thebibliography}{-8}

\bibitem{Babuska_SIAMJNA04}
{\sc I. Babu$\breve{\mathrm{s}}$ka, R. Tempone, and G. Zouraris},
{\em Galerkin finite element approximations of stochastic elliptic
differential equations}, 
SIAM J. Numer. Anal., 42 (2004), pp.~800--825.

\bibitem{Babuska_SIAMJNA07}
{\sc I. Babu$\breve{\mathrm{s}}$ka, F. Nobile, and R. Tempone}, 
{\em A stochastic collocation method for elliptic partial differential equations with
random input data}, 
SIAM J. Numer. Anal., 45 (2007), pp.~1005--1034.

\bibitem{BN_SIAMJUQ14}
{\sc F. Bonizzoni, F. Nobile},
{\em Perturbation Analysis for the Darcy Problem with Log-Normal Permeability}, SIAM/ASA J. Uncertainty Quantification 2(2014), pp.~223--244. 

\bibitem{BNK_CMAME16}
{\sc F. Bonizzoni, F. Nobile, D. Kressner},  
{\em Tensor train approximation of moment equations for elliptic equations with lognormal coefficient}, 
Comput. Meth. Appl. Mech. Eng. 308(2016), pp.~349--376.

\bibitem{Brenner02}
{\sc S. Brenner and L. Scott}, 
{\em The Mathematical Theories of Finite Element Methods, 2nd Edition}, 
Springer, 2002.

\bibitem{Cameron_AM47}
{\sc R. Cameron and W. Martin}, 
{\em The orthogonal development of nonlinear functionals in series of Fourier-Hermite functionals}, 
Ann. Math., 48 (1947), pp.~385.

\bibitem{Charrier_SINUM12}
{\sc J.~Charrier}, 
{\em Strong and weak error estimates for elliptic partial differential equations with random coefficients}, 
SIAM J. Numer. Anal., 50(1) (2012), pp.~216--246.

\bibitem{CCDS_ESAIM13}
{\sc A. Chkifa, A. Cohen, R. DeVore, and C. Schwab}, 
{\em Sparse adaptive Taylor approximation algorithms for parametric and stochastic elliptic PDEs}, 
ESAIM: Math. Model. Numer. Anal., 47(1) (2013), pp.~253--280.

\bibitem{Ciarlet2002}
{\sc P. Ciarlet}, 
{\em The Finite Element Method for Elliptic Problems}, 
SIAM, Philadelphia, 2002.

\bibitem{Ghanem91}
{\sc R. Ghanem and P. Spanos}, 
{\em Stochastic Finite Element: A Spectral Approach}, 
Springer-Verlag, New York, 1991.

\bibitem{Gittelson_MMMAS10}
{\sc C. J. Gittelson}, 
{\em Stochastic Galerkin discretization of the log-normal 
isotropic diffusion problems}, 
Math. Models Methods in Appl. Sci., 20(2) (2010), pp.~237--263.

\bibitem{Schwab_CMAME05}
{\sc P. Frauenfelder, C. Schwab, and R. Todor}, 
{\em Finite elements for elliptic problems with stochastic coefficients}, 
Comput. Methods Appl. Mech. Engrg., 194 (2005), pp~205--228.

\bibitem{HFND_2017} 
{\sc J. Hampton, H. Fairbanks, A. Narayan and A. Doostan}, {\em Parametric/Stochastic Model Reduction: Low-Rank Representation, Non-Intrusive Bi-Fidelity Approximation, and Convergence Analysis}, arXiv:1709.03661 [math], Sep. 2017.

\bibitem{Holden96}
{\sc H. Holden, B. Oksendal, and T. Zhang}, 
{\em Stochastic Partial Differential Equations: A Modeling, White Noise Functional Approach}, 
Birkhauser, Boston, 1996.

\bibitem{HuYan2009}
{\sc Y. Hu and  J. Yan}
{\em Wick calculus for nonlinear Gaussian functionals},
Acta Math. Appl. Sin. Engl. Ser. 25 (2009), pp.~399-–414. 

\bibitem{Karnia_JSC02}
{\sc M. Jardak, C.-H. Su, and G. Karniadakis}, 
{\em Spectral polynomial chaos solutions of the stochastic advection equation}, 
J. Sci. Comput., 17 (2002), pp.~319--338.

\bibitem{KarniadakisS05}
{\sc G. Karniadakis and S. Sherwin}, 
{\em Spectral/$hp$ Element Methods for CFD}, 
2nd edition, Oxford University Press, 2005.

\bibitem{Rozovskii_SIMA09}
{\sc S. Lototsky and B. Rozovskii}, 
{\em Stochastic differential equations driven by purely spatial noise}, 
SIAM J Math. Anal., 41(4) (2009), pp.~1295--1322.

\bibitem{Landau60}
{\sc L.~D.~Landau and E.~M.~Lifshitz}, 
{\em Electrodynamics of Continuous Media}, Pergamon Press, Oxford, 1960.

\bibitem{LRW09}
{\sc S. Lototsky, B. Rozovskii, and X.Wan}, 
{Elliptic equations of higher stochastic order},  
ESAIM: Math. Modeling and Numer. Anal., 5(4) (2010), pp.~1135--1153.

\bibitem{Manouzi}
{\sc H.Manouzi and T. Theting},  
{\em Numerical analysis of the stochastic Stokes equations of Wick type}, 
(Wiley InterScience) DOI 10.1002/num.20166, 2006.

\bibitem{Matheron67}
{\sc G.~Matheron},  
{\em El\'{e}ments pour une th\'{e}orie des milieux poreux}, Masson, Paris, 1967.

\bibitem{Boris_SNS}
{\sc R.~Mikulevicius and B.~L.~Rozovskii},  
{\em On unbiased stochastic Navier-Stokes equations}, 
Probab. Theory Relat. Fields, 2011, pp.~1--48.

\bibitem{Starkloff11}
{\sc A.~Mugler and H.-J.~Starkloff},
{\em On elliptic partial differential equations with random coefficients}, 
Stud. Univ. Babes-Bolyai Math. 56(2) (2011), pp.~473--487


\bibitem{NTTT_SGA2016}
{\sc F. Nobile, L. Tamellini, F. Tesei and R. Tempone}, 
{\em An Adaptive Sparse Grid Algorithm for Elliptic PDEs with Lognormal Diffusion Coefficient}, in Sparse Grids and Applications - Stuttgart 2014, vol. 109, J. Garcke and D. Pflüger, Eds. Cham: Springer International Publishing, 2016, pp.~191--220.

\bibitem{NT_SPDE_2015}
{\sc F. Nobile and F. Tesei}, 
{\em A Multi Level Monte Carlo method with control variate for elliptic PDEs with log-normal coefficients},
Stoch PDE: Anal Comp, 3(3)  (2015) pp.~398--444.


\bibitem{Nualart06}
{\sc D. Nualart},  
{\em Malliavin Calculus and Related Topics, 2nd edition}, 
Springer, New York, 2006.

\bibitem{NR}
{\sc D. Nualart and B. Rozovskii}, 
{\em Weighted stochastic {Sobolev} spaces and bilinear {SPDE's} driven by space-time white noise}, 
J. Funct. Anal., 149 (1997), pp.~200--225.

\bibitem{Sarkis09}
{\sc J. Galvis and M. Sarkis},  
{\em Aproximating infinity-dimensional stochastic Darcy's equations without uniform ellipticity}, 
SIAM J. Numer. Anal., 47(5)(2009), 3624--3651.

\bibitem{Papanicolaou95}
{\sc G. Papanicolaou}, 
{\em Diffusion in random media}, in: J.B. Keller, D. McLaughlin
and G. Papanicolaou, eds., 
Surveys in Applied Mathematics, Plenum Press, New York, 1995, pp.~205--255.

\bibitem{PE_IMAJNA_2009}
{\sc C. E. Powell and H. C. Elman}, 
{\em Block-diagonal preconditioning for spectral stochastic finite-element systems}, 
IMA Journal of Numerical Analysis, 29(2) (2009) pp.~350--375.

\bibitem{PU_SIAMJMAA2010}
{\sc C. Powell and E. Ullmann}, 
{\em Preconditioning Stochastic Galerkin Saddle Point Systems}
SIAM J. Matrix Anal. Appl., 31(5) (2010) pp.~2813--2840.

\bibitem{Prato92}
{\sc G. Da Prato and J. Zabczyk}, 
{\em Stochastic Equations in Infinite Dimensions}, Encyclopedia Math. Appl. 44,  
Cambridge University Press, Cambridge, UK, 1992.

\bibitem{Riesz90}
{\sc F. Riesz and B. Sz.-Nagy}, 
{\em Functional Analysis}, 
Dover, New York, 1990.

\bibitem{Saad03}
{\sc Y.~Saad}, 
{\em Iterative Methods for Sparse Linear Systems}, 
2nd Ed., SIAM, 2003.

\bibitem{Schwab98}
{\sc C. Schwab}, 
{\em $p$- and $hp$- Finite Element Methods},
Oxford University Press, 1998.

\bibitem{ShenY10}
{\sc J. Shen and H. Yu}
{\em Efficient spectral sparse grid methods and applications to high-dimensional elliptic problems},
SIAM J. Sci. Comput. 32 (2010), 3228--3250.

\bibitem{SWY_JCAM2014}
{\sc J. Shen, L.-L. Wang, and H. Yu}, 
{\em Approximations by orthonormal mapped Chebyshev functions for higher-dimensional problems in unbounded domains},
J. Comput. Appl. Math., 265(2014), pp.~264--275.

\bibitem{ShenY12}
{\sc J. Shen and H. Yu}
{\em Efficient spectral sparse grid methods and applications to high-dimensional elliptic equations II: Unbounded domains},
SIAM J. Sci. Comput. 34 (2012), 1141--1164.

\bibitem{Theting_SSR2000}
{\sc T. Theting}, 
{\em Solving {Wick}-stochastic boundary value problems using a finite element method}, 
Stochastics and Stochastic Reports, 70 (2000), pp.~241--270.

\bibitem{Schwab_IMANUM07}
{\sc R. Todor, and C. Schwab}, 
{\em Convergence rates for sparse chaos approximations of elliptic
	problems with stochastic coefficients}, 
IMA J. Numer. Anal., 27(2) (2007), pp.~232--261.

\bibitem{Vage_MS98}
{\sc G. V$\overset{\circ}{\mathrm{a}}$ge}, 
{\em Variational methods for PDEs applied to stochastic partial differential equations}, 
Math. Scand., 82 (1998), pp.~113--137.

\bibitem{Venturi_2013}
{\sc D. Venturi, X. Wan, R. Mikulevicius, B. Rozovskii and G. Karniadakis}, 
{\em Wick-Malliavin approximation to nonlinear stochastic PDEs: analysis and simulations},  
Proc. R. Soc. A, 469 (2013), 20130001.

\bibitem{Wan_PNAS09}
{\sc X. Wan, B. Rozovskii and G. Karniadakis}, 
{\em A stochastic modeling methodology based on weighted {W}iener chaos and {M}alliavin calculus}, 
Proc. Natl. Acad. Sc. USA, 106 (2009), pp.~14189--14194.

\bibitem{Wan_model}
{\sc X. Wan}, 
{\em A note on stochastic elliptic models}, 
Comput. Methods Appl. Mech. Engrg., 199(45-48) (2010), pp.~2987--2995.

\bibitem{Wan_model2}
{\sc X. Wan}, 
{\em A discussion on two stochastic modeling strategies for elliptic problems},  
Comm.  Comput. Phys., 11 (2012), pp.~775--796.

\bibitem{Wan_model3}
{\sc X. Wan, B. L. Rozovskii}, 
{\em The Wick-Malliavin approximation of elliptic problems with log-normal 
random coefficients}, SIAM J. Sci. Comput., 35(5) (2013), pp.~A2370--A2392.

\end{thebibliography}

\end{document}